\newtheorem{teo}{Theorem}[subsection]
\newtheorem{thm}[teo]{Theorem}
\newtheorem{eg}[teo]{Example}
\newtheorem{lem}[teo]{Lemma}
\newtheorem{cor}[teo]{Corollary}
\newtheorem{defn}[teo]{Definition}
\newtheorem{rmk}[teo]{Remark}
\newtheorem{asmp}[teo]{Assumption}
  \newtheorem{prop}[teo]{Proposition}
\numberwithin{equation}{section}
  \newcommand{\BA}{{\mathbb {A}}} 
     \newcommand{\BF}{{\mathbb {F}}}
    \newcommand{\BG}{{\mathbb {G}}}
    \newcommand{\BM}{{\mathbb {M}}} \newcommand{\BN}{{\mathbb {N}}}
    \newcommand{\BQ}{{\mathbb {Q}}} \newcommand{\BR}{{\mathbb {R}}}
     \newcommand{\BZ}{{\mathbb {Z}}}
    \newcommand{\CA}{{\mathcal {A}}} \newcommand{\CB}{{\mathcal {B}}}
     \newcommand{\cD}{{\mathcal {D}}}
    \newcommand{\CG}{{\mathcal {G}}} 
    \newcommand{\CI}{{\mathcal {I}}} 
    \newcommand{\CM}{{\mathcal {M}}} 
    \newcommand{\CO}{{\mathcal {O}}}
    \newcommand{\CU}{{\mathcal {U}}} 
    \newcommand{\CW}{{\mathcal {W}}} \newcommand{\CX}{{\mathcal {X}}}
    \newcommand{\CY}{{\mathcal {Y}}} \newcommand{\CZ}{{\mathcal {Z}}}
    \newcommand{\fm}{{\mathfrak{m}}}
     \newcommand{\fA}{{\mathfrak{A}}} \newcommand{\fB}{{\mathfrak{B}}}
    \newcommand{\fI}{{\mathfrak{I}}} 
    \newcommand{\fM}{{\mathfrak{M}}}
     \newcommand{\fT}{{\mathfrak{T}}}
    \newcommand{\fU}{{\mathfrak{U}}} 
     \newcommand{\fX}{{\mathfrak{X}}}
     \newcommand{\fZ}{{\mathfrak{Z}}}
       \newcommand{\dbc}{{\circ\circ}}
    \newcommand{\pair}[1]{\langle {#1} \rangle}
    \newcommand{\incl}{\hookrightarrow}
    \newcommand{\bsl}{\backslash}
 \newcommand{\vep}{\varepsilon} \newcommand{\ep}{\epsilon}
  \newcommand{\vpl}{\varprojlim}
    \newcommand{\fpl}{{\flat+}}      \newcommand{\fcc}{{\flat\circ}}   
 \newcommand{\vil}{\varinjlim}
    \newcommand{\etale}{\'{e}tale~}
  \newcommand{\cycl}{{\mathrm{cycl}}}
    \newcommand{\Aut}{{\mathrm{Aut}}}
    \newcommand{\can}{{\mathrm{can}}}
    \newcommand{\Coker}{{\mathrm{Coker}}}
    \newcommand{\cyc}{{\mathrm{cyc}}}
    \newcommand{\et}{{\mathrm{et}}}
    \newcommand{\Fr}{{\mathrm{Fr}}}
    \newcommand{\Gal}{{\mathrm{Gal}}} \newcommand{\GL}{{\mathrm{GL}}}
    \newcommand{\GSp}{{\mathrm{GSp}}}
    \newcommand{\Hom}{{\mathrm{Hom}}}
    \newcommand{\pr}{{\mathrm{pr}}}    \newcommand{\perf}{{\mathrm{perf}}}
    \newcommand{\ST}{{\mathrm{ST}}}
    \renewcommand{\mod}{\ \mathrm{mod}\ }
    \newcommand{\red}{{\mathrm{red}}}
    \newcommand{\Spec}{{\mathrm{Spec}}} \newcommand{\Spf}{{\mathrm{Spf}}} \newcommand{\Spa}{{\mathrm{Spa}}}
    \newcommand{\Sym}{{\mathrm{Sym}}}
    \newcommand{\tor}{{\mathrm{tor}}}
    \newcommand{\univ}{{\mathrm{univ}}}
  \newcommand{\Zar}{{\mathrm{Zar}}}
\newcommand\supervisor[1]{\def\@supervisor{#1}}
\newcounter{elno}
\renewcommand{\cong}{\simeq}
 \author{Congling Qiu} 
\begin{document} 

\title{The Manin-Mumford conjecture and  the Tate-Voloch conjecture      for a product of   Siegel moduli spaces}
\maketitle

\tableofcontents  
  \begin{abstract}
We use perfectoid spaces associated to abelian varieties and Siegel moduli spaces to study torsion points and ordinary CM points. We reprove the Manin-Mumford conjecture, i.e. Raynaud's theorem. We also prove the Tate-Voloch conjecture for a product of Siegel moduli spaces, namely ordinary  CM points outside a closed subvariety can not be $p$-adically too close to it.
\end {abstract}
      
\section{Introduction} 

We  use the theory of  perfectoid spaces to study torsion points in abelian varieties and ordinary CM points in    Siegel moduli spaces.
 The use of perfectoid spaces   is  inspired by Xie's recent work  \cite{Xie}.
 \subsection{Tate-Voloch conjecture}
 Our main new result is  about  ordinary CM points.
 Let $p$ be a prime number, $L$ the complete maximal unramified extension of $\BQ_p$.   
Let $X $ be a product of Siegel moduli spaces  over $ L$ with arbitrary level structures. 
      \begin{thm} \label{TVcor1}    
       Let $Z$ be a closed subvariety  
of  $X_ {\bar  L}$. There exists a constant $c>0$ such that for every     ordinary CM point   $x\in X(\bar  L)$, if the   distance $d(x,Z)$ from $x$ to $Z$ satisfies 
$d(x,Z)\leq c$, then $x\in Z$.
        \end{thm}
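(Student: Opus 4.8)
The plan is to pass to the perfectoid Siegel modular variety at infinite level at $p$, to use Serre--Tate theory on its ordinary locus to realise the lifts of the ordinary CM points as the torsion points of a pro-finite-level formal torus, and thereby to reduce the theorem to the Tate--Voloch property for torsion points of $\mathbb{G}_m^{d}$ --- equivalently of an abelian variety --- which is classical and which also follows from the perfectoid arguments for abelian varieties used earlier. First some routine reductions. Treating the irreducible components of $Z$ one at a time (the distance to a reducible $Z$ is the minimum over its components, and the minimum of the corresponding constants works for $Z$), we may assume $Z$ is irreducible and $Z\subsetneq X_{\bar{\mathbb{Q}}_p}$. Arguing by contradiction, suppose there is a sequence of ordinary CM points $x_n\in X(\bar{\mathbb{Q}}_p)\setminus Z$ with $d(x_n,Z)\to0$. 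Write $X=\prod_i\mathcal{A}_{g_i,K_i}$ with $K_i=K_i^pK_{i,p}$, fix the tame levels $K_i^p$, and let $\mathcal{X}_\infty=\prod_i\mathcal{A}_{g_i,\infty}$ be the perfectoid Siegel modular variety at full level at $p$; over the locus where the universal abelian variety has ordinary reduction, $\mathcal{X}_\infty\to X^{\ad}$ is a tilde-limit of finite \'etale covers. Passing to a toroidal compactification $X^{*}\supset X$ (and its perfectoid version), the ordinary locus $X^{*,\mathrm{ord}}$ --- the open subspace of $X^{*,\ad}$ over which the universal abelian variety has ordinary reduction --- is quasi-compact, and so is $\mathcal{X}_\infty^{*,\mathrm{ord}}$; the closure $\overline{Z}$ is proper. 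If $\overline{Z}$ is disjoint from the closure of $X^{*,\mathrm{ord}}$ then $d(x_n,Z)$ is bounded below and there is nothing to prove, so assume otherwise.

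Next, lift the points. Each $x_n$ is the moduli point of a tuple of ordinary abelian varieties $A_{n,i}$; after extending scalars, $A_{n,i}[p^\infty]$ sits in a connected--\'etale exact sequence $0\to\mu_{p^\infty}^{g_i}\to A_{n,i}[p^\infty]\to(\mathbb{Q}_p/\mathbb{Z}_p)^{g_i}\to0$ with both ends Lagrangian for the Weil pairing. Choosing a symplectic trivialization of $T_pA_{n,i}$ carrying the multiplicative part onto a fixed Lagrangian gives a point $\tilde x_n\in\mathcal{X}_\infty(\bar{\mathbb{Q}}_p)$ over $x_n$, which lies in the adapted ordinary locus $\mathcal{U}\subset\mathcal{X}_\infty^{*,\mathrm{ord}}$, namely the part of the ordinary locus where the level structure at $p$ is adapted to the canonical (multiplicative) subgroup; this $\mathcal{U}$ is a quasi-compact perfectoid space (it maps to a fixed $\mathbb{Q}_p$-rational point under the Hodge--Tate period map, since for an ordinary abelian variety the Hodge--Tate filtration agrees with the connected--\'etale filtration). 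Pulling $Z$ back gives a closed subset $\mathcal{Z}\subset\mathcal{X}_\infty^{*}$; since over $X^{*,\mathrm{ord}}$ the map $\mathcal{X}_\infty^{*,\mathrm{ord}}\to X^{*,\mathrm{ord}}$ is a tilde-limit of finite \'etale covers on a finite cover by affinoid perfectoid subdomains, and since all the points in question stay in a quasi-compact region, $d(x_n,Z)\to0$ forces $d(\tilde x_n,\mathcal{Z})\to0$ up to uniformly bounded distortion of the metric.

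Finally, invoke Serre--Tate theory at infinite level: it identifies $\mathcal{U}$ --- after passing to a cover by finitely many affinoid perfectoid opens, which exists because $\mathcal{U}$ is quasi-compact --- with pieces of products $\mathbb{T}\times S$, where $\mathbb{T}=\varprojlim_{[p]}\widehat{\mathbb{G}}_m^{\,d}$ (with $d=\dim X=\sum_i g_i(g_i+1)/2$) is the perfectoid cover of the Serre--Tate formal torus and $S$ is a profinite set of Igusa-type level trivializations. Under this identification the lifted ordinary CM points $\tilde x_n$ are precisely the points whose $\mathbb{T}$-coordinate is a torsion point --- a compatible system of $p$-power roots of unity, since the Serre--Tate coordinate of an ordinary CM abelian variety is a root of unity in $1+\mathfrak{m}_{\mathbb{C}_p}$, hence a $p$-power root of unity --- and whose $S$-coordinate may be taken fixed and integral. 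On each chart $\mathcal{Z}$ pulls back to a closed analytic subset descending from the single subvariety $Z$, hence, in suitable coordinates, from a closed subvariety $W\subset\widehat{\mathbb{G}}_m^{\,d}$; a tuple of $p$-power roots of unity lying within a sufficiently small distance of $W$ must lie in $W$, by the Tate--Voloch property for torsion points of $\widehat{\mathbb{G}}_m^{\,d}$ (obtained by restriction along $\widehat{\mathbb{G}}_m^{\,d}\hookrightarrow\mathbb{G}_m^{d,\mathrm{an}}$ from the classical statement for the torus), and the requisite constant is uniform because there are finitely many charts and, within each, the relevant family of $W$'s is algebraic. Hence $\tilde x_n\in\mathcal{Z}$ for $n\gg0$, i.e. $x_n\in Z$ --- a contradiction.

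I expect the main obstacle to be the structural statement used in the last paragraph: globalizing Serre--Tate theory at infinite level at $p$, for \emph{arbitrary} level at $p$ and over a toroidal compactification, so as to identify the ordinary locus of the perfectoid Siegel modular variety with a perfectoid Serre--Tate torus bundle over an Igusa-type tower, compatibly with the $p$-adic metric, so that the ordinary CM points become literally the torsion sections. Subsidiary technical points are the construction of the perfectoid toroidal compactification at $p$-level, the verification that the pro-finite-\'etale cover $\mathcal{X}_\infty\to X$ distorts $p$-adic distances by at most a bounded factor over the quasi-compact ordinary locus, and the uniformity of the Tate--Voloch constant over an algebraic family of subvarieties of tori; once the geometry is set up, the adapted lift and the final contradiction are comparatively soft.
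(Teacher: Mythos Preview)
Your overall architecture---pass to a perfectoid cover of the ordinary locus, use Serre--Tate theory to convert ordinary CM points into torsion points of a formal torus, and then invoke a Tate--Voloch statement---matches the paper's. But the last step hides the real difficulty, and your sketch does not overcome it.

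The Serre--Tate torus is attached to a \emph{single} ordinary point of the special fiber. Different CM points $x_n$ reduce to different $\bar x_n\in X_o(0)(k)$, so in your chart $\mathbb{T}\times S$ the pullback $\mathcal{Z}$ is not of the form $W\times S$ for one $W\subset\widehat{\mathbb{G}}_m^{\,d}$: over each $s$ you see the formal germ of $Z$ at $s$, an a priori arbitrary formal (not algebraic) subscheme $W_s$. Classical Tate--Voloch for a fixed subvariety of a torus gives no uniform constant over such a family---already for $W_t=\{T_1=t\}\subset\widehat{\mathbb{G}}_m$ the distance from the identity to $W_t$ tends to $0$ as $t\to 1$---so your sentence ``the requisite constant is uniform because \ldots\ the relevant family of $W$'s is algebraic'' is precisely the assertion that needs a proof, and it does not follow from the classical statement. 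This, not the globalization of Serre--Tate theory (which Chai already supplies) nor metric comparison along a pro-\'etale tower, is where the work lies.

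The paper meets this head-on and by a genuinely different route than an appeal to Scanlon's theorem. Chai's global Serre--Tate coordinates package the whole family as a single element $f\in B[[T_1,\dots,T_l]]$ with $B$ a Noetherian $k$-algebra (an \'etale chart of $X_o(0)$), and Proposition~\ref{7.2.1'}/Corollary~\ref{7.2.1} establish a \emph{family} Tate--Voloch in characteristic $p$: if $z_n\in(\Spec B)(k)$ and $x_n=t^{a_n}$ with $a_n\to 0$, then $\|f_{z_n}(x_n)\|$ cannot tend to $0$ unless it vanishes along a subsequence. The proof is an induction on $l$ exploiting Noetherianity of $B$ and of $B[[T]]$, not a reduction to a single torus. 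The link between characteristic $0$ and this characteristic-$p$ statement is made not on the full-level tower $\mathcal{X}_\infty$ but on the anticanonical tower $\varprojlim_{\Fr^{\can}}\fX(0)$, via tilting and a refinement of Scholze's approximation lemma (Lemma~\ref{improveCorollary 6.7. (1)}, Lemma~\ref{g=0'}). A further ingredient you did not anticipate is an induction on $\dim\fZ$, together with the action of $\Fr^{\can}$ on Serre--Tate orders, needed to arrange that the torsion orders go to infinity in every coordinate before Corollary~\ref{7.2.1} can be applied.
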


        The distance $d(x,Z)$  is  defined as follows.   
           Let    $\|\cdot\|$ be a $p$-adic norm  on $\bar  L$.  
   Let $\fX$ be  an integral model  of $X$ over $\CO_{\bar  L}$. 
                           Let $\{U_1,...,U_n\}$ be a finite  open  cover of $ \fX $ by  affine schemes flat over  $\CO_{\bar  L}$.   
                             Define  $d(x,Z)$  to be the supremum of  $\|f(x)\|$'s 
                             where $  U_i$ contains $x$ and $ f\in \CO_\fX(U_i)$ vanishing on $Z\bigcap U_i$.
                  The definition of $d(x,Z)$ depends on the choices of the  integral model and  the cover. However, the  truth of Theorem \ref{TVcor1} does not depend on these choices, see \ref{The distance function}.                 Moreover,     we show  that Theorem \ref{TVcor1} holds for formal subschemes of $\fX$ (with maximal level at $p$), see Theorem \ref{TVcor1aff}.
     And for CM points which are canonical liftings, we prove an ``almost effective" version, see Theorem \ref{samel}.
                  
                   It is clear that     the same
        statement in Theorem \ref{TVcor1} is  true replacing $X_{\bar L}$ by a closed subvariety.  In particular, 
                   Theorem \ref{TVcor1}  is in fact equivalent the same statement  for $X$ being a single Siegel moduli space, by embedding a product of Siegel moduli spaces  into a larger   Siegel moduli space.

 \begin{rmk}  
 
 (1) 
  For  a power of the modular curve  without level structure,  Theorem \ref{TVcor1}    was proved by Habegger  \cite{Hab} by  a different method.      However,  Habegger's proof relies on a result of Pila \cite{Pil} (see also \cite[Theorem 8]{Hab}) concerning  Zariski closure of a Hecke orbit.  
As far as we know, it is not available for  Siegel moduli spaces yet.    
   Moreover, Habegger's  method seems  not applicable to formal schemes.
   
   
   (2)   Habegger   \cite{Hab} also showed that  the ordinary  condition  is necessary. 
   
 (3)  The original Tate-Voloch conjecture  \cite{TV}   states that in a semi-abelian variety,    torsion points outside a closed subvariety can not be $p$-adically too close to it.
This conjecture  was proved by Scanlon   \cite{Sca0}   \cite{Sca} when the semi-abelian variety is defined over $\bar \BQ_p$.  
Xie \cite{Xie} proved   dynamic  analogs of  Tate-Voloch conjecture for projective spaces.

\end{rmk}
  \subsubsection{Idea  of the proof of Theorem \ref{TVcor1}} 
 It is not hard to reduce  Theorem \ref{TVcor1} to the case that $X$ has maximal level at $p$,  see Lemma \ref{T'T}. We sketch   the proof of Theorem \ref{TVcor1} in this case.     Relative to  the  canonical lifting of an ordinary point $x$ in the reduction of $X$, ordinary CM points in $X$ with   reduction $x$ are 
  like $p$-primary roots of unity relative to 1 in the open unit disc   around 1 (see Proposition \ref{dJN3.2}). This is the  Serre-Tate theory. If we only consider one such disc,   Theorem \ref{TVcor1}   follows from a result of Serban \cite{Ser}. 
   In general,
     we  need to  study all  infinitely   
  many Serre-Tate  deformation spaces together. In characteristic $p$, this can be achieved by 
Chai's global Serre-Tate  theory   \cite{Cha} (see \ref{GST}). 
To prove Theorem \ref{TVcor1}, we at first prove a Tate-Voloch type result in a family  characteristic $p$   (see  \ref{pf{TVcor1}}).   
   Then  we use the ordinary perfectoid Siegel space associated to  $ X$  and the perfectoid universal covers of  
   Serre-Tate  deformation spaces
   to translate this result to the desired Theorem \ref{TVcor1}.

 

      \subsubsection{Possible generalizations} 
       For   Shimura varieties  of Hodge type, the ordinary locus in the usual sense could be empty.
In this case, we consider  the notion of  $\mu$-ordinariness (see \cite{Wed}).   
        Then following our strategy, we need three ingredients. 
      At first, a theory of
Serre-Tate coordinates for $\mu$-ordinary CM points. 
For Shimura varieties of Hodge type,   see \cite{Hon} and  \cite{SZ}.
 Secondly, a  
global theory of Serre-Tate coordinates   in characteristic $p$.    For Shimura varieties of PEL type, such results  should be known to experts.
 Thirdly,  $\mu$-ordinary  perfectoid   Shimura varieties.    Following  \cite{Sch13}, certain perfectoid   Shimura varieties  of abelian type are constructed in \cite{She}.
   For 
universal abelian varieties over  Shimura varieties of PEL type,  
we expect  a Tate-Voloch type result for torsion points in   fibers over $\mu$-ordinary CM points.  
Still, we need analogs of the above three  ingredients.

 \subsection{Manin-Mumford conjecture} 
For torsion points in   abelian varieties, we  reprove Raynaud's theorem   \cite{Ray83}, which is also known as   the Manin-Mumford conjecture.
 \begin{thm} [Raynaud   \cite{Ray83}] \label{MM}  Let $F$ be a number field.
 Let $A$ be an  abelian variety over  $F$  and $V $   a closed subvariety of $A$.  If  
  $V$ contains a dense subset of torsion points of $A$, then $V$ is the translate  of an abelian subvariety of $A$ by a torsion point.
\end{thm}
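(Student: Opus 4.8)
The plan is to follow the strategy behind Theorem~\ref{TVcor1}: replace $A$ by a perfectoid cover onto which all torsion points of $A$ lift, turn the Zariski-density of the torsion on $V$ into a rigidity statement that can be proved in characteristic $p$, and then descend. By the standard d\'evissage — translating $V$ by a torsion point it contains so that $0\in V$, replacing $A$ by $A/\Stab_A(V)^{\circ}$ and $V$ by its image (still geometrically irreducible with Zariski-dense torsion), and inducting on $\dim A$ — we reduce to the case that $V$ is geometrically irreducible, $0\in V$, and $\Stab_A(V)$ is finite. Since an abelian subvariety is its own stabilizer, Theorem~\ref{MM} is then equivalent to the statement $\dim V=0$; assume for contradiction that $\dim V\ge 1$. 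Enlarging $F$, fix a prime $\mathfrak p\mid p$ of $F$ at which $A$ has good reduction, base change to $\BC_p$, and let $\CA$ over $\CO_{\BC_p}$ be the abelian scheme extending $A_{\BC_p}$, with special fibre $\bar\CA$ over $\bar\BF_p$.

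Form the tower $\cdots\xrightarrow{[p]}A_{\BC_p}^{\an}\xrightarrow{[p]}A_{\BC_p}^{\an}$ and set $\widetilde A:=\varprojlim_{[p]}A_{\BC_p}^{\an}$; this is a perfectoid space over $\BC_p$ — the ``perfectoid abelian variety'' attached to $A$, in the same spirit as the ordinary perfectoid Siegel space — on which multiplication by $p$ is an automorphism $\varphi$ which under tilting corresponds to a lift of the Frobenius of $\widetilde A^{\flat}$ over $\BC_p^{\flat}$. Adjoining the prime-to-$p$ levels (finite \'etale, as $[m]$ is \'etale for $p\nmid m$) keeps the space perfectoid, so $\widehat A:=\varprojlim_{[n]}A_{\BC_p}^{\an}$ is again perfectoid and every torsion point of $A$ lifts into it: a point of order $N$ lifts into $\tfrac1N L/L$, where $L:=\varprojlim_nA[n]$ is the adelic Tate module. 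When $\bar\CA$ is ordinary the formal part of $\widehat A$ is $\varprojlim_{[p]}\widehat{\BG}_m^{\,g}$ and the lifted torsion points are literally $p$-adic roots of unity — the exact analogue of the roots-of-unity picture behind Theorem~\ref{TVcor1} — while in general one works with the full universal cover. Pulling $V$ back gives a closed subset $\widetilde V\subseteq\widehat A$ containing a Zariski-dense set of these lifted torsion points.

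As for Theorem~\ref{TVcor1}, the crux is now a rigidity statement for the dynamics of $\varphi$ seen in characteristic $p$. The lifted torsion points are preperiodic for $\varphi$ — a $p$-power torsion point is carried by finitely many applications of $\varphi=[p]$ to the lift of a prime-to-$p$ torsion point, on which $\varphi$ acts with finite orbit — so $\widetilde V$ is a closed subset of (a suitable integral model of) $\widehat A$ containing a Zariski-dense set of $\varphi$-preperiodic torsion points and compatible with the whole tower. Using Chai's global Serre--Tate / foliation machinery \cite{Cha} together with the dynamical methods of \cite{Xie}, one shows that such a configuration can only arise from the preimage of a torsion translate of an abelian subvariety of $\bar\CA$. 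Transporting this conclusion back down the perfectoid tower forces $V$ itself to be a torsion translate of an abelian subvariety of $A$; since $\Stab_A(V)$ is finite and $0\in V$, this gives $V=\{0\}$, contradicting $\dim V\ge 1$. Hence $\dim V=0$, which is Theorem~\ref{MM}. (The global input can alternatively be supplied classically, via the Galois action on the torsion, as in Raynaud's original argument \cite{Ray83}.)

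The main obstacle is precisely this characteristic-$p$ rigidity statement, together with the faithfulness of the perfectoid translation: one must show that Zariski-density of torsion on $\widetilde V$ genuinely descends to the density statement needed after reduction modulo $p$, and that the $\varphi$-dynamics and the compatibility with the tower suffice to pin down $\widetilde V$. This is where the perfectoid structure and a Chai-type global deformation argument have to carry the weight, and it is the step most likely to require genuinely new input.
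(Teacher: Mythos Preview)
Your proposal is not a proof: the central step --- the ``characteristic-$p$ rigidity statement'' that a closed $\widetilde V$ containing a Zariski-dense set of $\varphi$-preperiodic torsion points must come from a torsion coset --- is asserted but not argued, and you yourself flag it as ``the step most likely to require genuinely new input.'' That step is essentially the content of the theorem, so what remains is a reduction framework with the heart missing. Two further problems: Chai's global Serre--Tate theory concerns the ordinary locus of the Siegel moduli space and is the engine behind Theorem~\ref{TVcor1}, not Theorem~\ref{MM}; an individual abelian variety has no analogous canonical foliation to exploit. And naive Manin--Mumford over $\bar\BF_p$ is false --- every $\bar\BF_p$-point is torsion --- so ``reduce to characteristic $p$ and classify torsion-dense subvarieties there'' cannot work in the form stated.

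The paper's argument is quite different and never attempts such a classification. After reducing (via \cite{PZ}) to the case where $V$ contains no positive-dimensional torsion coset, the proof is an iterative bootstrap alternating between two primes. At a prime $p$ of good reduction one forms $\CA^\perf=\varprojlim_{[p]}\CA$, tilts, and uses the approximation Proposition~\ref{techprop2} to show that liftings of the Zariski closures $\Lambda_n^\Zar\subset A_k$ of reduced torsion are $p$-adically close to $V$. Scanlon's Tate--Voloch result for prime-to-$p$ torsion (Lemma~\ref{Scalem}) then forces those lifted prime-to-$p$ torsion points into $V$ itself, and Poonen's Theorem~\ref{BT} guarantees enough of them that $\pr_l\big(Z^\Zar(W)_{p'\text{-}\tor}\big)$ contains a translate of a free $\BQ_l/\BZ_l$-module of rank $\ge 2$ for every $l\ne p$ (Proposition~\ref{mainprop}(1)). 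Now switch to a second prime $l$ of good reduction and rerun the machine with this rank-$2$ input: Proposition~\ref{mainprop}(2) yields rank $\ge 4$ at every prime $\ne l$. Iterating gives unbounded rank, contradicting $\dim A<\infty$. The perfectoid cover is used only as a transport mechanism --- moving Zariski closures from characteristic $p$ back to characteristic $0$ with controlled $p$-adic error --- and the genuine arithmetic input is Poonen $+$ Scanlon $+$ Raynaud's Lemma~\ref{finalestimate}, none of which appear in your outline.
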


   \subsubsection{Idea  of the proof  of Theorem \ref{MM}}\label{1.1}

   We simply consider the case when $V$ does not contain any translate of a nontrivial abelian subvariety.
     Suppose that $A$ has good reduction at a place of $F$ unramified over a prime number $p$. 
     Let $[p]:A\to A$ be the morphism multiplication by $p$.  Let $\Lambda _n$ be  a suitable set of  reductions of   torsions in $[p^n]^{-1}(V)$, and $\Lambda _n^\Zar$ 
  its Zariski closure in  the base change to $\bar\BF_p$ of the  reduction of $A$. 
Use the $p$-adic perfectoid universal cover of  $A$ to lift    $\Lambda _n^\Zar$  to  $A $. 
A variant of Scholze's approximation lemma \cite{Sch12} shows that as $n$ get larger, the liftings are closer to $V$      (see Proposition \ref{techprop2}).  A  result of  Scanlon \cite{Sca0}   on  the Tate-Voloch conjecture for prime-to-$p$ torsions implies that the  prime-to-$p$ torsions  of these points are in $V$ for $n$ large enough
  (see Proposition \ref{techprop}). 
   Assume  that $\Lambda _n$ is infinite and we deduce a contradiction as follows.
   A result of   Poonen \cite{Poo} (see Theorem \ref{BT}) shows that the size of the set of prime-to-$p$ torsions in
$\Lambda _n^\Zar$ is not small.  
Then the liftings give   a   lower bound on the size of the set of prime-to-$p$ torsions in $V$  (see Proposition \ref{mainprop}).
Now consider the $l$-adic perfectoid space associated to $A$. By the same approach, we can repeatedly improve
 such     lower bounds. Finally
 we get a contradiction as $A$ is of finite dimensional.

\begin{rmk} 
  
  The proofs of Poonen's  result   and Scanlon's  result  are   independent of Theorem \ref{MM}.

           \end{rmk} 

         \subsection{Organization of the Paper}   
         The   preliminaries on adic spaces and perfectiod spaces are given in Section \ref{adic spaces and perfectiod spaces}. 
               We introduce the  perfectoid universal cover of an abelian scheme in  \ref{subThe Perfectoid universal cover of a formal abelian scheme}. The reader may skip these materials  and only come back for references.
We set up notations for 
       the proof of  Theorem \ref{MM} in \ref{Tilting and reduction}, then  prove Theorem \ref{MM} 
        in Section \ref{proof}. 
We introduce the   ordinary perfectoid Siegel space   and set up notations  
 for 
       the proof of  Theorem \ref{TVcor1} in Section \ref{The ordinary perfectoid Siegel space and Serre-Tate theory}. Then we prove Theorem \ref {TVcor1}  in Section \ref{pf{TVcor}}. 
        \subsection*{Acknowledgements}
       The author would like to thank Ye Tian and  Shouwu Zhang  for encouraging  him to study the theory perfectoid spaces,   and Shouwu Zhang for introducing him the work of Xie.   The author would also like to thank Ziyang Gao for carefully reading  versions of this work and giving useful suggestions,  as well as Vlad Serban, Xu Shen, Yunqing Tang and  Daxin Xu for their help. The author also thanks the anonymous referee for very careful reading and  helpful comments on the article.
         The author is grateful to the  Institute des Hautes \'Etudes Scientifiques  
       and  the Institutes for Advanced Studies at Tsinghua University 
       for their hospitality and support during the  preparation of part of this work. 
       Part of the result of this paper was reported in  the ``S\'eminaire Mathjeunes" at Paris in October 2016.
       The author would like to thank the organizers for the invitation.     
        
          \section{Adic spaces and perfectiod spaces}\label{adic spaces and perfectiod spaces}
 We briefly recall the theory of adic spaces due to Huber \cite{Hub}\cite{Hub93}\cite{Hub94}\cite{Hub96}, and   the generalization by Scholze-Weinstein \cite{SW}. Then we define tube neighborhoods in adic spaces  and  distance functions. Finally we recall the theory of perfectoid spaces of Scholze \cite{Sch12} and an approximation lemma due to Scholze.
 
    Let $K$ be a 
 non-archimedean field, i.e. a complete nondiscrete topological field  whose topology is inudced by a non-archimedean norm   $\|\cdot\|_K$ ($\|\cdot\|$ for short). Define
 $K^\circ=\{x\in K:\|x\|\leq 1\}$,   $K^\dbc=\{x\in K:\|x\|< 1\}$.        
    Let $\varpi\in K^\dbc-\{0\}$.

\subsection{Adic generic fibers of certain formal schemes   }\label{ADIC SPACES ASSOCIATED TO SCHEMES}
 
 \subsubsection{Adic spaces}

 Let $R$ be a complete Tate $K$-algebra, i.e. a complete topological $K$-algebra with a subring $R_0\subset R$ such that $\{aR_0:a\in K^\times\}$ forms a basis of open neighborhoods of 0. A subset of $R$ is called bounded if it is contained in a certain $aR_0$.
  Let $ R^\circ$ be the subring of power bounded elements, i.e. $x\in R^\circ$ if and only if the set of all powers of $x$ form a bounded subset of $R$.
 Let   $R^+\subset R^\circ$ be  an open integrally closed subring. Such a pair $ (R, R^+)$   is called   an affinoid $K$-algebra.
 Let $\Spa(R, R^+)$     
 be  the topological space  whose underlying set is the set of   equivalent classes  of continuous valuations  $|\cdot(x)|$ on $R$  such that     $|f(x)|\leq 1$  for every
  $f\in R^+$ and  
  topology  is generated by the 
  subsets of the form $$U(\frac{f_1,...,f_n}{g}):=\{x\in \Spa(R, R^+): \forall i, |f_i(x)|\leq |g(x)|\}$$ such that $(f_1,...,f_n)=R$. 
There is a natural presheaf    on $\Spa(R, R^+)$ (see \cite[p 519]{Hub94}).
    If this preasheaf is a sheaf, then the affinoid $K$-algebra  $(R,R^+)$ is called sheafy, and $\Spa(R,R^+)$ is called an affinoid adic space over $K$.
   
   \begin{asmp}\label{norm} If $K^\circ\subset R^+$, for every $x\in \Spa(R,R^+)$,
 we always choose a representative $|\cdot(x)|$ in the  equivalence class  of $x$  such that  
   $|f(x)|=\|f\|_K$  for every $f\in K$.  
\end{asmp}
  
 Define a category $(V)$ as in \cite[Definition 2.7]{Sch12}. Objects in $(V)$ are triples 
$ (\CX, \CO_\CX , \{|\cdot(x)|:x\in \CX\}) $ where  $(\CX,\CO_\CX)$ is a locally ringed 
topological space  whose  structure sheaf is a sheaf of complete topological $K$-algebras, and  $|\cdot(x)|$ is an equivalence class of continuous valuations  on  the stalk of $\CO_\CX$ at  $x$.  Morphisms  in $(V)$ are   morphisms of locally ringed topological spaces which are continuous $K$-algebra morphisms on  the structure sheaves, and compatible with the valuations on  the stalks in the obvious sense. 

 \begin{defn}  \label{adicdef}  
  An adic space $\CX$ over $K$ is an
  object   in $(V)$ which is  locally on $\CX$ an affinoid adic space over $K$.
        An adic space over $\Spa(K,K^\circ)$  is 
  an adic space over $K$ with a   morphism to $\Spa(K,K^\circ)$.   
   A morphism between  two  adic spaces  over $\Spa(K,K^\circ)$ is a morphism in $(V)$ compatible  the   morphisms to  $\Spa(K,K^\circ)$. 
    The set of  morphisms  $\Spa(K, K^\circ)\to \CX$    is denoted by $\CX(K,K^\circ)$.

   \end{defn}
 
 There is a natural inclusion 
$\CX(K,K^\circ)\incl \CX$
by mapping a morphism   $\Spa(K, K^\circ)\to \CX$ to its image. 
  We  always identify  $\CX(K,K^\circ)$ as a subset of $ \CX$ by this inclusion.

 \subsubsection{Adic generic fibers of certain formal schemes}

  A Tate $K$-algebra $R$  is called   of topologically finite type (tft for short) if $R$ is a quotient of $K   \langle T_1, T_2 ,..., T_n \rangle $. In particular, it is equipped with the $\varpi$-adic topology. Similarly define $K^\circ$-algebras of tft.
 By  \cite[5.2.6.Theorem 1]{BGR} and  \cite[Theorem 2.5]{Hub94},  if $R$ is of tft, then an affinoid $K$-algebra $  (R, R^+)$ is sheafy. 
Similar to the rigid analytic generic fibers of formal schemes over $K^\circ$ \cite[7.4]{Bos}, we naturally have  a  functor from the category of     formal schemes over $K^\circ$ locally of tft 
    to adic spaces over $\Spa(K,K^\circ)$ such that the image of $ \Spf A$ is   $  \Spa(A [\frac{1}{\varpi}],A ^c)$  where $A^c$ is the integral closure   of $A$ in $A[\frac{1}{\varpi}]$.   
  The  image  of a formal scheme under this functor  is  called its adic generic fiber.
 
       We are interested in certain infinite covers of abelian schemes and   Siegel moduli spaces. They are not of tft. We need to 
       generalize  the adic generic fiber functor.              In \cite{SW},  the category   of adic spaces over $\Spa(K,K^\circ)$ is enlarged  in a sheaf-theoretical way.    Moreover,       the adic generic fiber functor extends to 
 the category of formal schemes over $K^\circ$   locally admitting a finitely generated ideal  of definition. 
 
 For our purpose, we only need the following special case. Let $\fX$ be a formal $K^\circ$-scheme which is covered
              by affine open  formal subschemes  $ \{\Spf A_i:i\in I\} $,   where $I$ is an index set, such that each affinoid $K$-algebra 
               $ (A_i[\frac{1}{\varpi}],A_i^c)$ is sheafy.   
               Then the  adic generic fiber  $\CX$   of $\fX$ is an adic space over  $\Spa(K,K^\circ)$ in the sense of Definition \ref{adicdef}.
               Indeed, $\CX$  is the obtained by glueing the affinoid adic spaces  $\Spa (A_i[\frac{1}{\varpi}],A_i^c)$'s in the obvious way. 
                         We have an easy consequence.
 \begin{lem}\label{algpoints} Let $\CX$ be the adic generic fiber  $\fX$. Then  there is 
     a natural bijection  $\fX (K^\circ)\cong \CX(K, K^\circ).$ 
\end{lem}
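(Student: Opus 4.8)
\emph{Proof strategy.} The plan is to reduce to the affine case and then unwind both definitions. First I would note that both sides are computed locally on the source: $\Spf K^{\circ}$ has a single point, and since the norm on $K$ is real-valued, $K^{\circ}$ is a rank-one valuation ring, so the only valuation ring of $K$ strictly containing $K^{\circ}$ is $K$ itself; as $K$ is nondiscrete the trivial valuation on $K$ is not continuous, and hence $\Spa(K,K^{\circ})$ also consists of a single point. Therefore any morphism $\Spf K^{\circ}\to\fX$, resp.\ $\Spa(K,K^{\circ})\to\CX$, has image contained in some affine open $\Spf A_i$, resp.\ in $\CX_i:=\Spa(A_i[\tfrac1\varpi],A_i^{c})$, and so factors through that open subspace. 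Thus it is enough to produce, for each $i$, a bijection $\Spf A_i(K^{\circ})\cong\CX_i(K,K^{\circ})$ compatible with passage to smaller affine opens; the global bijection is then obtained by gluing, using that the formation of the adic generic fiber is functorial and that Huber's universal property of $\Spa$ is natural.

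For the affine statement, recall that $\Spf A_i(K^{\circ})$ is the set of continuous $K^{\circ}$-algebra homomorphisms $A_i\to K^{\circ}$, while, by Huber's description of morphisms into $\Spa$ (\cite{Hub94}; the pair $(A_i[\tfrac1\varpi],A_i^{c})$ is sheafy by hypothesis, and valuations are normalized as in Assumption \ref{norm}), $\CX_i(K,K^{\circ})$ is the set of continuous $K$-algebra homomorphisms $\phi\colon A_i[\tfrac1\varpi]\to K$ with $\phi(A_i^{c})\subset K^{\circ}$. Given $\psi\in\Spf A_i(K^{\circ})$, one has $\psi(\varpi)=\varpi\in K^{\times}$, so $\psi$ extends uniquely along the canonical map $A_i\to A_i[\tfrac1\varpi]$ to a homomorphism $\bar\psi\colon A_i[\tfrac1\varpi]\to K$. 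I would then check: (i) $\bar\psi(A_i^{c})\subset K^{\circ}$, because an element of $A_i^{c}$ is integral over $\im(A_i)$, so its image under $\bar\psi$ is integral over $K^{\circ}$, and $K^{\circ}$, being a valuation ring, is integrally closed in $K$; and (ii) $\bar\psi$ is continuous, because on the open subring $\im(A_i\to A_i[\tfrac1\varpi])$ it is induced by $\psi$ (which annihilates the $\varpi$-power torsion of $A_i$ since $K^{\circ}$ is a domain), and a group homomorphism that is continuous on an open subgroup is continuous. Conversely, to $\phi\in\CX_i(K,K^{\circ})$ I would associate the composite $A_i\to A_i[\tfrac1\varpi]\xrightarrow{\phi}K$: it lands in $K^{\circ}$ since $\im(A_i)\subset A_i^{c}$, and it is continuous since $K^{\circ}$ is open in $K$ ($K$ being nondiscrete). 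By uniqueness of the extension along $A_i\to A_i[\tfrac1\varpi]$, these two assignments are mutually inverse.

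Finally I would verify compatibility with restriction: for an affine open $\Spf B\hookrightarrow\Spf A_i$, the adic generic fiber of $\Spf B$ is the corresponding open of $\CX_i$, and the evident square comparing the two affine bijections commutes, by functoriality of the adic-generic-fiber construction and of Huber's universal property. Hence the identification does not depend on the chart used --- two charts $\Spf A_i$, $\Spf A_j$ containing a given point are reconciled through a common affine open contained in $\Spf A_i\cap\Spf A_j$ --- and the affine bijections glue to the asserted natural bijection $\fX(K^{\circ})\cong\CX(K,K^{\circ})$.

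I do not expect a serious obstacle here; the points that require a little care are the two verifications in the affine case (integral closedness of $K^{\circ}$ in $K$, and the continuity of the extended and restricted homomorphisms) together with, for the reduction step, the observation that $\Spa(K,K^{\circ})$ is a single point.
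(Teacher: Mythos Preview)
Your argument is correct. The paper does not actually prove this lemma --- it is stated as ``an easy consequence'' of the construction of the adic generic fiber --- and your proposal supplies precisely the standard details: the reduction to affines via the fact that $\Spf K^\circ$ and $\Spa(K,K^\circ)$ are single points, the explicit affine bijection using Huber's representability together with the integral closedness of the valuation ring $K^\circ$, and the gluing compatibility. One small remark: in the converse direction, the continuity of the restricted map $A_i\to K^\circ$ follows simply because $K^\circ$ carries the subspace topology from $K$ and the composite $A_i\to A_i[\tfrac1\varpi]\to K$ is continuous; the openness of $K^\circ$ in $K$ is not what is doing the work there. This does not affect the correctness of your proof.
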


                          \subsection{Tube neighborhoods and  distance functions}\label{tube neighborhoods in the adic generic fiber}  
 
                    \subsubsection{Tube neighborhoods}
  Let $\fX=\Spf B$, where $B$ is a flat $K^\circ$-algebra   of tft. 
  Let $\fZ$ be a closed formal subscheme defined by a    closed  ideal $ I$. 
  Let $\CX$ be the adic generic fiber of $\fX $.  Then $\CX=\Spa(R,R^+)$ where $R= B [\frac{1}{\varpi}]$ and $R^+ $ is the integral closure of $B $ in $R$.
     \begin{defn}\label{defnnbhd} 
        For $\ep\in K^\times ,$ the   $\ep$-neighborhood of $\fZ$  in $\CX$ is defined to be the   subset
                      $$\CZ_\ep: =\{x\in \CX:   |f (x)|\leq |\ep(x)| \mbox{ for every } f\in I \}.$$                                                                          \end{defn}
                       \begin{rmk} Note that $\CZ_\ep$ may not be open in $\CX$.
                        If $I$ is generated by $\{f_1,...,f_n\}$,  then  $\CZ_\ep=U(\frac{f_1,...,f_n,\ep}{\ep})$   is naturally  an open adic subspace  of $\CX$.         In fact,  for our applications, we only use this case.                 
                    \end{rmk}


                 Definition \ref{defnnbhd} immediately  implies the following lemmas.
\begin{lem} \label{intersect}                    Let $\fZ=  \bigcap\limits_{i=1}^m \fZ_i$, where each $\fZ_i$ is a  closed formal subschemes of $\fX$.
                      For $\ep\in K^\times$, let 
$\CZ_{i,\ep}$ be the 
$\ep $-neighborhood of $\fZ_i$. 
Then  $\CZ_\ep=   \bigcap\limits _{i=1}^m\CZ_{i,\ep}.$
 \end{lem}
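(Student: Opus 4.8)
The plan is to unwind Definition \ref{defnnbhd} and reduce the claim to the ultrametric inequality. Write $I_i \subseteq B$ for the closed ideal defining $\fZ_i$. First I would record that the finite intersection $\fZ = \bigcap_{i=1}^m \fZ_i$ is the closed formal subscheme cut out by the ideal $I = I_1 + \cdots + I_m$; since in all applications each $I_i$ is finitely generated (cf.\ the remark following Definition \ref{defnnbhd}), the sum $I$ is again finitely generated, hence closed in $B$, so Definition \ref{defnnbhd} applies to $\fZ$ with this $I$. Consequently, for $x \in \CX$ one has $x \in \CZ_\ep$ if and only if $|f(x)| \le |\ep(x)|$ for every $f \in I_1 + \cdots + I_m$, whereas $x \in \bigcap_{i=1}^m \CZ_{i,\ep}$ if and only if $|f_i(x)| \le |\ep(x)|$ for every $i$ and every $f_i \in I_i$.

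Granting this, the inclusion $\CZ_\ep \subseteq \bigcap_{i=1}^m \CZ_{i,\ep}$ is immediate from $I_i \subseteq I$. For the reverse inclusion, take $x$ in the intersection and $f \in I$, and write $f = f_1 + \cdots + f_m$ with $f_i \in I_i$; since the chosen representative $|\cdot(x)|$ is a non-archimedean valuation, $|f(x)| \le \max_{1 \le i \le m} |f_i(x)| \le |\ep(x)|$, so $x \in \CZ_\ep$. This settles the lemma. I do not expect any genuine obstacle: the only point that merits a sentence is the standard fact that the ideal of a finite intersection of closed subschemes is the sum of the ideals (and that this sum is closed in the tft setting), everything else being the ultrametric triangle inequality.
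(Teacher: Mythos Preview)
Your argument is correct and is exactly what the paper has in mind: the paper simply states that ``Definition \ref{defnnbhd} immediately implies the following lemmas'' and gives no further proof, so your unwinding of the definition via the sum of ideals and the ultrametric inequality is precisely the intended verification. The only remark is that since $B$ is of tft over $K^\circ$ it is Noetherian, so every ideal is automatically finitely generated and closed---your hedge about ``in all applications'' is therefore unnecessary.
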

 
 \begin{lem}\label{union} 
Let $\fZ= \fZ_1\bigcup  \fZ_2$, where $\fZ_1,\fZ_2$ are closed formal subschemes of $\fX$. 

(1) Then $\CZ_{1,\ep}\subset  \CZ_{ \ep}.$

(2)
Suppose that    there exists $\delta\in K^{\circ\circ}-\{0\}$ which vanishes on $\fZ_2$.
Then  
$\CZ_\ep\subset  \CZ_{1,\ep/\delta}.$
          \end{lem}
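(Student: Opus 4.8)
The plan is to unwind Definition \ref{defnnbhd}, using that the scheme-theoretic union $\fZ=\fZ_1\cup\fZ_2$ is the closed formal subscheme of $\fX=\Spf B$ cut out by the intersection of the two defining ideals. Concretely, I would write $\CX=\Spa(R,R^+)$ with $R=B[\frac{1}{\varpi}]$ and $R^+$ the integral closure of $B$ in $R$, and let $I_1,I_2\subset B$ be the closed ideals defining $\fZ_1,\fZ_2$, so that $\fZ$ is defined by $I:=I_1\cap I_2$ and $\CZ_{1,\ep'}=\{x\in\CX:|f(x)|\le|\ep'(x)|\text{ for all }f\in I_1\}$ for any $\ep'\in K^\times$. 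Since $\delta\in K^{\circ\circ}\subset K^\circ\subset B$ is nonzero, we have $\ep/\delta\in K^\times$, so $\CZ_{1,\ep/\delta}$ makes sense; and the hypothesis that $\delta$ vanishes on $\fZ_2$ says exactly that $\delta\in I_2$. Note also that by Assumption \ref{norm} (applicable since $K^\circ\subset R^+$) we have $|\delta(x)|=\|\delta\|_K\neq 0$ and $|(\ep/\delta)(x)|=|\ep(x)|/|\delta(x)|$ for every $x\in\CX$.

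The key observation is that multiplication by $\delta$ carries $I_1$ into $I$: if $g\in I_1$ then $\delta g\in I_1$, while also $\delta g\in I_2$ because $\delta\in I_2$, hence $\delta g\in I_1\cap I_2=I$. Now I would fix $x\in\CZ_\ep$ and $g\in I_1$, and apply the defining inequality of $\CZ_\ep$ to the element $\delta g\in I$: this gives $|\delta(x)|\,|g(x)|=|(\delta g)(x)|\le|\ep(x)|$, and dividing by $|\delta(x)|\neq 0$ yields $|g(x)|\le|\ep(x)|/|\delta(x)|=|(\ep/\delta)(x)|$. Since $g\in I_1$ was arbitrary, $x\in\CZ_{1,\ep/\delta}$; as $x$ was arbitrary, $\CZ_\ep\subset\CZ_{1,\ep/\delta}$, as claimed.

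There is no real obstacle here: the whole argument is a one-line algebraic manipulation once the ideal-theoretic description of the union and of the tube neighborhoods is in place, which is why the excerpt can assert the lemma follows "immediately" from the definition. The only point requiring a little care is that $\delta$ is taken to be a \emph{nonzero element of the base field} $K$, not merely of the ideal $I_2$: this is precisely what makes $\ep/\delta$ a legitimate radius in $K^\times$ and makes $|\delta(x)|=\|\delta\|_K$ a fixed positive constant independent of $x$, so that the division step is valid uniformly over all of $\CX$.
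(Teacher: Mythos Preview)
Your proof is correct and is precisely the ``immediate'' verification the paper has in mind: the paper gives no explicit argument beyond asserting that Definition~\ref{defnnbhd} implies the lemma directly, and your unwinding via $\delta\cdot I_1\subset I_1\cap I_2=I$ together with Assumption~\ref{norm} is exactly that verification.
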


         Let $X$ be a  $K^\circ$-scheme  locally   of finite  type, and $\fX$   the $\varpi$-adic formal completion                     of    $ X $.              Let                  $\CX$ be the adic generic fiber of $\fX$.                     We also call $\CX$ the adic generic fiber of $X $.
                         Let $Z$ be a closed subscheme of   $X_K$.  We define tube neighborhoods of $Z$ in $\CX$ as follows  
                           (see         also \cite[Proposition 8.7]{Sch12}).

Suppose that $X $ is affine.    Let 
  $\fZ\subset \fX$ be the closed formal subscheme associated to the schematic closure of  $Z$.        \begin{defn} 
    For $\ep\in K^\times ,$ the   $\ep$-neighborhood of $Z$  in $\CX$ is defined to be the        $\ep$-neighborhood of $\fZ$  in $\CX$.                                                                \end{defn}
                                        
                      \begin{rmk}     If the schematic closure of $Z$
     has empty special fiber,
     then   $\CZ_\ep$ is empty.
                    \end{rmk}                     
                
                                 To  define tube neighborhoods   in general, we need to glue  affinoid pieces.
               We consider the following relative situation.    
            Let   $ Y $ be another   affine  $K^\circ$-scheme     of finite type,  and $\Phi:Y\to X$  a $K^\circ$-morphism. Let $W$ be the preimage of $Z$ which is a closed subscheme of $Y_K$, and $\CW_\ep$   its $\ep$-neighborhood.  
            By the functoriality of formal completion and taking adic generic fibers, we have an  induced morphism        $\Psi:\CY\to \CX$.
  From the fact that schematic image is compatible with flat base change (see   \cite[2.5, Proposition 2]{BLR}), we easily deduce the following lemma.
   \begin{lem}\label{pullback1}If  $\Phi:Y\to X$ is flat, then $\Psi^{-1}(\CZ_{\ep})=\CW_{ \ep}$. 
 In particular,  if $Y\subset X$ is an open $K^\circ$-subscheme,  $\CW_\ep=\CZ_\ep \bigcap \CY$ under the natural inclusion $\CY\incl \CX$.
  \end{lem}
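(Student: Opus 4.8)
The strategy is to reduce the statement to an elementary comparison of the ideals cutting out the closed formal subschemes involved; the only substantive point — and the only place where flatness of $\Phi$ enters — is that the ideal of the schematic closure of $W$ in $Y$ is generated by the $\Phi$-pullback of the ideal of the schematic closure of $Z$ in $X$. Write $X=\Spec B_0$ and $Y=\Spec C_0$, so that $\fX=\Spf B$, $\fY=\Spf C$ with $B$, $C$ the $\varpi$-adic completions of $B_0$, $C_0$, and let $\phi\colon B\to C$ be the ring map corresponding to the induced morphism of formal schemes. On adic generic fibres, $\Psi\colon\CY\to\CX$ is then the map $\Spa(C[\frac{1}{\varpi}],C^c)\to\Spa(B[\frac{1}{\varpi}],B^c)$ induced by $B[\frac{1}{\varpi}]\to C[\frac{1}{\varpi}]$.

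First I would pin down the two closed formal subschemes. Let $\overline Z=V(I_0)\subset X$ be the schematic closure of $Z\subset X_K$, so that $\fZ=V(I)$ with $I\subset B$ the closed ideal which is the closure of $I_0B$; similarly let $\overline W=V(J_0)\subset Y$ be the schematic closure of $W\subset Y_K$, with $\fW=V(J)$. Since $X_K\times_X Y=Y_K$, we have $W=\Phi_K^{-1}(Z)=Z\times_{X_K}Y_K=Z\times_X Y$ as a closed subscheme of $Y_K$, and the inclusion $W\into Y$ is precisely the projection $Z\times_X Y\to Y$. As $\Phi$ is flat, compatibility of the scheme-theoretic image with flat base change \cite[2.5, Proposition 2]{BLR}, applied to the immersion $Z\to X$ with schematic image $\overline Z$, gives that the schematic image of $W\to Y$ equals $\overline Z\times_X Y$; thus $\overline W=\overline Z\times_X Y$, i.e. $J_0=I_0C_0$. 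Completing $\varpi$-adically, $J$ is the closed ideal of $C$ topologically generated by $\phi(I)$. (Without flatness one has only the inclusion $I_0C_0\subseteq J_0$, which accounts for the always-valid inclusion $\CW_\ep\subseteq\Psi^{-1}(\CZ_\ep)$.)

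It remains to unwind the definition of the tube neighbourhoods. Fix $y\in\CY$ and set $x=\Psi(y)$. Since $\Psi$ is a morphism in the category $(V)$, the valuation $|\cdot(x)|$ is induced from $|\cdot(y)|$ through $\phi$, so $|f(x)|=|\phi(f)(y)|$ for all $f\in B$ and $|\ep(x)|=|\ep(y)|$ for $\ep\in K^\times$. Hence $y\in\Psi^{-1}(\CZ_\ep)$ if and only if $|\phi(f)(y)|\leq|\ep(y)|$ for every $f\in I$. Because every element of $C$ has valuation at most $1$ at $y$ (as $C\subset C^c$ and $|\cdot(y)|\leq 1$ on $C^c$) and $|\cdot(y)|$ is continuous, this condition is in turn equivalent to $|g(y)|\leq|\ep(y)|$ for every $g$ in the closed ideal of $C$ generated by $\phi(I)$, i.e. for every $g\in J$ — which is exactly the defining condition of $\CW_\ep$. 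Therefore $\Psi^{-1}(\CZ_\ep)=\CW_\ep$. For the final assertion: an open immersion $Y\into X$ of $K^\circ$-schemes is flat, the induced map $\CY\to\CX$ is the open immersion $\CY\into\CX$, and $W=Z\times_X Y=Z\cap Y_K$; so the first part gives $\CW_\ep=\Psi^{-1}(\CZ_\ep)=\CZ_\ep\cap\CY$.

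The only real obstacle is the identity $\overline W=\overline Z\times_X Y$ (equivalently $J_0=I_0C_0$): this is the single place where flatness is used and it is indispensable, since for a non-flat $\Phi$ the schematic closure of $W$ can be strictly smaller than $\overline Z\times_X Y$, making $\Psi^{-1}(\CZ_\ep)$ strictly larger than $\CW_\ep$. The remaining steps are a routine unravelling of the adic generic fibre construction together with the continuity of valuations, used — as in the remark following Definition \ref{defnnbhd} — to pass freely between a closed ideal and any set of topological generators.
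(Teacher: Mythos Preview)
Your proof is correct and follows exactly the approach the paper indicates: the paper's proof consists of the single sentence ``From the fact that schematic image is compatible with flat base change (see \cite[2.5, Proposition 2]{BLR}), we easily deduce the following lemma,'' and you have simply written out the easy deduction in full, citing the same reference for the only nontrivial step $\overline W=\overline Z\times_X Y$.
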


 Now we turn to the general case.   
 Let $X$ be an $K^\circ$-scheme locally of finite type. For an open subscheme $U\subset X$, let  $Z_U$ be the restriction of $Z$ to $U$.  
 Let $S=\{U_i:i\in I\}$ be an affine open cover of $X$, where $I$ is an index set and each $U_i $ is of finite type over $K^\circ$. 
   Let $\CZ_{U_i,\ep}$ be the $\ep$-neighborhood  of $Z_{U_i}$  in the adic generic fiber $\CU_i$ of $U_i$.  Note that each $\CU_i$  is naturally an open adic subspace of $\CX$.
 \begin{defn} \label{globalnbhd}Define the $\ep$-neighborhood of $Z$ in $\CX$ by
 $\CZ_\ep:=\bigcup\limits_{U\in S}\CZ_{U,\ep}.$

  \end{defn} 
   
As a corollary of  Lemma \ref{pullback1}, this definition is independent of  the choice of  the cover $\CU$.

\subsubsection{Distance functions} \label{The distance function} 

 Let   $U$ be an affine open subset  of $X$ which is flat over $K^\circ$. Let $I $ be an ideal of the coordinate ring of $U$. 
For $x\in U(K) $,  
define  $d_U(x,I):=\sup\{\|f(x)\|:f\in I\}.$ 
   Let $\CI$ be the ideal sheaf of the schematic closure of $Z$ in $X$.
 
   Assume that $X$ is of finite type over $K^\circ$.
 Let     $\CU:=\{U_1,...,U_n\}$ be a finite  affine open cover of $ X $ such that each $U_i$ is flat over $K^\circ$.
            For $x\in X(K)$, define $d^\CU(x,Z)$ to be the maximum of $ d_{U_i}(x,I)$ over all $i$'s such that $x\in U_i$. 


    
Let 
   $x^\circ\in X(K^\circ)$ and $x$   the generic point of $x^\circ$.  Regard $x$ as a point in $\CX(K,K^\circ)$ via Lemma \ref{algpoints}.
    Let   $U$ be an affine open subset  of $X$   flat over $K^\circ$ such that $x^\circ\in U(K^\circ)$.   We have a  tautological relation between the distance function   and tube neighborhoods. 
 
           \begin{lem} \label{distep}
       Let $\ep\in K^\times$. 
Then  $x\in \CZ_{U,\ep}$ if and only if     $d_U (x,\CI(U))\leq \|\ep\|$.

                           \end{lem}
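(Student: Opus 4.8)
The statement to prove is Lemma~\ref{distep}: for $x$ the generic point of $x^\circ\in X(K^\circ)$ sitting inside an affine open $U$ flat over $K^\circ$, we have $x\in\CZ_{U,\ep}$ if and only if $d_U(x,\CI(U))\leq\|\ep\|$.

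\medskip\noindent\textbf{Plan of proof.}
The plan is to simply unwind the two definitions and match them term by term, using Assumption~\ref{norm} to pass between the valuation-theoretic formulation of tube neighborhoods and the norm-theoretic formulation of the distance function. First I would reduce to the affine situation: by hypothesis $U=\Spf B$-type data is available, namely $U$ is affine and flat over $K^\circ$, so its $\varpi$-adic completion $\fU=\Spf B$ has adic generic fiber $\CU=\Spa(R,R^+)$ with $R=B[\tfrac1\varpi]$ and $R^+$ the integral closure of $B$ in $R$, exactly as set up before Definition~\ref{defnnbhd}. Let $\fZ\subset\fU$ be the closed formal subscheme attached to the schematic closure of $Z_U$, defined by the ideal $I:=\CI(U)$ (extended to $B$); then by definition $\CZ_{U,\ep}=\{y\in\CU:|f(y)|\leq|\ep(y)|\text{ for all }f\in I\}$.

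\medskip\noindent\textbf{Key steps.}
The second step is to identify the point. Via Lemma~\ref{algpoints} the element $x^\circ\in\fU(K^\circ)$ corresponds to a point $x\in\CU(K,K^\circ)$, which is the valuation $|\cdot(x)|$ on $R$ given by $|g(x)|=\|g(x^\circ)\|$, where $g(x^\circ)\in K^\circ$ is the image of $g\in B$ under the map $B\to K^\circ$ defining $x^\circ$ (for $g\in R$ we divide by powers of $\varpi$). Here I use Assumption~\ref{norm}, which pins down the representative of the equivalence class so that $|\cdot(x)|$ restricts to $\|\cdot\|_K$ on $K$; in particular $|\ep(x)|=\|\ep\|_K=\|\ep\|$. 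The third step is the direct comparison: by the above, $x\in\CZ_{U,\ep}$ means $|f(x)|\leq|\ep(x)|$ for every $f\in I$, i.e. $\|f(x^\circ)\|\leq\|\ep\|$ for every $f\in I$. Now $d_U(x,\CI(U))$ was defined as $\sup\{\|f(x^\circ)\|:f\in\CI(U)\}$ (the paper writes $d_U(x,I)=\sup\{\|f(x)\|:f\in I\}$ with $x\in U(K)$, and the value $f(x)$ there is exactly $f(x^\circ)$ under the identification $X(K^\circ)\subset X(K)$). Hence $x\in\CZ_{U,\ep}$ is equivalent to $\sup\{\|f(x^\circ)\|:f\in I\}\leq\|\ep\|$, which is precisely $d_U(x,\CI(U))\leq\|\ep\|$. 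One should note the sup is attained as a max on finitely many generators of $I$, so the non-strict inequality is preserved and no issue arises from passing between ``all $f$'' and ``generators''.

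\medskip\noindent\textbf{Main obstacle.}
There is essentially no deep obstacle; the content is entirely bookkeeping. The one point requiring genuine care is the compatibility of the ideal $\CI(U)$ of the schematic closure in $X$ with the ideal defining $\fZ$ in the formal completion $\fU$ — i.e. that forming the schematic closure commutes with $\varpi$-adic completion on the flat affine scheme $U$. This is where flatness of $U$ over $K^\circ$ is used, via the fact that schematic image is compatible with flat base change (\cite[2.5, Proposition 2]{BLR}), the same input already invoked for Lemma~\ref{pullback1}; so it can be cited rather than reproved. The other mild subtlety is making sure the valuation $|\cdot(x)|$ really extends from $B$ to $R=B[\tfrac1\varpi]$ with the stated values and lies in $\Spa(R,R^+)$ (it does, since $x^\circ\in\fU(K^\circ)$ forces $|g(x)|\leq1$ for $g\in B$, hence for $g\in R^+$). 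Once these are in place the ``if and only if'' is immediate.
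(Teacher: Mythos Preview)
Your proposal is correct and follows the same approach the paper intends: the paper calls this relation ``tautological'' and gives no proof, and indeed the lemma is an immediate unwinding of Definition~\ref{defnnbhd}, the definition of $d_U$, and Assumption~\ref{norm}, exactly as you describe. Your extra care about the ideal in the formal completion and the sup being a max on generators is sound but more than the paper requires; no further justification is needed.
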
 
                 By  Lemma   \ref{pullback1},    the number  $d_U (x,\CI(U))$   does not depend on the choice of $U$.
     Define $$d(x,Z):= d_U (x,\CI(U))$$
     Then  $d(x,Z) = d^\CU(x,Z) $ for every finite  affine open cover  $\CU $ of $X$. 
Our distance function  coincides with the one  in the end of \cite[Section 1]{Sca0}, which is defined globally.

A finite extension  of $K$ has a natural structure of a non-archimedean field (see \cite{BGR}). Let $\bar K$ be an algebraic closure of $K$.
    The above discussion is naturally generalized to      $x \in X(\bar K)$ and $Z\subset X_{\bar K}$.

\subsubsection{Tate-Voloch type  sets}\label{TVset} Let $X$ be of finite type over $K^\circ$.
 \begin{defn}\label{TVtype}Fix an arbitrary   finite  affine open cover  $\CU$ of $ X $ by subschemes flat over $K^\circ$.
A  set 
 $T\subset X(\bar K)$    is of Tate-Voloch type if for every closed subscheme $Z$ of $X_{\bar K}$, there exists a constant $c>0$ such that for every 
    $x\in T$, if $d^\CU(x,Z)\leq c$, then $x\in Z(\bar K)$.   
    \end{defn}
  \begin{rmk}  Is there always a set of Tate-Voloch type? Let $C\subset X$ be irreducible and flat over $K^\circ$ of relative dimension 1. Choose one point in each residue disk in $C$. 
  Easy to check that this set of points of $X$ is   of Tate-Voloch type. Moreover,  we can choose  points in   residue disks in $C$ chose degrees are unbounded. 
The following questions are  more meaningful.   Is there always a   Tate-Voloch type set  which is  Zariski dense   in  $X$?
Can  the points   in this set have   unbounded
  the degrees over $K$? 
     Indeed, the   Tate-Voloch type sets  in Theorem \ref{TVcor1}   and in  the results of  Habegger, Scanlon   and 
Xie  give positive answers to these two questions.
\end{rmk}

     Let $Y$ be a  $K^\circ$-scheme  of finite type,
and $\pi:Y\to X$   a  finite  schematically dominant  morphism.
\begin{lem}\label{T'T}Let $T\subset X(\bar K)$ be of Tate-Voloch type and   $T'=\pi^{-1}(T)\subset Y(\bar K)$. 
Then   $T'$ is of Tate-Voloch type.
\end{lem}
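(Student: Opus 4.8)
The plan is to reduce the statement to the affine case and then use the flatness/finiteness of $\pi$ together with the compatibility of schematic closure with flat base change. First I would fix a finite affine open cover $\CU = \{U_1,\dots,U_n\}$ of $X$ by subschemes flat over $K^\circ$ that I use to measure distances on $X$, and a finite affine open cover $\CV = \{V_1,\dots,V_m\}$ of $Y$ refining $\{\pi^{-1}(U_i)\}$ by subschemes flat over $K^\circ$, which I use to measure distances on $Y$. By the cover-independence of the distance function up to a bounded constant (the paragraph in \ref{The distance function}, citing \cite[Section 1]{Sca0}), it suffices to prove the Tate--Voloch property for these particular choices of cover, so I may assume each $V_j$ maps into some $U_{i(j)}$.

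Next, given a closed subscheme $W$ of $Y_{\bar K}$, I want to produce the constant $c>0$. Let $Z$ be the schematic image of $W$ under $\pi_{\bar K}: Y_{\bar K}\to X_{\bar K}$; since $\pi$ is schematically dominant and finite, $Z$ is a closed subscheme of $X_{\bar K}$, and $W \subseteq \pi_{\bar K}^{-1}(Z)$. Because $T$ is of Tate--Voloch type on $X$, there is a constant $c_0>0$ such that for $x\in T$, if $d^{\CU}(x,Z)\le c_0$ then $x\in Z(\bar K)$. Now take any $y\in T' = \pi^{-1}(T)$, set $x=\pi(y)\in T$, and suppose $d^{\CV}(y,W)$ is very small. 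The key step is to bound $d^{\CU}(x,Z)$ from above in terms of $d^{\CV}(y,W)$: on an affine piece, $Z$ has ideal $\CI_Z$ and $\pi^\#(\CI_Z)$ lies in the ideal $\CI_W$ of $W$ (as $Z$ is the schematic image, hence $W$ maps into $Z$), so $d_{V_j}(y,\CI_W(V_j)) \ge d_{V_j}(y,\pi^\#\CI_Z(V_j)) = d_{U_{i(j)}}(x,\CI_Z(U_{i(j)}))$, the last equality using $\|\pi^\#(f)(y)\| = \|f(\pi(y))\|=\|f(x)\|$. Taking the appropriate maxima over pieces containing $y$ (and noting every $U_i$ containing $x$ pulls back to pieces covering the fiber over $x$, so is accounted for), I get $d^{\CU}(x,Z) \le d^{\CV}(y,W)$ up to reindexing. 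Hence choosing $c = c_0$ (or $c_0$ times the bounded comparison constant) forces $x\in Z(\bar K)$ once $d^{\CV}(y,W)\le c$.

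It remains to upgrade "$x\in Z(\bar K)$" to "$y\in W(\bar K)$." This is where I expect the main obstacle, because a point of $T'$ lying over a point of $Z$ need not a priori lie on $W$ — the fiber $\pi_{\bar K}^{-1}(Z)$ is larger than $W$ in general. The fix is to iterate the Tate--Voloch estimate one level up: restrict to the closed subscheme $\pi_{\bar K}^{-1}(Z)$ of $Y_{\bar K}$ (which is finite over $Z$), and within it compare distances to $W$ directly. Concretely, once we know $x\in Z$, the point $y$ lies in the finite $\bar K$-scheme $\pi^{-1}(x)$; I would apply the distance estimate for the pair $(W, \pi^{-1}(x))$ — or more robustly, use Lemma \ref{pullback1} to see that the $\ep$-neighborhood of $W$ in the adic generic fiber of $Y$ pulls back correctly under the (flat, since we are on $Y$) inclusion of affine pieces, and then invoke Lemma \ref{distep} relating $d(y,W)$ to membership of $y$ in $\CW_\ep$. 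Shrinking $\ep$ to zero, membership of $y$ in $\CW_\ep$ for all small $\ep$ forces $y\in W$, because $W$ is closed; the finiteness of $\pi$ guarantees that "$d^{\CV}(y,W)\le c$ with $x\in Z$" already pins $y$ into an arbitrarily small tube around $W$ inside the compact fiber, hence onto $W$. Packaging these estimates with a single $c$ (the minimum of the finitely many constants produced along the way) completes the proof.
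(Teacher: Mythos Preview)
Your first two paragraphs are fine and match the paper's argument: reduce to the affine case, let $Z$ be the schematic image of $W$ in $X_{\bar K}$, and use the elementary inequality $d(x,Z)\le d(y,W)$ to force $x=\pi(y)\in Z(\bar K)$ once $d(y,W)$ is small.

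The gap is entirely in your last paragraph. Knowing $x\in Z$ only gives $y\in\pi^{-1}(Z)$, and $\pi^{-1}(Z)$ is in general strictly larger than $W$. You try to close this by saying that ``$d(y,W)\le c$ with $x\in Z$ already pins $y$ into an arbitrarily small tube around $W$ inside the compact fiber,'' but this is not true: you have only \emph{one} inequality $d(y,W)\le c$ for a fixed $c$, not membership in $\CW_\ep$ for all small $\ep$, and the minimum distance from a point of $\pi^{-1}(x)\setminus W$ to $W$ varies with $x$ and is not uniformly bounded below. Invoking Lemma~\ref{pullback1} or Lemma~\ref{distep} does not help here, because those lemmas compare tubes under pullback, whereas what you need is a \emph{uniform} lower bound on $d(y,W)$ for $y\in\pi^{-1}(Z)\setminus W$ --- which is exactly the Tate--Voloch statement you are trying to prove.

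The paper fixes this by induction on $\dim W$. Once $x_n\in Z$ one writes $\pi^{-1}(Z)=W\cup Z_1$ with $Z_1$ a closed subscheme not containing $W$ and containing the offending points $y_n\notin W$. A short ideal computation (using that the relevant ideals are finitely generated, so the gap between $I_1+I'$ and its saturation $I_2$ is bounded by a fixed power of $\varpi$) shows $d(y_n,\,W\cap Z_1)\to 0$. Since $W\cap Z_1$ has strictly smaller dimension than $W$, the induction hypothesis forces $y_n\in W\cap Z_1\subset W$, a contradiction. This dimension-drop/induction step is the missing idea in your proposal.
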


\begin{proof}We may assume that $Y=\Spec B$ and $X=\Spec A$  where $A$ is a subring of $B$. 
 Let  $L$ be a finite extension of $K$.
 Let $Z'$ be  a closed subscheme  of $Y_{L}$.  We need to show that $d(x', Z')$ has a positive lower bound  for $x'\in T'-Z'({\bar K})$.  
 Define the dimension of $Z'$ to be the maximal dimension of the irreducible components of $Z'$. We allow $Z'$ to be empty, in which case we
define its dimension to be $-1$. We do induction on the dimension of $Z'$.
Then the dimension $-1$ case is trivial.  Now we consider the general case with the hypothesis that the lemma holds for all lower dimensions. 

Suppose such a lower bound does not exists, then there exists a sequence of $x_n' \in T'- Z'({\bar K})$
 such that  $d(x_n',Z
')\to  0$ as $n\to \infty$. We will find a contradiction.
Let $Z$ be the schematic image of $Z'$ by $\pi$, $x_n=\pi(x_n')$.  
Let the   schematic  closure  
 of $Z$ in $X_{L^\circ }$(resp. $Z'$ in $Y_{L^\circ }$) be  defined  by an ideal $J\subset A\otimes{L^\circ }$ (resp.  $I\subset B\otimes{L^\circ }$).
 Then 
 $I\otimes L \supset 
 J B\otimes L$.
Since      $J  B\otimes{L^\circ }$ is finitely generated, 
 there exists a positive integer $r$ such that    $I \supset 
 \varpi^rJ  B\otimes{L^\circ }$. Thus   $d(x_n,Z
 )\to  0$ as $n\to \infty$.
 Since $T$ is of Tate-Voloch type, $ x_n\in Z(\bar K)$ for $n$ large enough. We may assume that every $ x_n\in Z(\bar K)$.
Since $x_n'\not\in Z'$, $\pi^{-1}(Z)=Z'\bigcap Z_1$ where $Z_1$ is a closed subscheme of $Y_L$ not containing $Z'$ but containing all
$x_n'$. Claim: $d(x_n', Z'\bigcap Z_1)\to 0$  as $n\to \infty$.
 This contradicts  the induction hypothesis. Thus  $d(x', Z')$ has a positive lower bound  for $x'\in T'-Z'(K)$. 
Now we prove the claim.
 Let the   schematic  closure   of 
 $Z_1$ in $Y_{L^\circ}$ be defined by an ideal $I_1\subset B \otimes{L^\circ }$. 
 Then the schematic closure of 
  $Z'\bigcap Z_1$ is defined by the following ideal of $ B \otimes{L^\circ }$: $$I_2:=(I_1\otimes{L  }+I'\otimes{L })  \bigcap  B \otimes{L^\circ }=(I_1 +I')\otimes{L }  \bigcap  B \otimes{L^\circ },$$ which is finitely generated.
Thus there exists a positive integer $s$ such that $(I_1+I')  \supset \varpi ^s I_2$. 
Now the claim follows from that  $d(x_n',Z
')\to  0$  and $x_n'\in Z_1$.
  \end{proof}

 \subsection{Perfectoid spaces}  \subsubsection{Two perfectoid fields}\label{perftheory}
Instead of recalling the definition of perfectoid fields (see \cite[Definition 3.1]{Sch12}),
we consider two examples and use them through out this paper.    

  Let  $k=\bar \BF_p$,  $W=W(k)$ the ring of Witt vectors, and  $L=W[\frac{1}{p}]$. For  each integer $n\geq 0$, let $\mu_{p^n}$ be a primitive $p^n$-th root of unity in $\bar L$ such   that $\mu_{p^{n+1}}^p=\mu_{p^n}$.
  Let $$L^\cycl:=\bigcup\limits_{n=1}^\infty L(\mu_{p^n}).$$ 
  Let $\varpi=\mu_p-1$, 
  and $K$
  the $\varpi$-adic completion of $L^\cycl$. 
Then $K$ is a perfectoid field  in the sense that   
$$K^\circ/\varpi\to K^\circ/\varpi ,\ x\mapsto x^p$$ is surjective (see \cite[Definition 3.1]{Sch12}). Let $$K^\flat= k((t^{1/p^\infty})) $$ be the $t$-adic completion  of $ \bigcup\limits_{n=1}^\infty k((t))(t^{1/p^n})$.
  Then   $ K^\flat$  is a perfectoid field.
 Let   $\varpi^\flat=t^{1/p}$. Equip $ {K^\flat}$ with  the nonarhimedean norm  $\|\cdot\|_{K^\flat}$ such that   $\|\varpi^\flat\|_{K^\flat}=\|\varpi\|_{K}$.
 Consider the  morphism 
  \begin{equation}K^\circ/\varpi\to K^\fcc/\varpi^\flat,\ \mu_{p^n}-1\mapsto t^{1/p^n}\label{2.3.2eq}.\end{equation}
This morphism is well-defined since $$(\mu_{p^n}-1)^{p^m}\cong \mu_{p^{n-m}}-1(\mod \varpi)$$ for $m<n$.
Easy to check this morphism  is an isomorphism. We call  $ K^\flat$   the tilt of $K$.   

\subsubsection{Perfectoid spaces} 
The most  important property of a perfectoid  $K$-algebra $R$ is  that $$R^\circ/\varpi\to R^\circ/\varpi ,\ x\mapsto x^p$$ is surjective (see
 \cite[Definition 5.1]{Sch12}). 
  An affinoid $K$-algebra $(R,R^+)$ is called perfectoid if $R$ is perfectoid.
 By  \cite[Theorem 6.3]{Sch12}, an affinoid $K$-algebra $(R,R^+)$  is sheafy.
Define a perfectoid space   over $K$ to be  an adic space over $K$ 
  locally isomorphic to $\Spa(R,R^+)$, where $(R,R^+)$
is a perfectoid affinoid $K$-algebra. 

By   \cite[Theorem 5.2]{Sch12}, there is an equivalence  between the categories of  perfectoid  $K$-algebras and perfectoid  $K^\flat$-algebras.  
By \cite[Lemma 6.2]{Sch12},  and  \cite[Proposition 6.17]{Sch12}, this category equivalence induces an equivalence between
between the categories of perfectoid  affinoid $K$-algebras and perfectoid  affinoid $K^\flat$-algebras, as well as an equivalence between
between the categories of perfectoid spaces  over $K$ and perfectoid spaces  over $K^\flat$.
  
 The image of an object or a morphism in the category of perfectoid  $K$-algebras, perfectoid  affinoid $K$-algebras,  or 
perfectoid spaces   over $K$ is called its tilt.
  
   \subsubsection{Two important maps $\sharp$ and $\rho$}\label{srho}
   Let $R$ be perfectoid  $K$-algebra  and $R^{\flat}$ its tilt.
    By \cite[Proposition 5.17]{Sch12}, there is a     multiplicative homeomorphism 
    $R^{\flat}\cong   \vpl\limits_{x\mapsto x^p} R .$ 
    Denote the projection to the first component by $$R^\flat\to R, \ f\mapsto f^\sharp.$$ 
    Let $(R,R^+)$ be perfectoid  affinoid $K$-algebra and $(R^\flat,R^\fpl)$ its tilt.
For $x\in \Spa(R,R^+), $ let $\rho(x)\in  \Spa(R^\flat,R^\fpl)$  be the valuation
   $|f(\rho(x))|=|f^\sharp(x)|$ for $f\in R^\flat$. This defines a map between sets
$$\rho: \Spa(R,R^+)\mapsto \Spa(R^\flat,R^\fpl).$$   
Note that  $  \Spa(R^\flat,R^\fpl)$ is the tilt of $  \Spa(R,R^+) $. The definition of $\rho$ glues and we have a 
map    $$\rho_\CX:|\CX|\cong|\CX^\flat|$$ between the underlying sets of a perfectoid space $\CX$ over $K$ and its tilt $\CX^\flat$.

  \begin{lem}\label{tiltmor}
  (1)  Let $\phi:R\to S$ be a morphism between perfectoid $K$-algebras, and 
   $\phi^\flat:R^\flat\to S^\flat$ its tilt. Then for every  $f\in R^\flat$, we have $\phi^\flat(f)^\sharp=\phi(f^\sharp).$
     
  (2) Let $\Phi:\CX\to\CY$ be a morphism between perfectoid spaces over $K$ and $\Phi^\flat$ its tilt. Then as maps between topological spaces, we have $$\rho_\CY\circ \Phi=\Phi^\flat\circ \rho_{\CX}$$ 
        \end{lem}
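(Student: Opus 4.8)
The plan is to reduce everything to the affinoid case, where both statements become formal consequences of the concrete description of the tilt as an inverse limit together with functoriality of tilting. For part (1): by \cite[Proposition 5.17]{Sch12} we have multiplicative homeomorphisms $R^\flat \cong \vpl_{x\mapsto x^p} R$ and $S^\flat \cong \vpl_{x\mapsto x^p} S$, compatibly with any $K$-algebra morphism $\phi$, in the sense that $\phi^\flat$ is simply $\vpl \phi$ applied componentwise (this compatibility is precisely how $\phi^\flat$ is constructed under the equivalence of \cite[Theorem 5.2]{Sch12}). Writing $f = (f_0, f_1, f_2, \dots)$ with $f_{i+1}^p = f_i$ and $f^\sharp = f_0$, we get $\phi^\flat(f) = (\phi(f_0), \phi(f_1), \dots)$, whose first component is $\phi(f_0) = \phi(f^\sharp)$. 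Hence $\phi^\flat(f)^\sharp = \phi(f^\sharp)$. First I would make sure to state which normalization of $\phi^\flat$ we are using — that it is the image of $\phi$ under the category equivalence — so that this componentwise description is legitimate rather than something needing proof.

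For part (2), I would first treat the affinoid case $\CX = \Spa(R,R^+)$, $\CY = \Spa(S,S^+)$ with $\Phi$ induced by a morphism $\psi : (S,S^+) \to (R,R^+)$ of perfectoid affinoid $K$-algebras, and $\Phi^\flat$ induced by $\psi^\flat : (S^\flat, S^\fpl) \to (R^\flat, R^\fpl)$. Unwinding definitions: for $x \in \Spa(R,R^+)$ and $f \in S^\flat$, the valuation $\rho_\CY(\Phi(x))$ evaluated at $f$ is $|f(\rho_\CY(\Phi(x)))| = |f^\sharp(\Phi(x))| = |\psi(f^\sharp)(x)|$, while $(\Phi^\flat(\rho_\CX(x)))$ evaluated at $f$ is $|\psi^\flat(f)(\rho_\CX(x))| = |\psi^\flat(f)^\sharp(x)|$. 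These agree by part (1) applied to $\psi$, namely $\psi^\flat(f)^\sharp = \psi(f^\sharp)$. Since $f \in S^\flat$ was arbitrary, $\rho_\CY(\Phi(x)) = \Phi^\flat(\rho_\CX(x))$ as points of $\Spa(S^\flat, S^\fpl)$.

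For the global statement I would cover $\CY$ by affinoid perfectoid opens $\CV = \Spa(S,S^+)$ and $\Phi^{-1}(\CV)$ by affinoid perfectoid opens $\CU = \Spa(R,R^+)$, so that $\Phi$ restricts to an affinoid morphism $\CU \to \CV$; the tilting equivalence on perfectoid spaces (\cite[Proposition 6.17]{Sch12}) identifies $\CV^\flat$ with $\Spa(S^\flat,S^\fpl)$ and $\CU^\flat$ with $\Spa(R^\flat,R^\fpl)$, and $\Phi^\flat$ restricts accordingly. The maps $\rho_\CX$, $\rho_\CY$ were defined by glueing exactly these affinoid $\rho$'s, so the identity $\rho_\CV \circ \Phi|_\CU = \Phi^\flat|_{\CU^\flat} \circ \rho_\CU$ from the affinoid case glues to the asserted equality of maps $|\CX| \to |\CY^\flat|$. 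The only point requiring a little care — and the one I'd expect to be the main (though still minor) obstacle — is checking that these identifications of affinoid tilts are compatible with the glueing data, i.e. that $\rho$ is genuinely functorial for open immersions of affinoid perfectoids; this is where one invokes that rational subsets and their tilts correspond under $\rho$, which is part of the content of \cite[Lemma 6.2, Proposition 6.17]{Sch12}.
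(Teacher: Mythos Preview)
Your argument is correct and follows exactly the route the paper takes; the paper's proof is simply the two-line summary ``(1) follows from the definition of the $\sharp$-map and \cite[Theorem 5.2]{Sch12}; (2) follows from (1)'', and you have spelled out precisely what those sentences mean. Your added care about glueing in the global case is fine but already implicit in how $\rho_\CX$ is defined by glueing affinoid $\rho$'s via \cite[Lemma 6.2, Proposition 6.17]{Sch12}.
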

        \begin{proof} (1) follows from    the definition of  the $\sharp$-map  and  \cite[Theorem 5.2]{Sch12}. (2) follows from (1).        \end{proof}
        By (2), the restriction of    $\rho_\CX $ to $\CX(K,K^\circ)$ 
gives  the functorial  bijection $\CX(K,K^\circ)\cong \CX^\flat(K^\flat,K^{\flat\circ})$,
which we also denote by  $ \rho_\CX$.        
In the next two paragraphs, we compute $ \rho_\CX$ in two cases.
            
       \subsubsection{Tilting and reduction}\label{subTilting and reduction} 
Let  $(R,R^+)$  be   a perfectoid affinoid $K $-algebra  and $(R^\flat,R^\fpl)$
 its tilt. Suppose there exists    a flat $W$-algebra  $S$  such that 
  \begin{itemize}
  \item[1] $R^+ $ is  the $\varpi$-adic completion of $S\otimes_W K^\circ$,
  \item[2]  $R^\fpl  $   is the $\varpi^\flat$-adic completion of $S_k  \otimes_k K^{\flat\circ}$. 
  
\end{itemize}  
               Let $\phi:S\to W$ be a $W$-algebra morphism, $\phi_k:S_ k\to k$ be its base change. Then  $\phi $ induces a map $\psi:R^+ \to K^\circ$  which further induces a  point  $x$ of $\Spa(R,R^+)$. Similarly,  
     $\phi_k $ induces  a map $\psi':R^\fpl \to K^\fcc$  which further induces  a   point  $x'$  of $\Spa(R^\flat,R^\fpl)$.
Then $\psi/\varpi
= \psi'/\varpi^\flat$  under the isomorphism $R^+/\varpi\cong R^\fpl/\varpi^\flat.$ By   \cite[Theorem 5.2]{Sch12},  $\phi'$ is the tilt of $\phi$  
and thus we have the following lemma.      \begin{lem}\label{tiltred2}     
We have  $\rho_{\Spa(R,R^+)}(x)=x'$. 
\end{lem}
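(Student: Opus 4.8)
\textbf{Proof proposal for Lemma \ref{tiltred2}.}

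The plan is to unwind the definition of $\rho$ from \ref{srho} and check it agrees with the point $x'$ coming from $\phi_k$, using only the compatibility of tilting with the two mod-$\varpi$ reductions provided by hypotheses 1 and 2. First I would recall that $x \in \Spa(R,R^+)$ is the valuation attached to the composite $R^+ \to K^\circ \to K^\circ/\mathfrak{m}$-type data — more precisely, $\psi: R^+ \to K^\circ$ is a bounded $K^\circ$-algebra map (obtained from $\phi: S \to W$ by base change along $W \to K^\circ$ and $\varpi$-adic completion, using hypothesis 1), and $x$ is the valuation $|f(x)| = \|\psi(f)\|_K$ on $R = R^+[\tfrac1\varpi]$. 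Similarly $x'$ is the valuation $|g(x')| = \|\psi'(g)\|_{K^\flat}$ on $R^\flat$, where $\psi': R^\fpl \to K^\fcc$ comes from $\phi_k: S_k \to k$ via hypothesis 2. By the definition of $\rho$, to prove $\rho_{\Spa(R,R^+)}(x) = x'$ it suffices to show $|g^\sharp(x)| = |g(x')|$ for every $g \in R^\flat$, i.e. $\|\psi(g^\sharp)\|_K = \|\psi'(g)\|_{K^\flat}$.

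The key step is to identify the tilt of the algebra map $\psi$. By \cite[Theorem 5.2]{Sch12}, a morphism of perfectoid affinoid $K$-algebras $\psi: (R,R^+) \to (K,K^\circ)$ tilts to a morphism $\psi^\flat: (R^\flat,R^\fpl) \to (K^\flat,K^\fcc)$, and the tilt is characterized by the induced map on $\varpi$-reductions: the square relating $R^+/\varpi$, $K^\circ/\varpi$, $R^\fpl/\varpi^\flat$, $K^\fcc/\varpi^\flat$ (the vertical isomorphisms being the canonical ones from the perfectoid structure) commutes. Now hypotheses 1 and 2 say precisely that $R^+/\varpi \cong (S \otimes_W K^\circ)/\varpi \cong S_k \otimes_k (K^\circ/\varpi)$ and $R^\fpl/\varpi^\flat \cong S_k \otimes_k (K^\fcc/\varpi^\flat)$, and under the standard identification $K^\circ/\varpi \cong K^\fcc/\varpi^\flat$ these two reductions are literally the same $S_k$-algebra. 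Under this identification $\psi/\varpi$ is $\phi_k \otimes \mathrm{id}$ and $\psi'/\varpi^\flat$ is also $\phi_k \otimes \mathrm{id}$; hence $\psi/\varpi = \psi'/\varpi^\flat$, which is exactly the displayed equation preceding the lemma. By the characterization of tilts of morphisms, this forces $\psi^\flat = \psi'$, i.e. $\psi'$ is the tilt of $\psi$.

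Finally I would conclude: for $g \in R^\flat$, applying Lemma \ref{tiltmor}(1) to the morphism $\psi$ (with $\phi = \psi$, $\phi^\flat = \psi'$) gives $\psi(g^\sharp) = \psi'(g)^\sharp$, where on the right $\sharp$ is taken in $K$. Taking norms, $\|\psi(g^\sharp)\|_K = \|\psi'(g)^\sharp\|_K = \|\psi'(g)\|_{K^\flat}$, the last equality because $f \mapsto f^\sharp: K^\flat \to K$ is norm-preserving (the tilting isomorphism of valued fields). Therefore $|g^\sharp(x)| = |g(x')|$ for all $g$, so $\rho_{\Spa(R,R^+)}(x) = x'$ by the defining property of $\rho$. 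The main obstacle is the bookkeeping in the middle step — being careful that the two a priori different descriptions of the mod-$\varpi$ reduction coming from hypotheses 1 and 2 are genuinely identified by the canonical perfectoid isomorphism, and that the map $\phi_k$ is recovered on both sides; once that is pinned down, invoking \cite[Theorem 5.2]{Sch12} and Lemma \ref{tiltmor} is formal.
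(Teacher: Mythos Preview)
Your proposal is correct and follows essentially the same approach as the paper: the paper's entire argument (given in the two sentences preceding the lemma statement) is exactly that $\psi/\varpi = \psi'/\varpi^\flat$ under $R^+/\varpi \cong R^\fpl/\varpi^\flat$, whence by \cite[Theorem 5.2]{Sch12} $\psi'$ is the tilt of $\psi$, which is precisely your middle paragraph. Your additional unwinding via Lemma \ref{tiltmor}(1) and the norm-preserving property of $\sharp$ on $K^\flat$ just makes explicit the final implication the paper leaves tacit.
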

\subsubsection{An example: the perfectoid closed unit disc}\label{CGperf}

Let $R=K\pair{T^{1/p^\infty},T^{-1/p^\infty}}$, the $\varpi^\flat$-adic completion of  $\bigcup\limits_{r\in\BZ_{\geq 0}} K [ T^{1/p^r}, T^{-1/p^r}].$   Then $R$ is perfectoid.  The tilt $R^\flat$ of $R$ is  $K^\flat\pair{T^{1/p^\infty},T^{-1/p^\infty}}$.
Let  $ \CG^\perf:=\Spa (R,R^\circ) $. Then   $\CG^\perf$  is a    perfectoid space over $\Spa(K,K^\circ)$, and 
 $ \CG^{\perf,\flat}:=\Spa(R^\flat, R^\fcc)$ is its tilt.
 
      Let $c \in \BZ_p$, and $m\in  \BZ_{\geq 0}$. 
The $K^\circ$-morphism $R ^\circ\to K^\circ$ defined by $$T ^{1/p^n}\to \mu_{p^{m+n}}^c$$
  gives a point $x\in \CG^\perf(K,K^\circ)$.
The $K^\fcc$-morphism $R^\perf \to K^\flat$ defined by $$T ^{1/p^n}\to (1+t^{1/p^{m+n}})^c$$
   gives a point $x'\in \CG^{\perf,\flat}(K^\flat,K^\fcc)$.  
   The following lemma  follows  from \eqref{2.3.2eq} and  \cite[Theorem 5.2]{Sch12}.
   \begin{lem} \label{2316} We have $\rho_{\CG^\perf}(x)=x'$.  
   
   \end{lem}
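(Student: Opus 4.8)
The plan is to tilt the morphism that defines $x$ and to identify its tilt with the morphism that defines $x'$. Write $\theta\colon R\to K$ for the continuous morphism of perfectoid $K$-algebras defining $x$, so that $\theta(T^{1/p^n})=\mu_{p^{m+n}}^c$, and write $\theta'\colon R^\flat\to K^\flat$, with $\theta'(T^{1/p^n})=(1+t^{1/p^{m+n}})^c$, for the one defining $x'$. By the remark following the proof of Lemma \ref{tiltmor}, the restriction of $\rho_{\CG^\perf}$ to $\CG^\perf(K,K^\circ)$ is the functorial tilting bijection, so $\rho_{\CG^\perf}(x)$ is the point of $\CG^{\perf,\flat}(K^\flat,K^\fcc)$ attached to the tilt $\theta^\flat\colon R^\flat\to K^\flat$ of $\theta$. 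Since $R^\flat=K^\flat\pair{T^{1/p^\infty},T^{-1/p^\infty}}$ is topologically generated over $K^\flat$ by the $T^{\pm 1/p^n}$, I would reduce the lemma to proving $\theta^\flat(T^{1/p^n})=(1+t^{1/p^{m+n}})^c$ for every $n\geq 0$; these equalities force $\theta^\flat=\theta'$.

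The first step would be the identity $(1+t^{1/p^j})^\sharp=\mu_{p^j}$ in $K$, for every $j\geq 0$ (with the convention $\mu_{p^0}=1$). By \cite[Proposition 5.17]{Sch12}, the multiplicative homeomorphism $K^\flat\cong\vpl\limits_{x\mapsto x^p}K$ sends $h$ to $\bigl((h^{1/p^k})^\sharp\bigr)_{k\geq 0}$, and $(h^{1/p^k})^\sharp=\lim_{k'\to\infty}a_{k'}^{p^{k'}}$ for any $a_{k'}\in K^\circ$ reducing, under the isomorphism $K^\circ/\varpi\cong K^\fcc/\varpi^\flat$ of \eqref{2.3.2eq}, to the class of $h^{1/p^{k+k'}}$ modulo $\varpi^\flat$. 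For $h=1+t^{1/p^j}$ the characteristic-$p$ identity gives $h^{1/p^k}=1+t^{1/p^{j+k}}$, and by \eqref{2.3.2eq} the class of $1+t^{1/p^{j+k+k'}}$ modulo $\varpi^\flat$ corresponds to that of $\mu_{p^{j+k+k'}}$ modulo $\varpi$; choosing $a_{k'}=\mu_{p^{j+k+k'}}$ and using $\mu_{p^{j+k+k'}}^{p^{k'}}=\mu_{p^{j+k}}$ yields $(h^{1/p^k})^\sharp=\mu_{p^{j+k}}$, in particular $h^\sharp=\mu_{p^j}$. Since $\sharp$ is multiplicative and continuous and $1+t^{1/p^j}$ is a one-unit, raising to the power $c$ (a $p$-adic limit of integer powers) then gives $\bigl((1+t^{1/p^j})^c\bigr)^\sharp=\mu_{p^j}^c$, where $\mu_{p^j}^c$ means $\mu_{p^j}^{c\bmod p^j}$.

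Next I would combine this with Lemma \ref{tiltmor}(1), which gives $\theta^\flat(g)^\sharp=\theta(g^\sharp)$ for all $g\in R^\flat$. Taking $g=T^{1/p^{n+k}}$, and using $(T^{1/p^{n+k}})^\sharp=T^{1/p^{n+k}}$ in $R$ together with the fact that $\theta^\flat$ is a ring homomorphism into the perfect field $K^\flat$, so that $\theta^\flat(T^{1/p^{n+k}})=\theta^\flat(T^{1/p^n})^{1/p^k}$, I obtain $\bigl(\theta^\flat(T^{1/p^n})^{1/p^k}\bigr)^\sharp=\theta(T^{1/p^{n+k}})=\mu_{p^{m+n+k}}^c$ for every $k\geq 0$. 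On the other hand, the $p^k$-th root in $K^\flat$ of $(1+t^{1/p^{m+n}})^c$ is $(1+t^{1/p^{m+n+k}})^c$ (by $(1+y)^{p^k}=1+y^{p^k}$ and $(z^c)^{p^k}=(z^{p^k})^c$), so by the first step $(1+t^{1/p^{m+n}})^c$ satisfies the identical system $\bigl((1+t^{1/p^{m+n+k}})^c\bigr)^\sharp=\mu_{p^{m+n+k}}^c$. As the homeomorphism $K^\flat\cong\vpl\limits_{x\mapsto x^p}K$ shows that an element is determined by the sequence of $\sharp$-images of its $p$-power roots, the two elements coincide, giving $\theta^\flat(T^{1/p^n})=(1+t^{1/p^{m+n}})^c$ and hence $\rho_{\CG^\perf}(x)=x'$.

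The main obstacle is that one cannot finish with a single application of Lemma \ref{tiltmor}(1): the map $\sharp\colon K^\flat\to K$ is far from injective once $K$ contains all $p$-power roots of unity — indeed $1+t\neq 1$ but $(1+t)^\sharp=1=1^\sharp$ — so knowing $\theta^\flat(T^{1/p^n})^\sharp$ alone does not pin down $\theta^\flat(T^{1/p^n})$, and one must control all $p$-power roots simultaneously by passing through $\vpl\limits_{x\mapsto x^p}K$. A smaller point is the treatment of the $p$-adic exponent $c$, which rests on the continuity of $\sharp$ and on the group of one-units in $K^\flat$ being a $\BZ_p$-module.
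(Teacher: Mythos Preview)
Your proof is correct and follows essentially the same approach as the paper, which simply records that the lemma ``follows from \eqref{2.3.2eq} and \cite[Theorem 5.2]{Sch12}.'' You have carefully unpacked what that one-line citation means: the explicit isomorphism \eqref{2.3.2eq} provides the identity $(1+t^{1/p^j})^\sharp=\mu_{p^j}$, and the tilting equivalence of \cite[Theorem 5.2]{Sch12} (via its concrete incarnation $K^\flat\cong\varprojlim_{x\mapsto x^p}K$) is what lets you conclude $\theta^\flat=\theta'$ from the compatible system of $\sharp$-images. Your remark that a single application of $\sharp$ is insufficient because $\sharp$ is not injective is well taken and is exactly why the full inverse-limit description is needed; this subtlety is hidden in the paper's citation of Theorem 5.2.
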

   
Similar result holds for 
    $\CG^{l,\perf}=\Spa(R,R^\circ)$ where $$R=K\pair{T_1^{1/p^\infty},T_1^{-1/p^\infty},...,T_l^{1/p^\infty},T_l^{-1/p^\infty}},$$ 
and its tilt    $\CG^{l,\perf\flat}=\Spa(R^\flat,R^\fcc)$ where $$R^\flat=K^\flat\pair{T_1^{1/p^\infty},T_1^{-1/p^\infty},...,T_l^{1/p^\infty},T_l^{-1/p^\infty}}.$$

  \subsection{A variant of Scholze's approximation lemma}\label{some estimates on  perfectoid spaces}
  The perfectoid fields $K$, $K^\flat$ and related notations are as in \ref{perftheory}.
  Let $(R,R^+)$ be a perfectoid  affinoid $(K,K^\circ)$-algebra with tilt $(R^\flat,R^\fpl)$.
Let $\CX=\Spa(R,R^+)$ with tilt
  $\CX^\flat=\Spa(R^\flat,R^\fpl)$. 
  For $f,g\in R$, define $|f(x)-g (x)| $   to be $|(f-g)(x)|$.
The following approximation lemma plays an important role in Scholze's work \cite{Sch12}.
 
\begin{lem}[{\cite[Corollary 6.7 (1)]{Sch12}}]\label{Corollary 6.7. (1)} Let $f\in R^+$.
Then   for every $ c\geq 0$, there exists 
  $g\in R^\fpl$   such that  for every $x\in \CX$, we have
  \begin{equation}|f(x)-g^\sharp(x)|  \leq\|\varpi\|^{\frac{1}{p}} \max\{|f(x)|,\|\varpi\|^c\} =\|\varpi\|^{\frac{1}{p}} \max\{|g^\sharp(x)|,\|\varpi\|^c\}.\label{2.1}\end{equation}    \end{lem}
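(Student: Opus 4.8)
The plan is to obtain the statement from \cite[Corollary 6.7 (1)]{Sch12}, of which it is essentially a restatement under the normalization fixed by Assumption \ref{norm}, and then to deduce the asserted equality formally. Concretely I would proceed in three steps: (i) recall (or reprove) the approximation bound of \cite{Sch12} in the form $|f(x)-g^\sharp(x)|\le|\varpi(x)|^{1/p}\max\{|f(x)|,|\varpi(x)|^c\}$ for a suitable $g\in R^\fpl$ and all $x\in\Spa(R,R^+)$; (ii) use Assumption \ref{norm} to replace $|\varpi(x)|$ by the constant $\|\varpi\|$; (iii) use the ultrametric inequality to pass from the inequality to the equality on the right of \eqref{2.1}.

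For step (i), recall that tilting of the affinoid pair $(R,R^+)$ (\cite[Lemma 6.2, Proposition 6.17]{Sch12}) identifies $R^\fpl$ with $\varprojlim_{x\mapsto x^p}R^+\cong\varprojlim_{\Phi}R^+/\varpi$ (limit along Frobenius $\Phi$) in such a way that $g\mapsto g^\sharp\bmod\varpi$ is the projection to the first factor; in particular it surjects onto $R^+/\varpi$. Given $f\in R^+$, I would lift $f\bmod\varpi$ to $g_0\in R^\fpl$ with $g_0^\sharp\equiv f\pmod\varpi$, write $f-g_0^\sharp\in\varpi R^+$, lift the next term, and iterate, assembling a correction series in $R^\fpl$. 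The hard point --- and the main obstacle --- is that $\sharp$ is only multiplicative, not additive, so this series does not telescope on the nose; what rescues the argument is that $\sharp$ is additive \emph{modulo $\varpi$}, since Frobenius is additive on the characteristic-$p$ ring $R^\fpl$ and $(a+b)^{p^n}\equiv a^{p^n}+b^{p^n}\pmod{pR^+}$ while $p\in\varpi R^+$ (indeed $\|p\|=\|\varpi\|^{p-1}$). Controlling this defect $\varpi^\flat$-adically yields a convergent series with limit $g\in R^\fpl$ and $|f(x)-g^\sharp(x)|\le|\varpi(x)|$ for all $x$; the sharper bound with the factor $|\varpi(x)|^{1/p}\max\{|f(x)|,|\varpi(x)|^c\}$ comes from running the same construction after rescaling $f$ by a power of $(\varpi^\flat)^\sharp$ on the locus where $|f|$ is large and truncating at level $c$ where $|f|$ is small, exactly the bookkeeping in the proof of \cite[Corollary 6.7]{Sch12}. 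I would simply cite this rather than reproduce it.

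For step (ii), since $(R,R^+)$ is a perfectoid affinoid $K$-algebra we have $K^\circ\subset R^+$, so Assumption \ref{norm} applies and $|\varpi(x)|=\|\varpi\|$ for every $x\in\Spa(R,R^+)$; hence $|\varpi(x)|^{1/p}=\|\varpi\|^{1/p}$ and step (i) is precisely the first inequality of \eqref{2.1}. For step (iii), fix $x$. If $|f(x)|>\|\varpi\|^c$ then $|f(x)-g^\sharp(x)|\le\|\varpi\|^{1/p}|f(x)|<|f(x)|$, so $|g^\sharp(x)|=|f(x)|$ and both maxima equal $|f(x)|$; if $|f(x)|\le\|\varpi\|^c$ then $|g^\sharp(x)|\le\max\{|f(x)|,|f(x)-g^\sharp(x)|\}\le\|\varpi\|^c$ (using $\|\varpi\|^{1/p}<1$), so both maxima equal $\|\varpi\|^c$. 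In either case $\max\{|f(x)|,\|\varpi\|^c\}=\max\{|g^\sharp(x)|,\|\varpi\|^c\}$, which completes the proof.
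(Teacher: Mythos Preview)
Your proposal is correct and aligns with the paper's treatment: the paper gives no proof of this lemma at all --- it is stated with the citation \cite[Corollary 6.7 (1)]{Sch12} in the header and used as a black box. Your step (i) amounts to the same citation, while your steps (ii) and (iii) spell out two points the paper leaves implicit (the passage from $|\varpi(x)|$ to $\|\varpi\|$ via Assumption \ref{norm}, and the ultrametric argument yielding the equality of the two maxima); both are routine and carried out correctly.
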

  Here   the map    $\sharp$ is as in \ref{srho} (i.e. $|g(\rho(x))|=|g^\sharp(x)|$), and  we use $\|\cdot\|$ to denote $\|\cdot\|_K$.

  Recall that   $k=\bar \BF_p$.
Assume that there exists  
 a  $k$-algebra $S$, such that  $ R^{\flat+}$ is the $\varpi^\flat$-adic completion of $S \otimes K^{\flat\circ}$.
Then we have natural maps\begin{equation*}\Hom_k(S,k)\incl \Hom_ {K^{\flat\circ}}( S \otimes K^{\flat\circ},K^{\flat\circ})  \cong \CX^\flat(K^\flat,K^{\flat\circ}) .\end{equation*} 
Thus we regard $  (\Spec S)(k) $ as a subset of $ \CX^\flat$.
   
 \begin{lem}\label{improveCorollary 6.7. (1)} Continue to use the notations  in Lemma \ref{Corollary 6.7. (1)}. Assume  that
  $ c\in \BZ[\frac{1}{p}] $. There exists 
  a finite sum   $$g_c= \sum_{\substack{i\in \BZ[\frac{1}{p}]_{\geq 0},\\i< \frac{1}{p}+c}}g_{c,i}  \cdot (\varpi^\flat)^i $$ with $g_{c,i}\in S$ and only finitely many $g_{c,i}\neq 0$,
     such that    \begin{equation}g-g_c\in (\varpi^\flat)^{\frac{1}{p}+c}R^{\flat+}.\label{modi}
  \end{equation} 

    \end{lem}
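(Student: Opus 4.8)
The plan is to reduce the statement to an elementary ``expansion in powers of $\varpi^\flat$'' argument after replacing $g$ by a polynomial-type approximant. Recall that $\varpi^\flat=t^{1/p}$ and $K^{\flat\circ}=k[[t^{1/p^\infty}]]$, so that every element of $K^{\flat\circ}$ has a unique expansion $\sum_{q\in\BZ[\frac{1}{p}]_{\geq0}}a_q(\varpi^\flat)^q$ with $a_q\in k$ such that, for each real number $A$, only finitely many $a_q$ with $q<A$ are nonzero; moreover $\frac{1}{p}+c\in\BZ[\frac{1}{p}]_{\geq0}$ precisely because $c\in\BZ[\frac{1}{p}]$ with $c\geq0$, so that $(\varpi^\flat)^{\frac{1}{p}+c}=t^{(\frac{1}{p}+c)/p}\in K^{\flat\circ}$.

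First I would produce the approximant: since the image of $S\otimes_k K^{\flat\circ}$ is dense in the $\varpi^\flat$-adically complete ring $R^{\flat+}$, I can fix an integer $n\geq\frac{1}{p}+c$ and choose $g'\in S\otimes_k K^{\flat\circ}$ with $g-g'\in(\varpi^\flat)^n R^{\flat+}\subseteq(\varpi^\flat)^{\frac{1}{p}+c}R^{\flat+}$. Next I would expand $g'$: writing $g'=\sum_l s_l\otimes\kappa_l$ as a finite sum with $s_l\in S$ and $\kappa_l\in K^{\flat\circ}$, expanding each $\kappa_l=\sum_{i\in\BZ[\frac{1}{p}]_{\geq0}}a_{l,i}(\varpi^\flat)^i$ as above, and setting $g_{c,i}:=\sum_l a_{l,i}s_l\in S$, I obtain $g'=\sum_{i\in\BZ[\frac{1}{p}]_{\geq0}}g_{c,i}(\varpi^\flat)^i$, where for each real $A$ only finitely many $g_{c,i}$ with $i<A$ are nonzero (inherited from the finitely many $\kappa_l$).

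Then I would take the low-order part: set $g_c:=\sum_{i\in\BZ[\frac{1}{p}]_{\geq0},\,i<\frac{1}{p}+c}g_{c,i}(\varpi^\flat)^i$, which by the finiteness just noted is a finite sum involving only finitely many nonzero $g_{c,i}\in S$ and is of the required shape. It then remains to bound $g'-g_c=\sum_l s_l\otimes\kappa_l'$ with $\kappa_l':=\sum_{i\geq\frac{1}{p}+c}a_{l,i}(\varpi^\flat)^i$; here the hypothesis $c\in\BZ[\frac{1}{p}]$ enters again, since it forces every exponent $i-(\frac{1}{p}+c)$ occurring in $\kappa_l'$ to lie in $\BZ[\frac{1}{p}]_{\geq0}$, whence $(\varpi^\flat)^{i-(\frac{1}{p}+c)}\in K^{\flat\circ}$, so $\kappa_l'\in(\varpi^\flat)^{\frac{1}{p}+c}K^{\flat\circ}$ and therefore $g'-g_c\in(\varpi^\flat)^{\frac{1}{p}+c}(S\otimes_k K^{\flat\circ})\subseteq(\varpi^\flat)^{\frac{1}{p}+c}R^{\flat+}$. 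Adding $g-g'$ gives $g-g_c\in(\varpi^\flat)^{\frac{1}{p}+c}R^{\flat+}$, i.e., \eqref{modi}. I do not expect a real obstacle: the argument is purely formal once $K^{\flat\circ}=k[[t^{1/p^\infty}]]$ and $\varpi^\flat=t^{1/p}$ are used; the only steps that need a moment's care are choosing the approximation precision $n$ to be an honest integer $\geq\frac{1}{p}+c$, and recognizing that $c\in\BZ[\frac{1}{p}]$ is exactly what makes $\frac{1}{p}+c$ a legitimate monomial exponent and each tail monomial $(\varpi^\flat)^i$ with $i\geq\frac{1}{p}+c$ divisible by $(\varpi^\flat)^{\frac{1}{p}+c}$ inside $K^{\flat\circ}$.
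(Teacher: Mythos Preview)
Your proposal is correct and follows essentially the same approach as the paper: approximate $g$ by a finite sum $g'\in S\otimes_k K^{\flat\circ}$, expand each $K^{\flat\circ}$-coefficient in powers of $\varpi^\flat$, and truncate below $\frac{1}{p}+c$. Your write-up is in fact slightly more explicit than the paper's in justifying why the tail lies in $(\varpi^\flat)^{\frac{1}{p}+c}R^{\flat+}$ and in handling the approximation precision.
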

        \begin{proof}    There exists a \textit{finite  sum}
   $g'=\sum s_j a_j \in   S \otimes K^{\flat\circ}$, where $s_j\in S$ and $a_j\in K^\fcc$, such that 
    $g-g'\in (\varpi^\flat)^{\frac{1}{p}+c}R^{\flat+}.$ 
 Claim:  let $a\in K^{\flat\circ}$, then there exists a positive integer $N$   such that
  $$a-\sum_{\substack{h\in (\frac{\BZ}{ p^N})_{\geq 0},\\h< \frac{1}{p}+c }} \alpha_h \cdot (\varpi^\flat)^h \in (\varpi^\flat)^{\frac{1}{p}+c}K^{\fcc}$$   for certain $\alpha_h\in  k$. Indeed,  the claim  follows from that $K^{\fcc}$ is the $\varpi^\flat$-adic completion of $ \bigcup\limits_{n=1}^\infty k[[t]][  (\varpi^\flat)^{1/p^n}]$.
Note that  $\{h\in (\frac{\BZ}{ p^N})_{\geq 0}, h< \frac{1}{p}+c \}$ is finite set.
So there exists a  finite sum  $$g_c=\sum_{\substack{i\in \BZ[\frac{1}{p}]_{\geq 0},\\i< \frac{1}{p}+c}}g_{c,i}  \cdot (\varpi^\flat)^i $$ with $g_{c,i}\in S$ 
 such that 
$g'-g_c\in (\varpi^\flat)^{\frac{1}{p}+c}R^{\flat+}.$  
Then
$g-g_c\in (\varpi^\flat)^{\frac{1}{p}+c}R^{\flat+}.$            \end{proof}

        \begin{lem} \label{g=0}Let $g_c$ be as in   Lemma \ref{improveCorollary 6.7. (1)}   and  $x\in   (\Spec S)(k)$.
     Regarding $x\in \CX^\flat(K^\flat,K^\fcc)$ via the inclusion above.  If         $|g_c(x)| \leq  \|\varpi\|^{\frac{1}{p}+c}$, then $g_{c,i}(x)=0$ for all $i$. 
 
         \end{lem}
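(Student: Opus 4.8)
The plan is to evaluate $g_c$ at the point $x$ termwise and exploit that, because $x$ is a $k$-point of $\Spec S$, the values $g_{c,i}(x)$ lie in $k$ and hence have norm $0$ or $1$; a distinct-valuations (``isosceles triangle'') argument in $K^\flat$ then forces them all to vanish.

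First I would trace through the inclusion $(\Spec S)(k)\incl \CX^\flat(K^\flat,K^{\flat\circ})$ recalled just before Lemma \ref{improveCorollary 6.7. (1)}: the point $x$ is the image of a $k$-algebra homomorphism $S\to k$, which upon tensoring with $K^{\flat\circ}$ and completing yields a $K^{\flat\circ}$-algebra map $\chi\colon R^{\flat+}\to K^{\flat\circ}$; since each $g_{c,i}\in S$, its image $g_{c,i}(x)=\chi(g_{c,i})$ lies in $k\subset K^{\flat\circ}$, and $\chi$ fixes $\varpi^\flat$ (more generally each $(\varpi^\flat)^i$, $i\in\BZ[\frac1p]_{\geq0}$, lies in $K^{\flat\circ}$). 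Hence $g_c(x)=\sum_i g_{c,i}(x)\,(\varpi^\flat)^i$, a finite sum over $i\in\BZ[\frac1p]_{\geq0}$ with $i<\frac1p+c$. By Assumption \ref{norm} the valuation at $x$ restricts to $\|\cdot\|_{K^\flat}$ on $K^\flat$, so $|g_c(x)|=\|g_c(x)\|_{K^\flat}$, and by the normalization $\|\varpi^\flat\|_{K^\flat}=\|\varpi\|$ we get $\|(\varpi^\flat)^i\|_{K^\flat}=\|\varpi\|^i$.

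The main point is then elementary: for each $i$ either $g_{c,i}(x)=0$, or $g_{c,i}(x)$ is a nonzero element of $k$, hence a unit of $K^{\flat\circ}$ with $\|g_{c,i}(x)\|_{K^\flat}=1$. Consequently the nonzero terms $g_{c,i}(x)(\varpi^\flat)^i$ have pairwise distinct absolute values $\|\varpi\|^i$ (distinct since $0<\|\varpi\|<1$), so by the ultrametric inequality $\|g_c(x)\|=\|\varpi\|^{i_0}$, where $i_0$ is the least index with $g_{c,i_0}(x)\neq0$, should such an index exist. But $i_0<\frac1p+c$ and $\|\varpi\|<1$ give $\|g_c(x)\|=\|\varpi\|^{i_0}>\|\varpi\|^{\frac1p+c}$, contradicting the hypothesis $|g_c(x)|\leq\|\varpi\|^{\frac1p+c}$. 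Hence no such $i_0$ exists, i.e.\ $g_{c,i}(x)=0$ for all $i$.

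I do not expect a real obstacle here; the only step requiring care is the bookkeeping of the first paragraph — checking that the termwise expression $g_c(x)=\sum_i g_{c,i}(x)(\varpi^\flat)^i$ is literally the value of the valuation of $g_c$ at $x$ under the identifications preceding Lemma \ref{improveCorollary 6.7. (1)}, and that Assumption \ref{norm} legitimately permits computing $|g_c(x)|$ as the $K^\flat$-norm of this explicit element of $K^{\flat\circ}$.
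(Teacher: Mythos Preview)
Your proposal is correct and follows essentially the same approach as the paper's proof: both observe that $g_{c,i}(x)\in k$ forces $|g_{c,i}(x)|\in\{0,1\}$, take $i_0$ to be the minimal index with $g_{c,i_0}(x)\neq 0$, and conclude $|g_c(x)|=\|\varpi\|^{i_0}>\|\varpi\|^{\frac{1}{p}+c}$, a contradiction. The paper's version is just terser, omitting the bookkeeping you spell out in your first paragraph.
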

        
        \begin{proof} Since $x\in   (\Spec S)(k)$,  if $g_{c,i}(x)\neq 0$, then $|g_{c,i}(x)|=1$.  Let  $i_0< \frac{1}{p}+c$  be the minimal $i$ 
 such  that $|g_{c,i}(x)|=1$.
        Then $ |g_c(x)|=\|\varpi^\flat\|_{K^\flat}^{i_0}>   \|\varpi\|^{\frac{1}{p}+c}$, a contradiction.
        \end{proof}
     \subsubsection{Profinite setting}   Impose the following assumption.
    \begin{asmp} \label{asmp4}
There are  $k$-algebras
  $S_0\subset S_1\subset...$ such that $S=\bigcup S_n$.
  \end{asmp}
  Let $\CX_n $  is the adic generic fiber of $\Spec S_n\otimes K^\fcc$. Then we have a natural morphism
  $$\pi_n:\CX^\flat\to \CX_n.$$ We also use $\pi_n$ to denote the morphism $(\Spec S)(k)\to (\Spec S_n)(k)$. We have natural maps
   $$ (\Spec S_n)(k) \incl \Hom_{K^\fcc}(S_n\otimes K^\fcc, K^{\flat\circ})\cong \CX_n (K^\flat,K^{\flat\circ})  $$ by which 
     we regard $(\Spec S_n)(k)$  as a subset of $\CX_n$. 
For each $n$, let $\Lambda_n\subset  (\Spec S_n)(k) $ be a set of $k$-points, and    $\Lambda^\Zar_n$    the Zariski  closure of $\Lambda _n$ in $\Spec S_n$. 
We have the following maps and inclusions between sets:
$$|\CX|\xrightarrow{\rho}|\CX^\flat|\xrightarrow{\pi_n}|\CX_n| \supset \Lambda _n^\Zar  (k)   \supset \Lambda_n. $$
where $\rho$ is as in \ref{srho}.

 Let $f\in  R^+$, and $\Xi:=\{x\in  \CX :|f(x )|=0\}$. We have the following variant of  Lemma \ref {Corollary 6.7. (1)}.
   \begin{prop}\label{techprop2} Assume that $ \Lambda_n\subset \pi_n ( \rho( \Xi))$ for each $n$.
  Then for each $\ep\in K^\times$, there exists a positive integer $n$ such that   
$|f (x)|\leq \|\ep\|_K$ for every $x\in ( \pi_n\circ \rho)^{-1}\left(\Lambda _n^\Zar(k) \right)$.
 \end{prop}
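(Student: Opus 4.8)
The plan is to run a compactness argument across the finite levels $\CX_n$, using Lemma \ref{Corollary 6.7. (1)} to pass between $\CX$ and its tilt and Lemma \ref{improveCorollary 6.7. (1)} to descend to the $k$-algebras $S_n$. Fix $\ep\in K^\times$ and pick $c\in\BZ[\tfrac1p]_{\geq 0}$ with $\|\varpi\|^{\frac1p+c}\leq\|\ep\|_K$. Apply Lemma \ref{Corollary 6.7. (1)} to $f\in R^+$ with this $c$ to obtain $g\in R^\fpl$ satisfying \eqref{2.1}, and then Lemma \ref{improveCorollary 6.7. (1)} to obtain the finite sum $g_c=\sum_i g_{c,i}\cdot(\varpi^\flat)^i$ with $g_{c,i}\in S$ and $g-g_c\in(\varpi^\flat)^{\frac1p+c}R^{\flat+}$. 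Since only finitely many $g_{c,i}$ are nonzero and $S=\bigcup S_n$ by Assumption \ref{asmp4}, there is some $n$ with all the $g_{c,i}\in S_n$. I claim this $n$ works.

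Next I would translate the hypothesis $\Lambda_n\subset\pi_n(\rho(\Xi))$ into a vanishing statement for the $g_{c,i}$ on $\Lambda_n^\Zar$. Take any $y\in\Lambda_n$; choose $x\in\Xi$ with $\pi_n(\rho(x))=y$, so $|f(x)|=0$. Writing $x^\flat=\rho(x)\in\CX^\flat(K^\flat,K^{\flat\circ})$, the estimate \eqref{2.1} together with $|f(x)|=0$ gives $|g^\sharp(x)|\leq\|\varpi\|^{\frac1p+c}$ — here I should double-check the bookkeeping: \eqref{2.1} reads $|f(x)-g^\sharp(x)|\leq\|\varpi\|^{1/p}\max\{|f(x)|,\|\varpi\|^c\}$, and since $|f(x)|=0$ the right side is $\|\varpi\|^{\frac1p+c}$, forcing $|g^\sharp(x)|\leq\|\varpi\|^{\frac1p+c}$ as well. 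By the definition of $\sharp$ and $\rho$, $|g^\sharp(x)|=|g(x^\flat)|$, and then $g-g_c\in(\varpi^\flat)^{\frac1p+c}R^{\flat+}$ yields $|g_c(x^\flat)|\leq\|\varpi\|^{\frac1p+c}$. Since $x^\flat$ lies over the $k$-point $y\in(\Spec S_n)(k)\subset(\Spec S)(k)$, its value on $g_c$ is computed from $y$, so $|g_c(y)|\leq\|\varpi\|^{\frac1p+c}$ as an element of $\CX(K^\flat,K^\fcc)$; Lemma \ref{g=0} then gives $g_{c,i}(y)=0$ for all $i$. As $y\in\Lambda_n$ was arbitrary, every $g_{c,i}$ vanishes on $\Lambda_n$, hence on its Zariski closure $\Lambda_n^\Zar$, hence $g_{c,i}(z)=0$ for all $z\in\Lambda_n^\Zar(k)$.

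Finally I would propagate this back up to $\CX$. Let $x\in(\pi_n\circ\rho)^{-1}(\Lambda_n^\Zar(k))$, set $z=\pi_n(\rho(x))\in\Lambda_n^\Zar(k)$, and let $x^\flat=\rho(x)$. Since the $g_{c,i}\in S_n$, their values at $x^\flat$ factor through $\pi_n(x^\flat)=z$, so $|g_{c,i}(x^\flat)|=|g_{c,i}(z)|=0$ for all $i$; therefore $|g_c(x^\flat)|=0$, and $g-g_c\in(\varpi^\flat)^{\frac1p+c}R^{\flat+}$ gives $|g(x^\flat)|\leq\|\varpi\|^{\frac1p+c}$. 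By $|g(x^\flat)|=|g^\sharp(x)|$ and the second equality in \eqref{2.1}, $|f(x)-g^\sharp(x)|\leq\|\varpi\|^{1/p}\max\{|g^\sharp(x)|,\|\varpi\|^c\}\leq\|\varpi\|^{\frac1p+c}$, and combining with $|g^\sharp(x)|\leq\|\varpi\|^{\frac1p+c}$ via the ultrametric inequality gives $|f(x)|\leq\|\varpi\|^{\frac1p+c}\leq\|\ep\|_K$, as desired.

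The main obstacle I anticipate is not any single estimate but the careful tracking of which ring each element lives in and which points are where: the interplay between $R^+$, $R^{\flat+}$, $S$, and the $S_n$, and the identifications $(\Spec S_n)(k)\subset\CX_n(K^\flat,K^{\flat\circ})$ versus $(\Spec S)(k)\subset\CX^\flat(K^\flat,K^{\flat\circ})$ used when invoking Lemma \ref{g=0}. In particular one must ensure that Lemma \ref{g=0}'s hypothesis — that $x$ is a $k$-point, so nonzero values of $g_{c,i}$ have absolute value exactly $1$ — is legitimately available at the points coming from $\Lambda_n$ and that the value $|g_c(\cdot)|$ is the same whether computed in $\CX^\flat$ or after projecting to $\CX_n$; this is exactly where the compatibility of $\pi_n$ with the $k$-point structure and Assumption \ref{asmp4} are used. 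Once those identifications are pinned down, the three displayed inequalities above are routine applications of the ultrametric inequality.
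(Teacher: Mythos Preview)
Your proposal is correct and follows essentially the same approach as the paper's proof: choose $c$ so that $\|\varpi\|^{1/p+c}\leq\|\ep\|$, approximate $f$ by $g_c$ via Lemmas~\ref{Corollary 6.7. (1)} and~\ref{improveCorollary 6.7. (1)}, descend the finitely many coefficients $g_{c,i}$ to some $S_n$, use the hypothesis and Lemma~\ref{g=0} to show each $g_{c,i}$ vanishes on $\Lambda_n$ and hence on $\Lambda_n^\Zar$, and then run the estimates backwards. Your bookkeeping is in fact slightly cleaner than the paper's: the paper applies Lemma~\ref{g=0} at the point $\rho(y)\in\CX^\flat$ by asserting ``$\rho(y)\in(\Spec S)(k)$,'' which is not literally guaranteed by the hypotheses, whereas you correctly observe that since $g_{c,i}\in S_n$ the relevant values factor through $\pi_n$ and hence through the genuine $k$-point $y\in(\Spec S_n)(k)$, so the argument of Lemma~\ref{g=0} applies there.
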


        \begin{proof}  
 Choose $c\in \BZ _{\geq 0}$ large enough such that $ \|\varpi\|_K^{\frac{1}{p}+c}\leq \|\ep\|_K $,   choose $g$ as in Lemma \ref{Corollary 6.7. (1)} and choose a finite sum  $$ g_c=\sum\limits_{\substack{i\in \BZ[\frac{1}{p}]_{\geq 0},\\i< \frac{1}{p}+c}}g_{c,i}   \cdot (\varpi^\flat)^i $$
  as in Lemma \ref{improveCorollary 6.7. (1)} where $g_{c,i}\in S$ for all $i$.
        There  exists a positive integer  $n(c)$ such that $g_{c,i}\in S_{n(c)}$ for all $i$ by the finiteness of the sum.    By the assumption,  every element   $x\in  \Lambda_{n(c)}$   can be  written as $\pi_{n(c)} \circ \rho (y)$ where $y\in \Xi$. By \eqref{2.1} and  \eqref{modi},   $|g_c(\rho(y) )|\leq  \|\varpi\|^{\frac{1}{p}+c}$. 
                Then by Lemma \ref{g=0} and that   $\rho(y)\in (\Spec S)(k)$, $g_{c,i}(\rho(y))=0$. Since
      $g_{c,i}\in S_{n(c)}$,  $g_{c,i}(x)=0$.
     Thus  $g_{c,i}$ lies in the ideal defining $\Lambda_{n(c)}^\Zar$.
      So   $g_{c,i}(x)=0$, and thus $g_c(x)=0$, for every $x\in \Lambda _{n(c)}^\Zar(k) $. By \eqref{2.1} and   \eqref{modi},  for every  $x\in \Lambda _{n(c)}^\Zar(k) $, we have $$|f\left(\rho^{-1}\left(\pi_{n(c)} ^{-1}(x)\right)\right)|\leq  \|\varpi\|^{\frac{1}{p}+c}\leq\| \ep\|.$$ \end{proof}

\section{Perfectoid universal cover of an abelian scheme}\label{The perfectoid universal cover of a formal abelian scheme}
Let  $K$  be the  perfectoid field in \ref{perftheory} and  $K^\flat$ its tilt. Let $\fA$ be a formal abelian scheme  over $K^\circ$.
We   first recall the perfectoid universal cover of $\fA$ and its tilt constructed in  \cite[Lemme A.16]{PB}.
Then we study the relation between tilting and reduction.
\subsection{Perfectoid universal cover of an abelian scheme}\label{subThe Perfectoid universal cover of a formal abelian scheme}

 Let    $\fA'$  be a formal abelian  scheme  over $\Spf K^{\flat\circ} $. Assume that there is an isomorphism  \begin{equation}\fA\otimes   K^\circ/\varpi\cong \fA'\otimes   K^{\flat\circ}/\varpi^\flat\label{modpicond}\end{equation}    of abelian schemes over $K^\circ/\varpi\cong K^{\flat\circ}/\varpi^\flat$.
  Let 
\begin{equation*}\tilde\fA:=\vpl\limits_{[p]} \fA,\   \tilde\fA':=\vpl\limits_{[p]} \fA'.\end{equation*} Here the transition maps  $[p]$ are the morphism   multiplication by $p$ and 
 inverse limits 
 exist in the categories of   $\varpi$-adic and $\varpi^\flat$-adic  formal schemes  (see  \cite[Lemme A.15]{PB}).      Index the inverse systems  by $\BZ_{\geq 0}$.  Let $ \Spf R_0^+\subset \fA$ be an affine open formal subscheme. 
  Let $R_i^+$ be the coordinate ring of $([p]^{i})^{-1} \Spf R_0^+ $, in other words, $ \Spf R_i^+=([p]^{i})^{-1} \Spf R_0^+ $.
 Let $R_i=R_i^+[\frac{1}{\varpi}]$,
  then $R_i^+ $ is integrally closed in $R_i$.
  Let 
$R^+ $ be the  $\varpi$-adic completion of $\bigcup\limits_{i=0}^\infty R_i^+$, 
$ R =R^+[\frac{1}{\varpi}] .$ 
Let $ \Spf R_0'^+\subset \fA'$ be an affine open formal subscheme such that the  restriction of \eqref{modpicond} to $ \Spf R_0^+\otimes   K^\circ/\varpi$  is an isomorphism to $ \Spf R_0'^+ \otimes   K^{\flat\circ}/\varpi^\flat$.
We similarly define $R_i'^+$, $R'^+$ and $R'$.

\begin{lem}   [{\cite[Lemme A.16]{PB}}]\label{A.16} 
 The affinoid $ K^\flat $-algebra $(R',R'^+)$ is perfectoid. So is  $(R,R^+)$.
 Moreover,  $(R',R'^+)$ is the tilt of $(R,R^+).$
   \end{lem}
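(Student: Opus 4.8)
The plan is to verify directly from Scholze's criteria (\cite[Definition 5.1]{Sch12}) that $R$ and $R'$ are perfectoid, and then identify the tilt by an explicit computation at the level of the reductions mod $\varpi$ and $\varpi^\flat$. First I would observe that $R^+$ is by construction the $\varpi$-adic completion of $\bigcup_i R_i^+$, and $R_i^+$ is the coordinate ring of the affine formal scheme $([p]^i)^{-1}\Spf R_0^+$. Since $[p]:\fA\to\fA$ is finite locally free (multiplication by $p$ on an abelian scheme is finite flat of degree $p^{2\dim}$), each transition map $R_i^+\to R_{i+1}^+$ is finite. The key point is that modulo $\varpi$, the map $[p]$ factors through the relative Frobenius up to a finite map that becomes an isomorphism after enough iterates — this is where the reduction enters. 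More precisely, on $\fA\otimes K^\circ/\varpi$, which by \eqref{modpicond} is identified with $\fA'\otimes K^{\flat\circ}/\varpi^\flat$, the isogeny $[p]$ and the Frobenius $\mathrm{Frob}$ differ by the Verschiebung, and after tilting one wants $x\mapsto x^p$ to be surjective on $R^\circ/\varpi$. So I would show that the induced map on $\bigcup_i R_i^+/\varpi$ given by $x\mapsto x^p$ is surjective: an element of $R_i^+/\varpi$ lifted along Frobenius lies in the image of the $p$-power map from $R_{i+1}^+/\varpi$, using that $[p]$ mod $p$ is, up to a universal homeomorphism handled by the colimit, the absolute Frobenius on the ordinary-type reduction. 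Passing to the completion, $R^\circ/\varpi\to R^\circ/\varpi$, $x\mapsto x^p$, is surjective, so $(R,R^+)$ is a perfectoid affinoid $K$-algebra; the same argument with $\varpi^\flat$ in place of $\varpi$ and $\fA'$ in place of $\fA$ shows $(R',R'^+)$ is a perfectoid affinoid $K^\flat$-algebra (this case being in fact easier since over $K^{\flat\circ}$ the $p$-power map literally \emph{is} Frobenius).

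Next I would identify the tilt. By \cite[Theorem 5.2]{Sch12} (or the affinoid version \cite[Proposition 6.17]{Sch12}), the tilt of $(R,R^+)$ is determined by $R^+/\varpi$ together with its module structure over $K^\circ/\varpi\cong K^{\flat\circ}/\varpi^\flat$, and one recovers $R^{\flat+}$ as $\varprojlim_{x\mapsto x^p} R^+$, with $R^{\flat+}/\varpi^\flat\cong R^+/\varpi$. So it suffices to exhibit a compatible isomorphism $R'^+/\varpi^\flat\cong R^+/\varpi$ respecting the $p$-power maps. But this is immediate: the isomorphism \eqref{modpicond} and its chosen refinement to $\Spf R_0^+\otimes K^\circ/\varpi\cong\Spf R_0'^+\otimes K^{\flat\circ}/\varpi^\flat$ induce, by pulling back along the compatible towers $([p]^i)^{-1}$ on both sides, isomorphisms $R_i^+\otimes K^\circ/\varpi\cong R_i'^+\otimes K^{\flat\circ}/\varpi^\flat$ for every $i$ — here one uses that the formation of $([p]^i)^{-1}$ of an affine open commutes with base change mod $\varpi$, and that $[p]$ corresponds under \eqref{modpicond} to the isogeny $[p]$ on $\fA'$. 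Taking the colimit over $i$ and then the $\varpi$-adic (resp. $\varpi^\flat$-adic) completion gives $R^+/\varpi\cong R'^+/\varpi^\flat$, compatibly with $p$-power maps. By the uniqueness in Scholze's tilting equivalence, $(R',R'^+)$ is the tilt of $(R,R^+)$.

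The main obstacle I expect is the surjectivity of $x\mapsto x^p$ on $R^\circ/\varpi$ — i.e. the perfectoid property for the $p$-adic tower, as opposed to the characteristic-$p$ tower where it is essentially tautological. The subtlety is that $[p]$ mod $p$ on an abelian scheme is \emph{not} the absolute Frobenius on the nose; it factors as $\mathrm{Ver}\circ\mathrm{Frob}$, and $\mathrm{Frob}$ here is the \emph{relative} Frobenius. One has to argue that after passing to the colimit $\bigcup_i R_i^+$ the discrepancy (the Verschiebung, together with the base-field twist in relative vs. absolute Frobenius) is absorbed: concretely, relative Frobenius composed with the absolute Frobenius of the base $K^\circ/\varpi$ equals absolute Frobenius, and $K^\circ/\varpi$ is already perfect, so on the colimit the $p$-power map is surjective onto each $R_i^+/\varpi$. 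I would carry this out by checking it on the generators $T_j^{1/p^r}$ of a standard affine chart, exactly as in the model example $\CG^\perf$ of \ref{CGperf}, and then invoke that it suffices to handle power-bounded elements since $R^\circ$ is the integral closure of $R^+$ and $R^+/\varpi=R^\circ/\varpi$ up to nilpotents that do not affect surjectivity of Frobenius. Since this is precisely the content of \cite[Lemme A.16]{PB}, I would ultimately cite that result for the technical core and present the tilt identification, which is what we actually use, in the detail above.
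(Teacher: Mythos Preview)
The paper does not give a proof of this lemma; it simply records the statement and attributes it to \cite[Lemme A.16]{PB}. Your proposal therefore goes beyond what the paper does, and the sketch is broadly correct: you correctly isolate the two tasks (surjectivity of Frobenius on $R^+/\varpi$, and matching $R^+/\varpi\cong R'^+/\varpi^\flat$ via \eqref{modpicond} to identify the tilt), and you correctly flag the genuine subtlety, namely that $[p]$ reduces to $V\circ F$ rather than to absolute Frobenius. Your resolution is the right one---in the $[p]$-inverse limit $[p]$ is inverted, hence from $VF=[p]$ and $FV=[p]$ both $V$ and $F$ become isomorphisms, so relative Frobenius is an isomorphism on $\tilde\fA\otimes K^\circ/\varpi$; combined with the surjectivity of Frobenius on $K^\circ/\varpi\cong K^{\flat\circ}/\varpi^\flat$, this forces absolute Frobenius to be surjective on $R^+/\varpi$. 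Since you end by citing \cite{PB} anyway, your proposal and the paper's treatment are aligned.

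One phrasing to fix: the sentence ``$[p]$ mod $p$ is, up to a universal homeomorphism handled by the colimit, the absolute Frobenius on the ordinary-type reduction'' is inaccurate on two counts. First, there is no ordinariness hypothesis here---the lemma is for an arbitrary formal abelian scheme over $K^\circ$. Second, Verschiebung is in general \emph{not} a universal homeomorphism: its kernel has a nontrivial \'etale part whenever the $p$-rank is positive. What makes the argument work is not that $V$ is radicial but that $V$ becomes invertible after passing to the $[p]$-inverse limit. You state this correctly in your last paragraph, so the logic is sound; just remove or rewrite the earlier sentence so it does not suggest a mechanism that fails in the non-ordinary case.
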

   Thus 
   the  adic generic fiber  $\CA^\perf$ (resp. $\CA'^{\perf}$)  of  $\tilde\fA$ (resp. $\tilde\fA'$) is a perfectoid space. 
        Moreover,   $ \CA'^{\perf}$ is the tilt of $ \CA^\perf$. Thus we use $ \CA^{\perf\flat}$ to denote $ \CA'^{\perf}$.
     We call $\CA^\perf$ (resp. $\CA^{\perf\flat}$)  the  perfectoid universal cover  
of $ \fA$ (resp. $ \fA'$).    
 By Lemma \ref{algpoints}, there  are natural bijections
$$\tilde \fA(K^\circ)\cong \CA^\perf(K,K^\circ),\ \tilde \fA'(K^\fcc)\cong \CA^{\perf\flat}(K^\flat,K^\fcc).$$
  Let $\CA$ (resp.  $\CA' $) be the adic generic fiber of $\fA$ (resp. $\fA'$). 
By Lemma \ref{algpoints}, we have   natural   bijections  $$\fA (K^\circ)\cong \CA(K,K^\circ),\ \fA' (K^\fcc)\cong \CA'(K^\flat,K^\fcc).$$
\begin{defn} \label{adicgroup}
  The group structures on $\CA(K, K^\circ)$,  $\CA^\perf(K, K^\circ)$, $\CA'(K^\flat,K^\fcc)$,and  $\CA^{\perf\flat}(K^\flat,K^\fcc)$ are defined  to be the ones
 induced from  the natural bijections above. 
 \end{defn}

  By the functoriality of taking adic generic fibers, we have  morphisms  $$\pi_n:\CA^\perf \to \CA,\ \pi_n':\CA^{\perf\flat} \to \CA' $$
  for $n\in \BZ_{\geq 0},$ and 
  morphisms
$$[p]:\CA\to \CA, \ [p]:\CA'\to \CA'.$$
Consider
   the following commutative diagram    
 \begin{equation}  
    \xymatrix{
   	\tilde\fA(K^\circ)   \ar[r]^{\cong } \ar[d]^{\cong}  &\vpl\limits_{[p]} \fA (K^{ \circ})  \ar[d]^{\cong} \\
\CA^\perf(K,K^\circ) \ar[r]^{   }    & \vpl\limits_{[p]}   \CA(K,K^\circ)}\label{faca}
\end{equation}
where the bottom map is given by $\pi_n$'s. We immediately have the following lemma.
\begin{lem}\label{faca'}
    The bottom map in \eqref{faca} is  a group isomorphism.
    \end{lem}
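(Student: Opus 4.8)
The plan is to deduce the lemma purely formally from the commutativity of the square \eqref{faca} together with the fact that the other three arrows in it are group isomorphisms. First I would recall why the top horizontal arrow $\tilde\fA(K^\circ)\xrightarrow{\cong}\vpl_{[p]}\fA(K^\circ)$ respects the group structures: by construction $\tilde\fA=\vpl_{[p]}\fA$ as a $\varpi$-adic formal scheme, the transition morphisms $[p]$ are homomorphisms of formal group schemes, so $\tilde\fA$ is a group object, and the functor $(-)(K^\circ)=\Hom_{K^\circ}(\Spf K^\circ,-)$ is limit-preserving; hence it identifies $\tilde\fA(K^\circ)$ with the inverse-limit group $\vpl_{[p]}\fA(K^\circ)$, compatibly with addition. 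Next I would treat the two vertical arrows. The left one is the bijection $\tilde\fA(K^\circ)\cong\CA^\perf(K,K^\circ)$ of Lemma \ref{algpoints}, which is a group isomorphism by the very definition of the group structure on $\CA^\perf(K,K^\circ)$ (Definition \ref{adicgroup}). For the right one, Lemma \ref{algpoints} gives for each $n$ a bijection $\fA(K^\circ)\cong\CA(K,K^\circ)$, again a group isomorphism by Definition \ref{adicgroup}, and by functoriality of the adic-generic-fibre construction these bijections intertwine the transition maps $[p]$ on both sides; passing to the inverse limit yields that the right vertical arrow is a group isomorphism.

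Then the conclusion is immediate: since \eqref{faca} commutes and the top, left and right arrows are group isomorphisms, the bottom arrow $\CA^\perf(K,K^\circ)\to\vpl_{[p]}\CA(K,K^\circ)$ equals the composite of the right arrow with the top arrow with the inverse of the left arrow, hence is itself a group isomorphism. (A bijective homomorphism is automatically an isomorphism, so nothing further is needed.)

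I do not expect any genuine obstacle; the content of the statement is entirely bookkeeping, transporting the tautological identity $\tilde\fA=\vpl_{[p]}\fA$ through the point functor $(-)(K^\circ)$ and through the definitions in Definition \ref{adicgroup}. The one point that deserves an explicit sentence, and which I would include, is the verification that the transition maps $[p]$ on $\CA(K,K^\circ)$ are group homomorphisms — so that $\vpl_{[p]}\CA(K,K^\circ)$ really carries an inverse-limit group structure and the assertion is not vacuous. This is immediate: under the bijection of Lemma \ref{algpoints} these maps correspond to multiplication by $p$ on $\fA(K^\circ)$, and the group structure on $\CA(K,K^\circ)$ is by definition transported from $\fA(K^\circ)$. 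Once that is recorded, the four-term commuting square with three known isomorphisms forces the fourth.
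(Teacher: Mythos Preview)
Your proposal is correct and follows exactly the approach the paper intends: the paper states that the lemma follows ``immediately'' from the commutative square \eqref{faca}, and your argument simply spells out why the other three arrows are group isomorphisms (via Definition \ref{adicgroup} and the limit-preserving property of the point functor) so that the fourth is forced. Your added remark that the transition maps $[p]$ on $\CA(K,K^\circ)$ are group homomorphisms is a welcome clarification the paper leaves implicit.
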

      \begin{rmk}  Indeed,
$\CA^\perf$ serves as certain ``limit" of the inverse system $\vpl \CA$
in the sense of \cite[Definition 2.4.1]{SW} by \cite[Proposition 2.4.2]{SW}.    Then Lemma \ref{faca'}   also follows from \cite[Proposition 2.4.5]{SW}.  
  \end{rmk}
  Now we study torsion points in the inverse limit. 
  We set up some group theoretical convention once for all.
  Let $G$ be an abelian group.    We  denote by  $G[n]$ the subgroup   of  elements  of  orders dividing $n$ and
  by $G_{\tor}$   the subgroup of  orsion elements. For a prime $p$,
we use   $G[p^\infty]$ to denote the subgroup   of $p$-primary torsion points, and  
 $G_{p'-\tor}$ to denote the subgroup   of prime-to-$p$ torsion points. If $H$ is a \textit{subset} of $G$,    $H_\tor$ and  $H_{p'-\tor}$ to denote the subset      $H\bigcap G_{\tor}$ 
 and $H\bigcap G_{p'-\tor}$ when both the definitions of $H$ and $G$ are clear from the context.  
  The following lemma is elementary. 
 \begin{lem}\label{gpth}   Let $G$ be an abelian group, then
 $$  ( \vpl\limits_{[p]} G) _{p'-\tor}\cong\vpl\limits_{[p]} G _{p'-\tor}.$$  
 
\end{lem}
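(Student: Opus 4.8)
The plan is to prove the lemma by realizing both sides as subgroups of the single ambient group $\vpl_{[p]} G$, via the map induced by inclusion, so that the only thing to verify is that these two subgroups coincide. First I would observe that multiplication by $p$ carries $G_{p'-\tor}$ into itself, so the inclusion $G_{p'-\tor}\incl G$ is compatible with the transition maps of the two inverse systems and induces a homomorphism $\iota\colon \vpl_{[p]} G_{p'-\tor}\to \vpl_{[p]} G$; on compatible sequences $\iota$ is just $(x_i)_{i\geq 0}\mapsto (x_i)_{i\geq 0}$, hence it is injective. The lemma is then equivalent to the identity $\iota\bigl(\vpl_{[p]} G_{p'-\tor}\bigr)=\bigl(\vpl_{[p]} G\bigr)_{p'-\tor}$ of subgroups of $\vpl_{[p]} G$.

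For one inclusion I would take $(x_i)_i\in\bigl(\vpl_{[p]} G\bigr)_{p'-\tor}$ of order $m$ with $\gcd(m,p)=1$; then $m x_i=0$ for every $i$, so each $x_i\in G[m]\subset G_{p'-\tor}$ and $(x_i)_i$ lies in the image of $\iota$. For the reverse inclusion, which is the one with content, I would take $(x_i)_i\in\vpl_{[p]} G_{p'-\tor}$, let $m$ be the order of $x_0$ (so $\gcd(m,p)=1$), and show by induction on $i$ that $m x_i=0$: assuming $m x_i=0$, we get $p\,(m x_{i+1})=m x_i=0$, so $m x_{i+1}\in G[p]$; but $m x_{i+1}\in G_{p'-\tor}$ as well since $x_{i+1}\in G_{p'-\tor}$, and $G[p]\cap G_{p'-\tor}=0$ (an element killed by $p$ and by an integer prime to $p$ is killed by their gcd $1$), whence $m x_{i+1}=0$. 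Thus $(x_i)_i$ is annihilated by $m$, so it belongs to $\bigl(\vpl_{[p]} G\bigr)_{p'-\tor}$. Together the two inclusions show $\iota$ is an isomorphism onto $\bigl(\vpl_{[p]} G\bigr)_{p'-\tor}$, which is the assertion.

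I do not expect any real obstacle: the statement is elementary and the argument uses nothing beyond $G[p]\cap G_{p'-\tor}=0$. The one point to keep straight is conceptual rather than technical — membership in $\vpl_{[p]} G_{p'-\tor}$ demands that every term of the sequence, not merely the $0$-th, be prime-to-$p$ torsion, and it is exactly this hypothesis that the above induction turns into a uniform annihilator $m$, which is what forces the whole sequence to be torsion in $\vpl_{[p]} G$. (One could even note that $[p]$ is an automorphism of $G_{p'-\tor}$, so projection to the $0$-th component identifies $\vpl_{[p]} G_{p'-\tor}$ with $G_{p'-\tor}$; but this refinement is not needed.)
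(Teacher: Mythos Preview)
Your argument is correct. The paper does not give a proof of this lemma at all; it simply introduces it with the sentence ``The following lemma is elementary'' and states the result. Your write-up supplies exactly the kind of elementary verification the paper leaves to the reader, and your parenthetical remark that $[p]$ is already an automorphism of $G_{p'-\tor}$ is in fact how the paper uses the lemma immediately afterwards (Lemma~\ref{4isoms}).
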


         \begin{lem}   \label{4isoms}      There are group isomorphisms  $$ \CA^\perf(K,K^\circ)_{p'-\tor} \cong \vpl\limits_{[p]} \CA(K,K^\circ)_{p'-\tor}  \cong \CA(K,K^\circ)_{p'-\tor} $$
         where the second isomorphism is  the restriction of  $\pi_n$.
         Similar result holds for $\CA'$ and $\CA^{\perf\flat}$.
         \end{lem}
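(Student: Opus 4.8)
The plan is to deduce both isomorphisms formally, by combining Lemma~\ref{faca'} with Lemma~\ref{gpth} and the elementary fact that multiplication by $p$ is invertible on prime-to-$p$ torsion.

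First I would record that, by Lemma~\ref{faca'}, the bottom arrow of \eqref{faca} is a group isomorphism $\CA^\perf(K,K^\circ)\cong\vpl_{[p]}\CA(K,K^\circ)$ whose composite with the $n$-th coordinate projection is $\pi_n$; here the transition maps on the right are multiplication by $p$ in the group structure of Definition~\ref{adicgroup}, which agrees with the map induced on $(K,K^\circ)$-points by the multiplication-by-$p$ morphism $[p]\colon\fA\to\fA$ by functoriality of the adic generic fiber and Lemma~\ref{algpoints}. A group homomorphism carries prime-to-$p$ torsion into prime-to-$p$ torsion, so restricting gives $\CA^\perf(K,K^\circ)_{p'-\tor}\cong(\vpl_{[p]}\CA(K,K^\circ))_{p'-\tor}$, and Lemma~\ref{gpth} with $G=\CA(K,K^\circ)$ rewrites the right-hand side as $\vpl_{[p]}\CA(K,K^\circ)_{p'-\tor}$. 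This is the first isomorphism, and it remains compatible with the $\pi_n$'s because the isomorphism of Lemma~\ref{gpth} is the tautological identification of compatible sequences.

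Next, set $H:=\CA(K,K^\circ)_{p'-\tor}$; I would check that $[p]\colon H\to H$ is a group automorphism: if $ph=0$ then $\ord(h)\mid p$ while $\ord(h)$ is prime to $p$, forcing $h=0$, and if $h\in H$ has order $m$ with $pp'\equiv1\pmod m$ then $p'h$ is a preimage. Hence every transition map in $\vpl_{[p]}H$ is an isomorphism, so every coordinate projection $\vpl_{[p]}H\to H$ is an isomorphism, with inverse $h\mapsto([p]^{-i}h)_{i}$. Combined with the previous paragraph, the $n$-th projection is the restriction of $\pi_n$, which gives the second isomorphism. The statement for $\CA'$ and $\CA^{\perf\flat}$ follows by repeating the argument verbatim, using the $K^\flat$-analogue of Lemma~\ref{faca'} (valid by the same construction over $K^\flat$, cf.\ the remark following Lemma~\ref{faca'} or \cite[Proposition~2.4.5]{SW}) and Lemma~\ref{gpth} with $G=\CA'(K^\flat,K^\fcc)$.

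I do not expect a genuine obstacle here: the whole argument is a manipulation of inverse limits, and the only non-formal input, namely invertibility of $[p]$ on prime-to-$p$ torsion, is immediate. The one place demanding a line of care is matching the group-theoretic $[p]$ of Lemma~\ref{gpth} with the geometric morphism $[p]$ used to form $\tilde\fA$, which is handled by Definition~\ref{adicgroup} together with functoriality of taking adic generic fibers.
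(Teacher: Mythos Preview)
Your proposal is correct and follows essentially the same route as the paper: the first isomorphism comes from Lemma~\ref{faca'} combined with Lemma~\ref{gpth}, and the second from the invertibility of $[p]$ on prime-to-$p$ torsion. The only minor difference is that for the second isomorphism the paper invokes the finiteness of $\CA(K,K^\circ)[n]\cong\fA(K^\circ)[n]$ for $n$ coprime to $p$ (so that an injective endomorphism is automatically bijective), whereas you give a direct abstract-group argument using a multiplicative inverse of $p$ modulo the order of each element; your version is slightly more elementary and avoids the appeal to finiteness, but the two arguments are essentially interchangeable here.
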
 
  
 \begin{proof} The first isomorphism is from Lemma \ref{faca'} and  \ref{gpth}.
 Since  $\CA(K,K^\circ)[n]\cong \fA(K^\circ)[n]$ is a finite group, 
$[p]$ is an isomorphism on $\CA(K,K^\circ)[n]$ for every natural number $n$ coprime to $p$. The second isomorphism follows.
  \end{proof}

      \begin{prop} \label{perfgroup} 
       The functorial bijection $$\rho=\rho_{ \CA^\perf}: \CA^\perf(K,K^\circ)\cong  \CA^{\perf\flat}(K^{ \flat },K^{ \flat\circ})$$  (see  \ref{srho})  is a group isomorphism.
          \end{prop}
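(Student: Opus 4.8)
The plan is to reduce the statement to the explicit tilting computation for the perfectoid group structure, using that both sides are abelian groups under Definition \ref{adicgroup} and that $\rho_{\CA^\perf}$ is already known to be a bijection from Lemma \ref{tiltmor} (2). So it suffices to check compatibility with addition. First I would observe that, by Lemma \ref{faca'}, the group $\CA^\perf(K,K^\circ)$ is identified with $\vpl_{[p]}\CA(K,K^\circ)\cong \vpl_{[p]}\fA(K^\circ)$, and similarly on the tilted side; under these identifications the group law is the termwise one coming from the formal group law of $\fA$ (resp. $\fA'$) on each $\fA(K^\circ)$. Since $\rho_{\CA^\perf}$ is functorial for morphisms of perfectoid spaces (Lemma \ref{tiltmor} (2)), and since the addition morphism $\CA^\perf\times\CA^\perf\to\CA^\perf$ tilts to the addition morphism $\CA^{\perf\flat}\times\CA^{\perf\flat}\to\CA^{\perf\flat}$ (because the perfectoid universal cover and its group structure are themselves constructed compatibly with tilting in Lemma \ref{A.16}), the bijection $\rho$ intertwines the two addition maps. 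Concretely, one applies Lemma \ref{tiltmor} (2) to the commutative square relating the multiplication morphism on $\CA^\perf$ and on its tilt, evaluated on $K$-points, to conclude $\rho(x+y)=\rho(x)+\rho(y)$.

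The one genuinely substantive point is that the group structure morphism on $\CA^{\perf\flat}$ really is the tilt of the group structure morphism on $\CA^\perf$ — i.e. that the perfectoid universal cover is a group object in perfectoid spaces over $K$ whose tilt, as a group object, is the perfectoid universal cover of $\fA'$. I would extract this from the construction in \cite[Lemme A.16]{PB}: the coordinate ring $R^+$ of $\tilde\fA$ is the $\varpi$-adic completion of the colimit $\bigcup R_i^+$ along $[p]$, and the comultiplication on $\tilde\fA$ is induced levelwise from that on $\fA$ compatibly with the transition maps; mod $\varpi$ this coincides with the analogous structure on $\tilde\fA'$ via \eqref{modpicond}, so by the equivalence \cite[Theorem 5.2]{Sch12} the group-scheme-in-perfectoid-spaces structure on $\CA^{\perf\flat}$ is exactly the tilt of that on $\CA^\perf$. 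I expect this bookkeeping — checking that the Hopf-algebra (or formal group) structure maps are preserved under the colimit, completion, and tilting operations, and that they are compatible with the identification \eqref{modpicond} — to be the main obstacle; everything else is a formal consequence of Lemma \ref{tiltmor} (2).

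An alternative, more hands-on route would bypass the group-scheme language entirely: describe a $K$-point of $\CA^\perf$ as a compatible system $(a_i)_{i\ge 0}$ with $a_i\in\fA(K^\circ)$, $[p]a_{i+1}=a_i$, describe the tilted point via the Serre–Tate/Tate-module reduction picture of \ref{subTilting and reduction}, and compute $\rho(x+y)$ directly from Lemma \ref{tiltred2} together with the mod-$\varpi$ isomorphism \eqref{modpicond}, since addition on $\fA(K^\circ)/\varpi$ matches addition on $\fA'(K^{\flat\circ})/\varpi^\flat$ and the full group law is recovered by $p$-adic limits along the tower. This is essentially the same argument repackaged, and I would likely present the short functorial version in the main text and relegate the verification that $\CA^{\perf\flat}$ is the tilt of $\CA^\perf$ as groups to a remark citing \cite[Lemme A.16]{PB} and \cite[Theorem 5.2]{Sch12}.
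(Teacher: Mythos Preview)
Your proposal is correct and follows essentially the same approach as the paper: both reduce to showing that the multiplication morphism on $\CA^\perf$ tilts to the multiplication morphism on $\CA^{\perf\flat}$, invoking the mod-$\varpi$ comparison \eqref{modpicond} together with \cite[Theorem 5.2]{Sch12}, and then applying the functoriality of $\rho$ (Lemma \ref{tiltmor} (2)).

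The only noteworthy difference is presentational. You speak of ``the addition morphism $\CA^\perf\times\CA^\perf\to\CA^\perf$'' and the group-object structure on $\CA^\perf$ directly, which implicitly requires knowing that this product exists in perfectoid spaces and that tilting commutes with it. The paper sidesteps this by introducing $\fB=\fA\times\fA$, constructing its perfectoid universal cover $\CB^\perf$ separately via Lemma \ref{A.16}, and then checking two things: first, that the induced map $m:\CB^\perf\to\CA^\perf$ satisfies $m^\flat=m'$ (this is your ``substantive point,'' handled exactly as you suggest), and second, that the projections $p_i:\CB^\perf\to\CA^\perf$ induce a bijection $\CB^\perf(K,K^\circ)\cong\CA^\perf(K,K^\circ)\times\CA^\perf(K,K^\circ)$ compatible with tilting. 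This second verification is what makes precise your assertion that the multiplication on points really is the tilt of the multiplication on points, without appealing to any general product in the perfectoid category. Your write-up would benefit from either making this step explicit or remarking that one is using $\CB^\perf$ as a stand-in for the product.
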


   \begin{proof} 
   We only show the
  compatibility of $\rho$ with the multiplication maps, i.e. we show that the following  diagram is commutative:
 $$
    \xymatrix{
   \CA^\perf(K,K^\circ)\times \CA^\perf(K,K^\circ) \ar[r]^{\rho \times  \rho  } \ar[d]^{}  & \CA^{\perf\flat} (K^\flat,K^\fcc)\times \CA^{\perf\flat} (K^\flat,K^\fcc)  \ar[d]^{} \\
    \CA^\perf(K,K^\circ)\ar[r]^{ \rho }    & \CA^{\perf\flat} (K^\flat,K^\fcc) .}
$$
Here the vertical maps are the multiplication maps on corresponding groups.

Consider the formal abelian schemes $\fB=\fA\times \fA$ and $\fB'=\fA'\times \fA'$. We do the same construction to get  their perfectoid universal covers $\CB^\perf$ and $\CB^{\perf\flat}$. 
The multiplication morphism $ \fB\to \fA$   induces $m:\CB^\perf\to \CA^\perf$. 
The multiplication morphism $ \fB'\to \fA'$   induces  $m':\CB^{\perf\flat}\to \CA^{\perf\flat}$.  By  \eqref{modpicond} and   \cite[Theorem 5.2]{Sch12}, $m'=m^\flat$.
By functoriality, we have a
 commutative diagram
 $$
    \xymatrix{
   \CB^\perf(K,K^\circ)  \ar[r]^{\rho_{  \CB^\perf}  } \ar[d]^{m}  & \CB^{\perf\flat} (K^\flat,K^\fcc)   \ar[d]^{m^\flat} \\
    \CA^\perf(K,K^\circ)\ar[r]^{ \rho_{  \CA^\perf} }    & \CA^{\perf\flat} (K^\flat,K^\fcc) .}
$$

We only need to show that this diagram can be  identified with the diagram we want.   For example we show that the top horizontal maps in the two diagrams coincide, i.e.   a commutative diagram
 $$
    \xymatrix{
   \CB^\perf(K,K^\circ)   \ar[r]^{  \rho_{  \CB^\perf}  } \ar[d]^{\cong}  &\CB^{\perf\flat} (K^\flat,K^\fcc) \ar[d]^{\cong} \\
 \CA^\perf(K,K^\circ)\times \CA^\perf(K,K^\circ)\ar[r]^{\rho \times  \rho }    & \CA^\perf(K,K^\circ)\times \CA^\perf(K,K^\circ) .}
$$
The projection  $ \fB=\fA\times \fA\to \fA$ to the $i$-th component, $i=1,2$, induces
$p_i:\CB^\perf \to \CA^\perf $. Easy to check that
 $$p_1\times p_2:\CB^\perf(K,K^\circ)\to \CA^\perf(K,K^\circ)\times\CA^\perf(K,K^\circ)$$ is a group isomorphism by passing to formal schemes.  Similarly we have an isomorphism  $$p_1'\times p_2':\CB^{\perf\flat} (K^\flat,K^\fcc)\to  \CA^\perf(K,K^\circ)\times \CA^\perf(K,K^\circ).$$
The commutativity  is   implied by that
$p_i'=p_i ^\flat$,  which is   from  \eqref{modpicond} and   \cite[Theorem 5.2]{Sch12}.      \end{proof}


 \subsection{Tilting and reduction}\label{Tilting and reduction}

Let   $k=\bar \BF_p$ and  let $W=W(k)$   be the ring of Witt vectors. 
   Let $A$ be an  abelian scheme over $W$,  $A_{K^\circ}$ be its base change to $K^\circ$,   $\CA$ be the adic generic fiber of $A_{K^\circ}$. 
   Let $A_k$ be the special fiber of $A$, and 
   $ A'$ be the base change $A_k \otimes K^{\flat\circ}$ with adic  generic fiber    $\CA' $. 
Since $$A_{K^\circ}\otimes (K^\circ/\varpi)\cong A\otimes_Wk\otimes_k(K^\circ/\varpi)\cong A'\otimes_{K^\fcc} (K^\fcc/\varpi^\flat),$$ 
we can apply the construction in  Lemma \ref{A.16} to the formal completions of $A_k\otimes_k{K^\circ}$ and $A'$.  
Then we have  the
perfectoid universal cover $\CA^\perf$  of  the $\varpi$-adic formal completion of $A_{K^\circ}$, the
perfectoid universal cover $\CA^{\perf\flat}$  of  the $\varpi^\flat$-adic formal completion of $A_{K^\fcc}$,
 and  the morphisms  $\pi_n:\CA^\perf\to \CA$,   $\pi_n':\CA^{\perf\flat}\to\CA'$ for each $n\in \BZ_{\geq0}$.
The following  well-known results  can be deduced from \cite{ST}.

 \begin{lem}\label{NOS}  (1) The inclusion  
 $A(W)\incl A ( {K^\circ})$ gives an isomorphism 
 $A(W)_{p'-\tor}\cong  A ( {K^\circ})_{p'-\tor}.$

 (2) The reduction map   gives   an isomorphism 
 $$\red: A(W)_{p'-\tor}\cong A( k)_{p'-\tor}.$$
 
(3) The natural inclusion  $A( k)\incl   A_{K^\fcc}(K^\fcc)$ gives an isomorphism  $A( k)_{p'-\tor} \cong A_{K^\fcc}(K^\fcc)_{p'-\tor} .$ 

 \end{lem}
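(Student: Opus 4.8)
\medskip
\noindent\textbf{Proof plan.} The plan is to reduce all three isomorphisms to one elementary fact. For a positive integer $n$ coprime to $p$, multiplication $[n]\colon A\to A$ is a finite \'etale isogeny of abelian schemes over $W$, since $n$ is a unit in the local ring $W=W(k)$, whose maximal ideal is $(p)$; hence $A[n]$, the kernel of $[n]$, is a finite \'etale group scheme over $W$. Since $W$ is a complete local ring with algebraically closed residue field $k=\bar\BF_p$, it is strictly Henselian, so every finite \'etale $W$-scheme is a finite disjoint union of copies of $\Spec W$; in particular $A[n]$ is a constant group scheme. Thus for any connected $W$-algebra $R$ the natural map $A[n](W)\to A[n](R)$ is a bijection, and likewise any $W$-morphism of connected $W$-algebras induces a bijection on $A[n]$-points. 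Taking the union over all $n$ coprime to $p$ then identifies prime-to-$p$ torsion subgroups. This is how I would obtain (1) and (2): $K^\circ$ is a domain, hence a connected $W$-algebra via $W\subset K^\circ$, and $k$ is a connected $W$-algebra via $W\to W/(p)=k$; these maps induce on points the inclusion $A(W)\into A(K^\circ)$ and the reduction map $\red\colon A(W)\to A(k)$ respectively, and unioning the bijections $A[n](W)\cong A[n](K^\circ)$ and $A[n](W)\cong A[n](k)$ over $n$ coprime to $p$ yields (1) and (2), compatibly with the maps in the statement.

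For (3) I would instead work over $k$: although $K^{\flat\circ}=\bar\BF_p[[t^{1/p^\infty}]]$ has characteristic $p$, the integer $n$ coprime to $p$ is still invertible in $k$, so $A_k[n]$ is finite \'etale over the algebraically closed field $k$, hence a constant group scheme. The ring $K^{\flat\circ}$ is a domain containing $k$ with residue field $k$, hence a connected $k$-algebra, so $A_k[n](k)\to A_k[n](K^{\flat\circ})$ is a bijection. Identifying $A(k)=A_k(k)$ and $A_{K^{\flat\circ}}(K^{\flat\circ})=(A_k\otimes_k K^{\flat\circ})(K^{\flat\circ})$ and taking the union over $n$ coprime to $p$ gives (3).

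The only genuine input is the splitting of finite \'etale covers over a strictly Henselian local ring, so I do not expect a real obstacle; the sole care required is to record that $W$ is strictly Henselian, that $k$ is trivially so, and that $K^\circ$ and $K^{\flat\circ}$ are connected. This is precisely the content one extracts from the good-reduction theory of Serre--Tate \cite{ST}, which may equivalently be phrased by saying that the prime-to-$p$ torsion of an abelian scheme forms an \'etale group scheme and hence ``spreads out'' unchanged through the relevant base changes.
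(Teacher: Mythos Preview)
Your proposal is correct and is precisely the standard argument underlying the paper's citation of \cite{ST}: the paper gives no proof beyond ``can be deduced from \cite{ST}'', and the content one extracts from Serre--Tate is exactly that $A[n]$ is finite \'etale for $n$ prime to $p$, hence constant over the strictly Henselian bases $W$ and $k$, which forces bijections on $R$-points for connected $R$. Your write-up is more explicit than the paper's, but the approach is the same.
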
  
  
Now we relate reduction and tilting.
 \begin{lem}\label{comdiag} 
 Let the unindexed   maps in the following diagram be the naturals ones:
   $$
    \xymatrix{
    \CA ( K,K^\circ)_{p'-\tor}  & \ar[l]_{ \pi_n  } \CA^\perf (K, {K^\circ})_{p'-\tor}\ar[r]^{ \rho  } 
 & \CA^{\perf\flat} ( K^\flat, K^\fcc )_{p'-\tor}\ar[r]^{\pi_n'  }  &  \CA' ( K^\flat, K^\fcc )_{p'-\tor}  \\
  \ar[u]      A_{K^\circ}( {K^\circ})_{p'-\tor} & \ar[l]_{   }   A(W)_{p'-\tor}\ar[r]^{ \red  } 
 &A( k)_{p'-\tor}\ar[r]^{  }  & A_{K^\fcc}(K^\fcc)_{p'-\tor}\ar[u] 
.}
$$ 
Then  each map is a group isomorphism, and the diagram  is commutative (up to inverting the arrows).

  \end{lem}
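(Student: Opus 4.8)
The plan is to verify the three sub-claims in order: that each map is well-defined and a group isomorphism, that the outer squares commute, and that the inner square commutes. The vertical maps are induced by Lemma \ref{algpoints} together with the inclusions $A(W) \incl A_{K^\circ}(K^\circ)$ and $A(k) \incl A_{K^\fcc}(K^\fcc)$; the horizontal maps in the top row are $\pi_n$, $\rho$, $\pi_n'$, and in the bottom row the identity inclusion, $\red$, and the inclusion $A(k) \incl A_{K^\fcc}(K^\fcc)$. That each bottom map is a $p'$-torsion isomorphism is exactly Lemma \ref{NOS}. That each top map is a $p'$-torsion group isomorphism is what we have already assembled: the map $\pi_n$ is a $p'$-torsion isomorphism by Lemma \ref{4isoms}, the map $\rho$ is a group isomorphism by Proposition \ref{perfgroup} (hence restricts to an isomorphism on $p'$-torsion), and $\pi_n'$ is the analogue of Lemma \ref{4isoms} for $\CA'$ and $\CA^{\perf\flat}$. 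So the only real content is the commutativity of the three squares.

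First I would treat the leftmost square. Both horizontal maps are the natural projections $\pi_n$ (respectively the inclusion $A(W)\incl A_{K^\circ}(K^\circ)$), and the vertical maps come from Lemma \ref{algpoints}; commutativity is just functoriality of the adic-generic-fiber construction applied to the transition morphism $[p]^n$ and to the structure morphism $A \to A_{K^\circ}$, so this is formal. The rightmost square is similar: the horizontal maps are $\pi_n'$ and the inclusion $A(k) \incl A_{K^\fcc}(K^\fcc)$, and commutativity again follows from functoriality of passing to adic generic fibers, now for the base change $A_k \leadsto A_k \otimes K^{\flat\circ}$ and its inverse limit. The middle square, relating $\rho$ and $\red$, is the one with genuine content. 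Here I would invoke Lemma \ref{tiltred2}: take the perfectoid affinoid $(R, R^+)$ cutting out an affine chart of $\CA^\perf$ and its tilt $(R^\flat, R^\fpl)$, and observe that they satisfy the hypotheses of \ref{subTilting and reduction} with $S = \bigcup_i R_i^+$ arising from the chart of $\tilde\fA$ (more precisely, from a chart of $A_k \otimes_k W$, flat over $W$, whose reduction gives the corresponding chart of $\tilde\fA'$). A $p'$-torsion point of $A(W)$ is a $W$-algebra map $\phi : S \to W$ — or rather on the relevant quotient — and its reduction $\red$ is $\phi_k : S_k \to k$; Lemma \ref{tiltred2} says precisely that $\rho$ sends the point of $\Spa(R,R^+)$ induced by $\phi$ to the point of $\Spa(R^\flat, R^\fpl)$ induced by $\phi_k$, which is exactly what the middle square asserts.

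The main obstacle I anticipate is bookkeeping rather than mathematics: one must choose the affine charts of $A$ over $W$ and of $A'$ over $K^{\flat\circ}$ compatibly, so that the inverse-limit charts $\Spf R^+$ and $\Spf R'^+$ of $\tilde\fA$ and $\tilde\fA'$ genuinely fit the template of \ref{subTilting and reduction} — i.e. $R^+$ is the $\varpi$-adic completion of $S \otimes_W K^\circ$ and $R^\fpl$ the $\varpi^\flat$-adic completion of $S_k \otimes_k K^{\flat\circ}$ for one and the same flat $W$-algebra $S$. This is possible because the transition maps $[p]$ are defined over $W$ and the isomorphism \eqref{modpicond} is the reduction of the identity on $A_k$; once the charts are aligned, Lemma \ref{tiltred2} applies verbatim. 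A secondary point is to check that a $p'$-torsion point of $A(W)$, which is $W$-valued and integral, indeed lands in the locus where these lemmas apply and that "$\psi/\varpi = \psi'/\varpi^\flat$" holds on the nose — but this is immediate from $\phi_k = \phi \bmod p$ and the compatibility of the mod-$\varpi$ identifications. Finally, since every map in sight is already known to be a bijection, commutativity of the squares of sets automatically upgrades to commutativity of the diagram of groups, because the group structures on all four corners of each square are defined (Definition \ref{adicgroup} and Lemma \ref{NOS}) so as to be transported along precisely these maps.
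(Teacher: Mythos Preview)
Your proposal is correct and follows essentially the same approach as the paper: the isomorphism claims are collected from Definition \ref{adicgroup}, Lemma \ref{4isoms}, Proposition \ref{perfgroup}, and Lemma \ref{NOS}, and the substantive commutativity (relating $\rho$ and $\red$) is deduced from Lemma \ref{tiltred2}. The paper streamlines slightly by reducing to $n=0$ and observing that, once all maps are isomorphisms, it suffices to check that the two composites from $A(W)_{p'-\tor}$ to $\CA^{\perf\flat}(K^\flat,K^\fcc)_{p'-\tor}$ agree, rather than decomposing into three squares as you do; but the content is identical.
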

\begin{proof}We may assume $n=0$.
 Definition \ref{adicgroup}, Lemma \ref{4isoms}, Proposition \ref{perfgroup} and Lemma \ref{NOS} give the isomorphisms. 
We only need to check the commutativity. 
  And we only need to check   the two maps from $A(W)_{p'-\tor}$ to $ \CA^{\perf\flat} ( K^\flat, K^\fcc )_{p'-\tor} $ are the same.
This follows from   Lemma \ref{tiltred2}. \end{proof}
 
 Similarly, we have the following   commutative diagram:
   \begin{equation} 
    \xymatrix{
   \CA^\perf (K, {K^\circ})     \ar[r]^{  \rho }  &  \CA^{\perf\flat} ( K^\flat, K^\fcc ) \ar[rd]^{  \cong}  &
    \\
   \ar@{^{(}->}[u] ^{\iota}   \vpl\limits_{[p]} A(W)  \ar[r]^{\red } \ar[d] ^{ \pi_0 } &
    \vpl\limits_{[p]}    \ar[d] ^{\pi'_n } A( k) \ar@{^{(}->}[r] & \ar[d] ^{\pi_n'} \vpl\limits_{[p]} \CA' ( K^\flat, K^\fcc )  \\
     \ar[d] ^{}   \bigcap\limits_{i=0}^{\infty}p^iA(W) &A( k)  \ar[d]^{[p^n]}\ar@{^{(}->}[r] &\ar[d]^{[p^n]} \CA' ( K^\flat, K^\fcc )\\
     A(W)  \ar[r]^{\red }&A( k) \ar@{^{(}->}[r] &   \CA' ( K^\flat, K^\fcc )
     .}
 \label{commallpts1} \end{equation} 
 Here  $\iota$ is induced from the inclusion $A(W)\incl \CA^\perf(K,K^\circ) $
and the isomorphism $\CA^\perf(K,K^\circ) \cong\vpl\limits_{[p]}   \CA(K,K^\circ)$ (see Lemma \ref{faca'}).
 Here and from now on   we regard  $\vpl\limits_{[p]} A(W)$ as a subset of  $\CA^\perf (K, {K^\circ}) $ via $\iota$, 
  $A(k) $ as a subset   $ \CA' ( K^\flat, K^\fcc )  $, and 
 $\vpl\limits_{[p]} A(k)  $ as a subset of $  \CA^{\perf \flat}( K^\flat, K^\fcc )$.  
  

      \section{Proof of    Theorem \ref{MM}}\label{proof}    
       In this section, we at first   prove  a lower bound   on   prime-to-$p$  torsion points in a subvariety. Then we  prove Theorem \ref{MM}.                 Let  $k=\bar \BF_p$,  $W=W(k)$ the ring of Witt vectors, and  $L=W[\frac{1}{p}]$. 

         \subsection{Results of Poonen, Raynaud and Scanlon }\label{estimates}

   \begin{thm}[Poonen \cite{Poo}]\label{BT}
  Let  $B$  be an abelian variety defined over $k$, and $V$  an irreducible  closed   subvariety of  $B$.  Let $S$ be a finite set of primes.  Suppose that $V$ generates $B$, 
 then the composition of $$V(k)\incl B(k)\xrightarrow{\bigoplus_{l\in S}\pr_{l}} \bigoplus_{l\in S}B[l^\infty]$$ is surjective, where $\pr_l$ is the projection to the $l$-primary component.
\end{thm}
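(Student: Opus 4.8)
The plan is as follows. Since $\bigoplus_{l\in S}\pr_l$ is a group homomorphism, replacing $V$ by a translate $V-v_0$ with $v_0\in V(k)$ changes neither the hypotheses (irreducibility, and the condition that $V$ generates $B$, i.e.\ that $V$ is not contained in a coset of a proper abelian subvariety) nor the conclusion; so I may assume $0\in V$. The case $\dim B=0$ is trivial, so assume $\dim B\geq 1$, whence $\dim V\geq 1$. Fix a finite subfield $\BF_q\subseteq k$ over which $V\subseteq B$ is defined. As a preliminary I would record the standard fact that, with $0\in V$ irreducible and generating $B$, the image $\Sigma_n$ of the sum map $\sigma_n\colon V^n\to B$ equals $B$ once $n\geq\dim B$: the $\Sigma_n$ form an increasing chain of irreducible closed subvarieties, hence stabilize to some $\Sigma$ with $\Sigma+\Sigma=\Sigma$, and a closed irreducible submonoid of $B$ through $0$ is an abelian subvariety, so $\Sigma\supseteq V$ generating $B$ forces $\Sigma=B$. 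In particular every torsion point of $B$ is a sum of $\dim B$ points of $V(k)$, which records that $V$ is genuinely non-degenerate.

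The substance is a reduction to the following claim $(\star)$, in which the generating hypothesis is essential: \emph{for every finite set $S_0$ of primes, a geometrically irreducible closed subvariety $W\subseteq B$ over $\bar\BF_p$ that generates $B$ contains a $k$-rational torsion point whose order is coprime to $S_0$}. Granting $(\star)$, fix $t\in\bigoplus_{l\in S}B[l^\infty]$ and regard it inside $B(k)$; its order has all prime factors in $S$. The kernel of $\bigoplus_{l\in S}\pr_l\colon B(k)\to\bigoplus_{l\in S}B[l^\infty]$ is the subgroup of prime-to-$S$ torsion of $B(k)$. Apply $(\star)$ with $S_0=S$ to $W:=V-t$, which is geometrically irreducible (and defined over a finite field, since $t\in B(k)$) and generates $B$ (a translate generates the same abelian subvariety): this yields a torsion point $\xi\in W(k)$ whose order is coprime to $S$. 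Then $v:=t+\xi\in V(k)$, and $\xi$ is prime-to-$S$ torsion, so $(\bigoplus_{l\in S}\pr_l)(v)=(\bigoplus_{l\in S}\pr_l)(t)=t$. Hence $t$ lies in the image, and the theorem follows.

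For $(\star)$ I would argue via multiplication covers and $l$-adic monodromy. Pick a prime $l\notin S_0\cup\{p\}$; it suffices to produce an $l$-power torsion point on $W$. For $n\geq 1$ set $W_n:=[l^n]^{-1}(W)$, so that $[l^n]\colon W_n\to W$ is finite \'etale and Galois with group $B[l^n]$ acting by translations. Because $W$ generates $B$, for all $l$ outside a finite set (depending on $W$) this cover has full geometric monodromy, i.e.\ $W_n$ is geometrically irreducible; choose such an $l$. (This monodromy statement is the classical non-degeneracy input; it can be proved by analysing the image of $\pi_1(W)$ in $B[l^n]=T_lB/l^nT_lB$ and using that $W$ generates $B$.) Fix a finite field $F$ over which $W$ and $B[l^n]$ are defined. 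For finite extensions $F'/F$, the Lang--Weil estimates for the geometrically irreducible $W_n$, read through the $B[l^n]$-action, give a Chebotarev-type equidistribution for how the $l$-power torsion of $B$ meets $W$; taking $[F':F]$ highly divisible so that $v_l(\#B(F'))$ is large, one then locates a point $w\in W(F')$ lying in the $l$-Sylow subgroup of $B(F')$, which is an $l$-power torsion point on $W$.

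I expect the main obstacle to be precisely this last paragraph: establishing the full $l$-adic monodromy of $[l^n]^{-1}(W)\to W$ for a subvariety generating $B$ (for all but finitely many $l$), and then upgrading it to the genuine \emph{existence} of an $l$-power torsion point on $W$ via an equidistribution argument over $\bar\BF_p$. The delicate point is that the $l$-Sylow of $B(F')$ is small compared with $W(F')$, so a crude point count yields nothing and one genuinely needs the equidistribution coming from geometric connectedness of the covers, together with the arithmetic fact that $v_l(\#B(F'))\to\infty$ along suitably divisible $F'$. By contrast, the reductions, the sum-map geometry, and the deduction of the theorem from $(\star)$ are routine.
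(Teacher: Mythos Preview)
The paper does not prove this statement: Theorem~\ref{BT} is quoted from Poonen's paper \cite{Poo} and used as a black box (it appears in the subsection ``Results of Poonen, Raynaud and Scanlon'' with no accompanying proof). So there is no ``paper's own proof'' to compare against; I can only comment on your argument on its merits.

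Your reduction to the claim $(\star)$ is clean and correct. The monodromy assertion is in fact true for \emph{every} prime $l\neq p$, not just all but finitely many: take an alteration $\tilde W\to W$ with $\tilde W$ smooth projective; then $\tilde W\to B$ factors through a surjection $\mathrm{Alb}(\tilde W)\twoheadrightarrow B$ (since the image of $\tilde W$ is $W$, which generates $B$), whence $\pi_1(\tilde W)\to T_lB$ is surjective, and the composite $\pi_1(\tilde W)\to\pi_1(W)\to T_lB$ forces $\pi_1(W)\to T_lB$ to be surjective as well.

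The genuine gap is the second step. Equidistribution of $W(F')$ among the cosets of $l^nB(F')$ (which is what irreducibility of $W_n$ gives via Lang--Weil) says nothing about whether $W(F')$ meets the specific \emph{transversal} $B(F')[l^\infty]$ to those cosets; your heuristic ``take $v_l(\#B(F'))$ large'' goes in the wrong direction for this. A workable variant is the opposite: fix $W$ over $\BF_q$, and choose $r$ coprime to every prime in $S_0$. A lifting-the-exponent computation shows that the $S_0$-part of $\#B(\BF_{q^r})$ is then bounded by a fixed integer $M'$ depending only on the Frobenius eigenvalues. Now apply your monodromy result to the composite $[M']$ (using CRT across the primes dividing $M'$, all $\neq p$) to see that $[M']^{-1}(W)$ is geometrically irreducible over $\BF_q$; Lang--Weil then gives $b\in[M']^{-1}(W)(\BF_{q^r})$ for large such $r$, and $w:=M'b\in W(\BF_{q^r})$ has vanishing $S_0$-part because $M'$ kills the $S_0$-Sylow of $B(\BF_{q^r})$. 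This proves $(\star)$ when $p\notin S_0$; the case $p\in S_0$ (which is the one actually used in the paper, with $S=\{p,l\}$) needs a small extra argument separating the \'etale and connected parts of $[p^n]$.
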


      Let $A$ be   an abelian scheme   over $W$.   
Let $T=\bigcap\limits_{n=0}^{\infty}p^n(A(L)[p^\infty]),$   the maximal divisible subgroup of  $A(L)[p^\infty]$.  
Though  not needed, as an illustration, we note that by \cite[Exemples 5.2.3]{Ray83b}, $T=0$ if the $p$-rank of $A_k$ is 0 or if $A$ is a ``general ordinary abelian variety", and $T=A(L)[p^\infty]\cong  L/\BZ_p^{\dim A_L}$ if $A$ is the canonical lifting in Serre-Tate theory, see \ref {Classical Serre-Tate theory}.  
    \begin{lem} [{Raynaud \cite[Lemma 5.2.1]{Ray83b}}] \label{T''} %
  (1)  Let $T_o$ be the      subgroup of $A(\bar{L})[p^\infty]$ coming from the connected component of  the $p$-divisible group of $A $,
   then $T_o\bigcap T=0$.   
   
   (2) As a subgroup of $A(  \bar L)[p^\infty]$,
   $T$ is  a $\Gal(\bar{L}/L)$-direct summand.
   
      \end{lem}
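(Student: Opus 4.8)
The plan is to organize everything around the connected--\'etale sequence of the $p$-divisible group $G:=A[p^\infty]$ over $W$. Over $W$ we have $0\to G^\circ\to G\to G^{\et}\to 0$; since $k=\bar\BF_p$ is algebraically closed, $\Spec W$ is strictly henselian, so $G^{\et}$ is a \emph{constant} $p$-divisible group, $G^{\et}\cong(\BQ_p/\BZ_p)^s$. In particular $\Gal(\bar L/L)$ acts trivially on $G^{\et}(\bar L)=G^{\et}(L)$, and taking $\bar L$-points of the sequence gives a $\Gal(\bar L/L)$-equivariant short exact sequence
\[
0\to T_o\to A(\bar L)[p^\infty]\longby{q}(\BQ_p/\BZ_p)^s\to 0 ,
\]
in which $T_o=G^\circ(\bar L)$ is precisely the subgroup in the statement. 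Note also that $T=\bigcap_n p^nA(L)[p^\infty]$ is the maximal divisible subgroup of the cofinitely generated $\BZ_p$-module $A(L)[p^\infty]=\bigl(V_pA/T_pA\bigr)^{\Gal(\bar L/L)}$, hence divisible and $\Gal(\bar L/L)$-fixed.

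For (1) I would prove the sharper statement that $q$ is injective on $T$, i.e. $T\cap T_o=0$. The input is that the connected part of $G$ carries essentially no Galois-invariant classes: by Tate's theorem on $p$-divisible groups, $\Hom_{\Gal(\bar L/L)}(\BZ_p,T_pG^\circ)=\Hom_{p\text{-div}/W}(\BQ_p/\BZ_p,G^\circ)=0$, since a homomorphism from an \'etale $p$-divisible group to a connected one vanishes; equivalently $(V_pG^\circ)^{\Gal(\bar L/L)}=0$ and $G^\circ(\bar L)^{\Gal(\bar L/L)}=\hat G^\circ(\fm_L)_{\tor}$ is finite. One then has to propagate this to the divisible subgroup $T$: any nonzero $x\in T\cap T_o$ lies, by divisibility of $T$, in a copy of $\BQ_p/\BZ_p$ inside $A(L)[p^\infty]$, and pushing this copy through $q$ and feeding back the finiteness (resp.\ vanishing) of $\hat G^\circ(\fm_L)_{\tor}$ yields a contradiction. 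This is the point where good reduction over the \emph{unramified} base $W$ (so $L=\widehat{\BQ_p^{\ur}}$) is essential: it is what makes the connected finite flat group schemes $G^\circ[p^n]$ generate totally ramified, hence $L$-trivial (for $p$ odd), extensions. For $p=2$ the group $\hat G^\circ(\fm_2)_{\tor}$ can be a nonzero finite $2$-group (copies of $\mu_2$), and one argues separately --- e.g.\ after adjoining $\mu_4$ --- that it still meets no embedded $\BQ_2/\BZ_2$; I expect this subcase to be the one requiring genuine care. The outcome is $T\hookrightarrow G^{\et}(\bar L)$.

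Granting (1), part (2) is formal. The image $q(T)$ is a divisible subgroup of $(\BQ_p/\BZ_p)^s$, hence a direct summand (divisible abelian groups are injective $\BZ$-modules): write $G^{\et}(\bar L)=q(T)\oplus C$ and set $T':=q^{-1}(C)$. Since $q$ is $\Gal(\bar L/L)$-equivariant and $\Gal(\bar L/L)$ acts trivially on $G^{\et}(\bar L)$, the subgroup $T'$ is $\Gal(\bar L/L)$-stable; it contains $\ker q=T_o$; from $q(T+T')=G^{\et}(\bar L)$ together with $T_o\subseteq T'$ we get $T+T'=A(\bar L)[p^\infty]$; and $q(T\cap T')\subseteq q(T)\cap C=0$ forces $T\cap T'\subseteq T\cap T_o=0$ by (1). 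Hence $A(\bar L)[p^\infty]=T\oplus T'$ with $T'$ a $\Gal(\bar L/L)$-submodule, which is (2).

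The genuine obstacle is the analysis of the connected part $G^\circ$ in step (1) --- showing that the maximal divisible $L$-rational torsion avoids it. The rest is the standard structure theory of $p$-divisible groups with good reduction together with elementary homological algebra of divisible groups; this one step is exactly where the unramifiedness of $W$ enters, and where (for $p=2$) an extra argument is needed.
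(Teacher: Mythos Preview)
The paper does not prove this lemma; it is cited directly from Raynaud \cite[Lemma~5.2.1]{Ray83b}, so there is no in-paper argument to compare against. Your deduction of (2) from (1) is correct and efficient: once $q|_T$ is injective, $q(T)$ is a divisible (hence injective) subgroup of the constant group $(\BQ_p/\BZ_p)^s$, and pulling back a complement yields the $\Gal(\bar L/L)$-stable $T'$ with $A(\bar L)[p^\infty]=T\oplus T'$.

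For (1) your ingredients are right, but the argument can be sharpened and has a genuine gap at $p=2$. When $p>2$ it is simpler than you indicate: since each $G^\circ[p^n]$ is finite over $W$ one has $G^\circ(L)=G^\circ(W)=\hat G^\circ(pW)_{\tor}$, and the formal logarithm gives an isomorphism on the disc of radius $p^{-1/(p-1)}>|p|$, so $\hat G^\circ(pW)$ is torsion-free and $T_o\cap A(L)[p^\infty]=0$ outright --- the $\BQ_p/\BZ_p$-copy detour (and Tate's theorem) are not needed. For $p=2$, however, your step ``feeding back the finiteness \dots\ yields a contradiction'' does not close: finiteness of $G^\circ(L)$ only says $q|_T:T\to(\BQ_p/\BZ_p)^s$ has finite kernel, and a homomorphism $(\BQ_p/\BZ_p)^r\to(\BQ_p/\BZ_p)^s$ can certainly have nonzero finite kernel (e.g.\ multiplication by $p$ on $\BQ_p/\BZ_p$). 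You rightly flag this as the delicate case; note that every application of this lemma in the present paper (Corollary~\ref{T'''}, Proposition~\ref{mainprop}, Theorem~\ref{curveMM1}) takes $p>2$, so the gap is harmless for the paper's purposes.
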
 
 Note that  \begin{equation}\bigcap\limits_{n=0}^{\infty}p^n(A(W)_\tor)= A(W)_{p'-\tor}\bigoplus \bigcap\limits_{n=0}^{\infty}p^n(A(W)[p^\infty]).\label{shabi}\end{equation}

     \begin{cor} \label{T'''} 
        The following reduction map is injective $$\red:   \bigcap\limits_{n=0}^{\infty}p^n(A(W)_\tor)   \to   A(k).$$
\end{cor}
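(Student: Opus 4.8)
The plan is to split the group $\bigcap_{n=0}^\infty p^n(A(W)_\tor)$ into its prime-to-$p$ and $p$-primary parts via \eqref{shabi}, check injectivity of $\red$ on each summand separately, and then recombine using that reduction preserves orders. On the prime-to-$p$ summand $A(W)_{p'-\tor}$ the reduction map is even an isomorphism onto $A(k)_{p'-\tor}$ by Lemma \ref{NOS}(2), so nothing is in its kernel there.

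The substantive step is the $p$-primary summand $H:=\bigcap_{n=0}^\infty p^n(A(W)[p^\infty])$. First I would note that $H\subseteq T$: indeed $A(W)[p^\infty]\subseteq A(L)[p^\infty]$, hence $p^n(A(W)[p^\infty])\subseteq p^n(A(L)[p^\infty])$ for every $n$, and intersecting over $n$ gives $H\subseteq T$. Next, the kernel of $\red\colon A(W)\to A(k)$ is the subgroup $\wh A(pW)$ of points in the formal group of $A$; a $p$-power torsion point in this kernel is a torsion point of the formal group, hence lies in the $\bar L$-points of the connected component $A[p^\infty]^\circ$ of the $p$-divisible group of $A$, i.e.\ in $T_o$. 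Therefore $\ker(\red|_H)=H\cap\ker(\red)\subseteq H\cap T_o\subseteq T\cap T_o$, which is $0$ by Lemma \ref{T''}(1). So $\red$ is injective on $H$.

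Finally I would recombine: write $x\in\bigcap_n p^n(A(W)_\tor)$ as $x=x_{p'}+x_p$ with $x_{p'}\in A(W)_{p'-\tor}$ and $x_p\in H$. Since $\red(x_{p'})\in A(k)_{p'-\tor}$ and $\red(x_p)\in A(k)[p^\infty]$, and $A(k)_{p'-\tor}\cap A(k)[p^\infty]=0$, the vanishing of $\red(x)$ forces $\red(x_{p'})=0$ and $\red(x_p)=0$, whence $x_{p'}=0$ and $x_p=0$ by the two previous steps. The main obstacle is the identification $\ker(\red|_{A(W)[p^\infty]})\subseteq T_o$: it rests on the connected–\'etale sequence of $A[p^\infty]$ over $W$ and the integrality of torsion points of a formal group over a complete discrete valuation ring, which is classical (and is precisely the input one extracts from \cite{ST}), but it is the one place where care is needed; once granted, Raynaud's Lemma \ref{T''}(1) does the rest.
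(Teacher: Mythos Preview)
Your proof is correct and is exactly the argument the paper intends: the corollary is stated without proof, immediately after \eqref{shabi} and Lemma~\ref{T''}, and your decomposition via \eqref{shabi}, invocation of Lemma~\ref{NOS}(2) on the prime-to-$p$ part, and use of $\ker(\red)\cap A(W)[p^\infty]\subseteq T_o$ together with Lemma~\ref{T''}(1) on the $p$-primary part is precisely how one is meant to assemble these ingredients. One cosmetic remark: since $A$ is proper over $W$ you actually have $A(W)=A(L)$, so $H=T$ rather than merely $H\subseteq T$, but your inclusion suffices.
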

   
 
  Let   $Z \subset A_L$ be a closed subvariety. 
   \begin{lem} [{Raynaud \cite[8.2]{Ray83}}]\label{finalestimate} Let  $T'$ be  a $\Gal(\bar{L}/L)$-direct summand  such that as $\Gal(\bar{L}/L)$-modules
   $$ A(\bar{L})_{ \tor}= A(\bar{L})_{p'-\tor}\bigoplus T\bigoplus T' .$$ 
    If $Z$ does not contain  any translate of   a nontrivial abelian subvariety of $A_L$,
   there exists a positive integer $N$ such that 
   the order of the $T'$-component of every element in $Z(\bar{L})_\tor$ divides $p^N$.
    \end{lem}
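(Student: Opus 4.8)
The plan is to reduce the statement to the structure of $p$-primary torsion on an abelian scheme with good reduction, following the circle of ideas in \cite{Ray83} and the Serre--Tate decomposition $A(\bar L)[p^\infty] = T_o \oplus T$ from Lemma \ref{T''}, where $T_o$ is the connected (``formal/canonical'') part. Here $T'$ is, up to the prime-to-$p$ summand, a complement to $T$ inside $A(\bar L)[p^\infty]$, so the $T'$-component of a torsion point is essentially its image in $A(\bar L)[p^\infty]/T \cong T_o$. Thus the assertion to be proved is: if $Z$ contains no translate of a positive-dimensional abelian subvariety, there is $N$ with $p^N \cdot (\text{image of } Z(\bar L)_\tor \text{ in } T_o) = 0$, i.e. the ``formal part'' of the torsion on $Z$ has bounded exponent.

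First I would set up the Galois-theoretic framework: by Lemma \ref{T''}(1),(2), $T$ is a $\Gal(\bar L/L)$-direct summand of $A(\bar L)[p^\infty]$ meeting $T_o$ trivially, and we choose the complementary summand so that $A(\bar L)_\tor = A(\bar L)_{p'\text{-}\tor} \oplus T \oplus T'$ as Galois modules. The key input is the action of an inertia (or decomposition) subgroup at $p$: because $A$ has good reduction over $W$, the connected part $T_o$ carries a Galois action by a group whose image contains a substantial part of $\BZ_p^\times$ acting by a nontrivial power of the cyclotomic character (this is where $\mu_{p^\infty}\subset L^\cycl$ and the formal-group structure enter), while $T$ is unramified. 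One then argues that if infinitely many $T'$-components of torsion points in $Z(\bar L)$ had unbounded order, one could produce, using the Galois action on these ``formal'' components together with a limiting/compactness argument on the $p$-adic Tate module, a nontrivial algebraic subgroup $B \subset A_L$ together with a torsion point $t$ such that the translate $t + B$ is contained in $Z$ — contradicting the hypothesis. Concretely, the Galois orbit of a torsion point with large $T'$-component, after dividing by powers of $p$, accumulates (Zariski-closure argument) on a coset of the formal/divisible subgroup it generates, and the hypothesis on $Z$ forces that accumulation to be trivial, bounding $N$.

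The technical heart is the Zariski-density / ``orbit-to-subgroup'' step: given a sequence of torsion points $x_n \in Z(\bar L)$ whose $T'$-component has order $p^{e_n}$ with $e_n \to \infty$, one must show the Zariski closures of their Galois orbits stabilize to translates of a fixed nontrivial abelian subvariety contained in $Z$. I would do this by the standard argument that $\overline{\Gal(\bar L/L)\cdot x_n}^{\Zar}$ is a finite union of torsion cosets, and that for $e_n$ large the Galois action through the cyclotomic-type character on the $T'$-part forces one of these cosets to have positive dimension; then a Noetherian/pigeonhole argument over the (finitely many) possible abelian subvarieties yields a single $B$ appearing infinitely often, hence $t + B \subset Z$. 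The main obstacle I anticipate is making the inertia action on $T'$ (equivalently, on the connected $p$-divisible group) explicit enough — one needs that the image of inertia acting on $T_o[p^N]$ is large (roughly $(\BZ/p^N)^\times$ up to bounded index) so that a point with $T'$-component of exact order $p^N$ has a Galois orbit large enough to ``spread out'' into a positive-dimensional subvariety; controlling this uniformly in $N$, and separating the genuinely formal directions from the unramified ones, is the delicate part, and is exactly where one invokes the good-reduction hypothesis and the structure theory of $p$-divisible groups over $W$ as in \cite{Ray83}, \cite{Ray83b}.
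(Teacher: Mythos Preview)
The paper does not prove this lemma; it is cited from Raynaud \cite[8.2]{Ray83}, and only the remark following it indicates its role. So there is no in-paper argument to compare your proposal against --- you are attempting to reconstruct Raynaud's original reasoning.

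Your setup is correct: $T'$ may be taken to be (or to project isomorphically onto) the connected part $T_o$, inertia acts there through a large image while fixing $A(\bar L)_{p'\text{-}\tor}\oplus T$, and one wants to convert this asymmetry into a bound on the $T'$-component. The gap is in the step you yourself flag as the ``technical heart''. You propose to use ``the standard argument that $\overline{\Gal(\bar L/L)\cdot x_n}^{\Zar}$ is a finite union of torsion cosets'' and then conclude that for $e_n$ large one such coset has positive dimension. Taken literally this is vacuous: the Galois orbit of a single torsion point is \emph{finite}, hence equal to its own Zariski closure, and no component is positive-dimensional. If instead you mean to take the Zariski closure of $\bigcup_n \Gal\cdot x_n$ and assert that \emph{this} decomposes as a finite union of torsion cosets, you are invoking precisely the Manin--Mumford conjecture, which is what the surrounding section is working to prove. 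Either reading leaves the argument circular or empty.

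What is actually needed is a mechanism that passes from ``$Z$ contains the full inertia orbit $\{y_n+\alpha z_n : \alpha\in 1+p^c\BZ_p\}$ with $\ord(z_n)\to\infty$'' to ``$Z$ contains a translate of a positive-dimensional abelian subvariety'' \emph{without} assuming Manin--Mumford. Raynaud supplies this in \cite[\S8]{Ray83} by a direct $p$-adic argument in the formal-group directions. An alternative in the Bogomolov--Hindry style --- comparing the inertia-orbit size against an intersection-theoretic bound on $|Z\cap A[p^N]|$ --- can also be made to work, but it requires both the homothety input (which you mention) and a uniform degree estimate (which you do not); neither is the ``standard'' fact you cite.
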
     
    \begin{rmk}Lemma  \ref{finalestimate} is used by Raynaud \cite{Ray83} to reduce the Manin-Mumford conjecture to a theorem (see  \cite[Theorem 3.5.1]{Ray83}) obtained 
   by studying $p$-adic rigid analytic properties  of universal vector extension
      of an abelian variety.
    \end{rmk}
 Let  $K$ and $K^\flat$  be the  perfectoid fields in \ref{perftheory}.
 Let  $\CA$ be the adic generic fiber of $A_{K^\circ}$. Let  $Z^\Zar$ be the Zariski closure of 
 $Z$ in $A$, and  $\CZ$ the adic generic fiber of   $Z^\Zar_{K^\circ}$,   .
   For $\ep\in K^\times, $ let $\CZ_\ep$ be  the $\ep$-neighborhood of $Z_K$ in $\CA$ as in  Definition \ref {globalnbhd}.
 By Lemma \ref{distep}, a result  of Scanlon   \cite{Sca0} on the Tate-Voloch conjecture implies the following lemma.
   \begin{lem}[Scanlon   \cite{Sca0}] \label{Scalem}There exists  $\ep\in K^\times, $  such that  $\CA(K,K^\circ)_{p'-\tor}\bigcap \CZ_\ep\subset \CZ.$

   \end{lem}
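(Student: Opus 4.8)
The plan is to deduce Lemma~\ref{Scalem} from the original Tate–Voloch theorem of Scanlon \cite{Sca0} by translating between the two notions of ``$p$-adic closeness'': the distance function $d(\cdot,Z)$ defined in \ref{The distance function} and the membership relation in the tube neighborhoods $\CZ_\ep$. First I would invoke Scanlon's theorem in the form it was actually proved: for the prime-to-$p$ torsion subgroup $A(\bar K)_{p'-\tor}$ of $A_L$ (equivalently $A_{K^\circ}$, since $K/L$ is a field extension), there is a constant $c>0$ such that every prime-to-$p$ torsion point $x$ with $d(x,Z)\le c$ already lies in $Z$. This is precisely the statement that $A(K,K^\circ)_{p'-\tor}$ is (the prime-to-$p$ part of) a Tate–Voloch type set in the sense of Definition~\ref{TVtype}, a fact recorded in \cite[Section 1]{Sca0}; the mild point to check is that Scanlon's ambient scheme and mine differ by a choice of integral model and affine cover, which by the comparison in \ref{The distance function} changes $d$ only by a bounded multiplicative factor, so the existence of a uniform $c$ is unaffected.

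Next I would translate $d(x,Z)\le c$ into membership in a tube neighborhood. Choose $\ep\in K^\times$ with $\|\ep\|\le c$ (possible since $K$ is nondiscrete). By Lemma~\ref{distep}, for a point $x$ coming from $x^\circ\in A_{K^\circ}(K^\circ)$ lying on an affine open $U$ flat over $K^\circ$, one has $x\in\CZ_{U,\ep}$ iff $d_U(x,\CI(U))\le\|\ep\|$, and by the independence statement following Lemma~\ref{distep} this $d_U$ equals the globally defined $d(x,Z)$. Since $A_{K^\circ}$ is covered by finitely many such flat affine opens and $\CZ_\ep=\bigcup_U\CZ_{U,\ep}$ by Definition~\ref{globalnbhd}, a point $x\in\CA(K,K^\circ)$ with $d(x,Z)\le\|\ep\|$ lies in some $\CZ_{U,\ep}$, hence in $\CZ_\ep$. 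Conversely, membership in $\CZ_\ep$ forces $d(x,Z)\le\|\ep\|\le c$. Thus $\CA(K,K^\circ)_{p'-\tor}\cap\CZ_\ep$ consists of prime-to-$p$ torsion points at distance $\le c$ from $Z$, and Scanlon's theorem puts each of them in $Z$, i.e.\ in $\CZ$ (more precisely in $\CZ(K,K^\circ)$, identified inside $\CZ$ via Lemma~\ref{algpoints}).

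The only genuine subtlety — and the step I would flag as the main obstacle — is bookkeeping about which points of $\CA(K,K^\circ)$ actually ``have a reduction'': the distance function $d(x,Z)$ in \ref{The distance function} is defined for $x$ arising from a $K^\circ$-point $x^\circ$ of the model, i.e.\ for $x\in\CA(K,K^\circ)$ under the identification of Lemma~\ref{algpoints}, and one must make sure every prime-to-$p$ torsion point of $\CA(K,K^\circ)$ is of this form. This is exactly Lemma~\ref{algpoints}: $\CA(K,K^\circ)\cong\fA(K^\circ)$, where $\fA$ is the $\varpi$-adic formal completion of $A_{K^\circ}$, so every point of $\CA(K,K^\circ)$ spreads out to an integral point and the distance is defined. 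With that in hand the argument is complete: take $\ep$ with $\|\ep\|\le c$, and then $\CA(K,K^\circ)_{p'-\tor}\cap\CZ_\ep\subset\CZ$ as claimed. I expect the write-up to be short, citing \cite{Sca0}, Lemma~\ref{distep}, Lemma~\ref{algpoints}, and Definition~\ref{globalnbhd}, with the comparison-of-models remark from \ref{The distance function} handling the discrepancy between Scanlon's formulation and ours.
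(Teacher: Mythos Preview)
Your proposal is correct and matches the paper's approach exactly: the paper itself gives no proof block for this lemma, only the sentence ``By Lemma~\ref{distep}, a result of Scanlon~\cite{Sca0} on the Tate-Voloch conjecture implies the following lemma'' immediately preceding the statement. You have simply unpacked that sentence---Scanlon's bound on $d(x,Z)$ for prime-to-$p$ torsion, the translation to tube neighborhoods via Lemma~\ref{distep}, and the identification via Lemma~\ref{algpoints}---which is precisely the intended argument.
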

  \begin{rmk} 
  
  The proofs of Poonen's  result   and Scanlon 's  result  are   independent of Theorem \ref{MM}.

           \end{rmk} 

  \subsection{A   lower bound  }\label{the proof of  Proposition}
           
Define \begin{equation}  \Lambda  :=Z^\Zar(W)\bigcap   \bigcap\limits_{n=0}^{\infty}p^n(A(W) _\tor),\ 
\Lambda_\infty :=\iota\left( \pi_0^{-1}(\Lambda)\right) ,\label{deflambda}\end{equation}    
where $\pi_0$ and $\iota$ are  as in  the left column of diagram \eqref{commallpts1}.
Then $\rho( \Lambda_\infty)$ is contained in (the image of) $  \vpl\limits_{[p]}  A( k) $ by diagram \eqref{commallpts1}. Now let $ \Lambda_n :=\pi_n'(\rho( \Lambda_\infty)) $. Then $\Lambda_n$ is  contained in (the image of)  $A( k)$. 
   Let $\Lambda^\Zar_n$ be  the Zariski closure of $\Lambda _n$ 
   in $A_{k}$.

        \begin{prop} \label{techprop} There exists a positive integer $n$ such that   
$$\pi_0\left(\rho^{-1}\left({\pi_n'}^{-1}\left(\Lambda _n^\Zar (k) _{p'-\tor} \right)\right )\right)\bigcap     \CA ( K,K^\circ)_{p'-\tor} \subset \CZ .$$ 
  \end{prop}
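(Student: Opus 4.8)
The plan is to apply Proposition \ref{techprop2} with a well-chosen perfectoid affinoid piece and then translate the conclusion through the commutative diagram \eqref{commallpts1}. First I would cover $A_{K^\circ}$ by finitely many affine opens and, for each one $\Spf R_0^+\subset \widehat{A_{K^\circ}}$, form the perfectoid universal cover as in Lemma \ref{A.16}, so that $\CA^\perf$ is glued from $\Spa(R,R^+)$'s with tilts $\Spa(R^\flat,R^\fpl)$. The key observation is that Assumptions \ref{asmp4} and the hypotheses of \ref{subTilting and reduction} are met here: $R^\fpl$ is the $\varpi^\flat$-adic completion of $S\otimes K^{\flat\circ}$ where $S$ is the (infinitely generated) $k$-algebra obtained as $\bigcup_n S_n$, with $S_n$ the coordinate ring of $([p]^n)^{-1}$ of the special fiber of the chosen affine open. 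Under the identifications of \ref{subTilting and reduction}, the maps $\pi_n\colon \CA^{\perf\flat}\to\CA_n'$ of that subsection match the $\pi_n'$ in our diagram, and $(\Spec S_n)(k)$ sits inside $A_k([p]^n\text{-preimage})(k)$, i.e. inside $\vpl_{[p]} A(k)$ componentwise.

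Next I would pick the function $f$. We want $\Xi=\{x\in\CA^\perf:|f(x)|=0\}$ to detect $\CZ$. So, working affine-locally, let $f$ run over a finite set of generators of the ideal $\CI$ cutting out the schematic closure $Z^\Zar$ inside each affine chart of $A_{K^\circ}$; more precisely, pull $\CI$ back to $R^+$ via $\pi_0$ and take generators there. With $\Lambda_n=\pi_n'(\rho(\Lambda_\infty))$ as defined just before the statement, and $\Lambda_n^\Zar$ its Zariski closure in $A_k$, I need the hypothesis of Proposition \ref{techprop2}, namely $\Lambda_n\subset \pi_n(\rho(\Xi))$ for each $n$. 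This holds because every point of $\Lambda_\infty=\iota(\pi_0^{-1}(\Lambda))$ maps under $\pi_0$ into $\Lambda\subset Z^\Zar(W)$, hence lies in the vanishing locus of $\CI$, hence is in $\Xi$; applying $\rho$ and then $\pi_n'$ gives $\Lambda_n\subset \pi_n(\rho(\Xi))$ exactly. Proposition \ref{techprop2} then yields, for the $\ep\in K^\times$ coming from Scanlon's Lemma \ref{Scalem}, an integer $n$ so that $|f(x)|\le\|\ep\|$ for every $x\in (\pi_n\circ\rho)^{-1}(\Lambda_n^\Zar(k))$, for each chosen $f$.

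From this I would conclude as follows. A point $y$ lying in $\pi_0(\rho^{-1}({\pi_n'}^{-1}(\Lambda_n^\Zar(k)_{p'-\tor})))\cap \CA(K,K^\circ)_{p'-\tor}$ is, by construction, the $\pi_0$-image of some $\tilde y\in (\pi_n\circ\rho)^{-1}(\Lambda_n^\Zar(k))$ which moreover is prime-to-$p$ torsion in $\CA^\perf(K,K^\circ)$ (using Lemma \ref{4isoms} and Proposition \ref{perfgroup} to see that the prime-to-$p$ torsion is preserved by $\rho$ and the $\pi_n'$). The inequality from Proposition \ref{techprop2}, applied to every generator $f$ of the pulled-back ideal and assembled over the finite affine cover via Definition \ref{globalnbhd} and Lemma \ref{pullback1}, says precisely that $y\in\CZ_\ep$. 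Since $y$ is prime-to-$p$ torsion, Lemma \ref{Scalem} forces $y\in\CZ$. The main obstacle, and the place requiring the most care, is the bookkeeping between the several affine charts: Proposition \ref{techprop2} is stated for a single perfectoid affinoid $\Spa(R,R^+)$, so I must run it on each chart, keep the choice of $c$ (hence $n$) uniform by taking the maximum over the finitely many charts and finitely many generators $f$, and then patch the resulting $\ep$-neighborhoods using Lemmas \ref{intersect}, \ref{union}, \ref{pullback1} and Definition \ref{globalnbhd} so that the combined statement genuinely says $y\in\CZ_\ep$ for the global $\CZ$. One also has to check compatibility of the perfectoid-cover charts with the $\pi_n$'s and with reduction, but this is exactly what \ref{subTilting and reduction} and diagram \eqref{commallpts1} were set up to provide.
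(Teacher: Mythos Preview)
Your proposal is correct and follows essentially the same approach as the paper: cover $A$ by finitely many affines flat over $W$, restrict $\CA^\perf$ over each to get a perfectoid affinoid whose tilt satisfies Assumption~\ref{asmp4}, apply Proposition~\ref{techprop2} to each generator $f$ of $\CI(U)$ to land in $\CZ_\ep$, and finish with Scanlon's Lemma~\ref{Scalem}. Two small remarks: the paper actually proves the slightly stronger containment $\pi_0(\rho^{-1}({\pi_n'}^{-1}(\Lambda_n^\Zar(k))))\subset\CZ_\ep$ without first intersecting with $p'$-torsion, so your appeal to Lemma~\ref{4isoms} and Proposition~\ref{perfgroup} for torsion-preservation is not needed; and the patching over charts requires only Definition~\ref{globalnbhd} (and implicitly Lemma~\ref{pullback1}), not Lemmas~\ref{intersect} or~\ref{union}, which concern intersections and unions of closed formal subschemes rather than gluing over an open cover.
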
 
     \begin{proof}  
     Let $\CU$ be a  finite affine open  cover of $A$  by  affine open subschemes  flat  over $W$.     Let $U\in \CU$. The restriction of $\CA^\perf$ over the adic generic fiber of $U_{K^\circ} $ is a perfectoid  space $\CX=\Spa(R,R^+)$ whose tilt  satisfies Assumption \ref{asmp4} (see Lemma \ref{A.16} and the discussion above it).
     Let $\CI$ be the ideal sheaf of  $Z^\Zar$.
     Let $f\in \CI(U)$. Regard $f$ as in $R$. 
     By  definition of $\Lambda _n$, we can apply 
         Proposition \ref{techprop2}  to  $f$ and $\Lambda _n$. 
         Varying $U$ in $\CU$ and varying  $f$ in a finite set of   generators of $\CI(U)$,
        Proposition \ref{techprop2} implies that  for every $\ep\in K^\times, $  there exists a positive integer $n$ such that   
$$\pi_0\left(\rho^{-1}\left({\pi_n'}^{-1}\left(\Lambda _n^\Zar (k)  \right)\right )\right)\subset \CZ_\ep .$$ 
Then   Proposition \ref{techprop}  follows from   Lemma \ref{Scalem}.
                \end{proof}

         Our lower bound  on the size of the set of prime-to-$p$ torsions in $Z$ is as follows.

     \begin{prop}\label{mainprop} Let  $p>2$.   Assume that $Z$ contains the unit $0\in A_L$.
 
                        (1) Assume  $\Lambda$ is infinite.  For every   prime number $l\neq p$, the image of the composition of
    \begin{equation}   Z^\Zar(W) _{p'-\tor}\incl A(W)_{p'-\tor}\xrightarrow{\pr_l} A(W)[l^\infty]
    \label{intro3}
 \end{equation} 
contains a translate of a free $\BQ_l/\BZ_l$-submodule of $A(W)[l^\infty]
$ of rank at least  $2$.
Here the  map $\pr_l$ is the projection to the $l$-primary component.
   
   (2)        Assume that the image of the composition of     \begin{equation*}  \Lambda\incl A(W)_{\tor}\xrightarrow{\pr_p} A(W)[p^\infty]  \end{equation*}  
   contains a translate of a free $ L/\BZ_p$-submodule of rank $r$.  For every   prime number $l\neq p$, the image of the composition  of \eqref {intro3}
          contains a translate of  a free $\BQ_l/\BZ_l$-submodule of $A(W)[l^\infty]
$ of rank  $2r$.

  \end{prop}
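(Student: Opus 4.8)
The plan is to transport the statement, via Proposition \ref{techprop} and the reduction--tilting diagram \eqref{commallpts1}, into a question over $k=\bar\BF_p$ about prime-to-$p$ torsion on the variety $\Lambda_n^\Zar$, and then feed that to Poonen's theorem \ref{BT}.

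First I would record what Proposition \ref{techprop} gives. Fix $n$ as in that proposition and take a prime-to-$p$ torsion point $\alpha$ of the $k$-variety $\Lambda_n^\Zar$, so $\alpha\in A(k)_{p'-\tor}$. Put $\beta:=\red^{-1}(\alpha)\in A(W)_{p'-\tor}$, which is legitimate since $\red$ is a group isomorphism $A(W)_{p'-\tor}\cong A(k)_{p'-\tor}$ (Lemma \ref{NOS}). Running $\beta$ through the isomorphisms of diagram \eqref{commallpts1} and using commutativity, the image of $\beta$ in $\CA(K,K^\circ)$ is a prime-to-$p$ torsion point lying in $\pi_0\bigl(\rho^{-1}({\pi_n'}^{-1}(\alpha))\bigr)$, hence lies in $\CZ$ by Proposition \ref{techprop}; as $\beta\in A(W)$, this forces $\beta\in Z^\Zar(W)$. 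Thus $\Lambda_n^\Zar(k)_{p'-\tor}\subset\red\bigl(Z^\Zar(W)_{p'-\tor}\bigr)$. Since $\red$ respects the decomposition into $l$-primary parts, it is enough to exhibit inside $\pr_l\bigl(\Lambda_n^\Zar(k)_{p'-\tor}\bigr)\subset A(k)[l^\infty]$ a translate of a free $\BQ_l/\BZ_l$-submodule of rank $\geq 2$ (for (1)) resp. $\geq 2r$ (for (2)); pulling back along $\red$ then finishes.

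Next I would bound $\dim\Lambda_n^\Zar$ below. For (1): $\Lambda$ is infinite and $\red$ is injective on $\bigcap_m p^m(A(W)_\tor)\supset\Lambda$ (Corollary \ref{T'''}), so $\Lambda_0=\red(\Lambda)$ is infinite; as $[p^n]\colon\Lambda_n\to\Lambda_0$ is surjective with fibers inside cosets of the finite group $A(k)[p^n]$, $\Lambda_n$ is infinite and $\dim\Lambda_n^\Zar\geq 1$. For (2): applying $\red$ to the hypothesis produces a translate of a rank-$r$ free $\BQ_p/\BZ_p$-submodule inside the $p$-primary part of $\red(\Lambda)$; the Zariski closure of a rank-$r$ subgroup of $p$-power torsion has $p$-rank $\geq r$, hence dimension $\geq r$, and this survives passing to $[p^n]$-roots and then to Zariski closures, so $\dim\Lambda_n^\Zar\geq r$. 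Now pick an irreducible component $V$ of $\Lambda_n^\Zar$ of maximal dimension and let $B\subset A_k$ be the abelian subvariety generated by the differences $V-V$; then $\dim B\geq\dim V$, and $V-v_0$ generates $B$ as soon as $V-v_0\subset B$.

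The hard part will be the choice of base point in the application of Poonen: to keep the argument inside the prime-to-$p$ torsion I need $v_0$ itself to be a prime-to-$p$ torsion point with $V-v_0\subset B$. The admissible $v_0$ form one coset of $B$, the one with class $q(V)\in A_k/B$ (where $q\colon A_k\to A_k/B$), and this coset meets $A(k)_{p'-\tor}$ exactly when $q(V)$ is prime-to-$p$ torsion. Here I would use that $\Lambda_n^\Zar$ is invariant under translation by the finite group $A[p^n](k)$ and that $q$ carries $A[p^n](k)$ onto $(A_k/B)(k)[p^n]$: after replacing $V$ by a suitable $A[p^n](k)$-translate (again a component of $\Lambda_n^\Zar$, with the same $B$) and taking $n$ large enough, one arranges $q(V)$ to be prime-to-$p$ torsion and picks a prime-to-$p$ torsion $v_0$ in its $B$-coset. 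Then Theorem \ref{BT}, applied to $V-v_0\subset B$ with $S=\{l,p\}$, makes $(V-v_0)(k)\to B[l^\infty]\oplus B[p^\infty]$ surjective, so for each $\beta\in B[l^\infty]$ there is $w\in(V-v_0)(k)$ with $\pr_l(w)=\beta$ and $\pr_p(w)=0$; then $v_0+w\in V(k)$ is prime-to-$p$ torsion with $\pr_l(v_0+w)=\pr_l(v_0)+\beta$. Hence $\pr_l\bigl(V(k)_{p'-\tor}\bigr)\supset\pr_l(v_0)+B[l^\infty]$, a translate of $B[l^\infty]\cong(\BQ_l/\BZ_l)^{2\dim B}$, i.e. a free $\BQ_l/\BZ_l$-submodule of $A(k)[l^\infty]$ of rank $2\dim B\geq 2$ resp. $\geq 2r$. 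Since $V(k)_{p'-\tor}\subset\Lambda_n^\Zar(k)_{p'-\tor}$, this is exactly what the second paragraph reduced us to, and the hypothesis $p>2$ is needed only through Corollary \ref{T'''} and the quoted results of Raynaud. The two genuine obstacles, then, are this base-point bookkeeping and, in case (2), the preservation of the $\BQ_p/\BZ_p$-rank under $[p^n]$-division — both handled through the $A[p^n](k)$-invariance of $\Lambda_n$ and the freedom to enlarge $n$.
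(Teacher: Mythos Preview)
Your overall strategy --- transport via Proposition~\ref{techprop} and diagram~\eqref{commallpts1} to the problem of exhibiting enough prime-to-$p$ torsion on $\Lambda_n^{\Zar}$, then invoke Theorem~\ref{BT} with $S=\{l,p\}$ --- is exactly the paper's. Your first two paragraphs are correct; in particular one checks $\Lambda_n=[p]^{-n}(\Lambda_0)$ inside $A(k)$ (using that $\red\colon A(W)[p^n]\to A(k)[p^n]$ is a bijection), so the $A[p^n](k)$-invariance you invoke is genuine, and the $n$ from Proposition~\ref{techprop} may indeed be enlarged.

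The break occurs exactly where you predict. Your device for forcing $q(V)$ to be prime-to-$p$ does not work. Fix a component $V_0\subset\Lambda_0^{\Zar}$, let $B$ be the abelian subvariety generated by $V_0-V_0$, and suppose $q(V_0)_p$ has exact order $p^{m_0}>1$ in $A_k/B$. For each $n$ the components of $\Lambda_n^{\Zar}$ lying over $V_0$ form a single $A[p^n](k)$-orbit, and their $q$-images are precisely the elements of the coset $[p]^{-n}(q(V_0))$; but the $p$-part of \emph{every} element of that coset has order exactly $p^{m_0+n}>p^n$, so no $A[p^n](k)$-translate and no enlargement of $n$ can kill it. The paper never tries to make the base point prime-to-$p$: it takes an arbitrary torsion $a$ with a component of $\Lambda_n^{\Zar}+a$ generating $B$, uses Theorem~\ref{BT} to place $M\oplus\{a_p\}$ in the $(\pr_l\oplus\pr_p)$-image of $\Lambda_n^{\Zar}(k)+a$, and then shifts back by $a_p$ alone, i.e.\ works with $\Lambda_n^{\Zar}(k)+a_{p'}$; since $a_{p'}$ is prime-to-$p$ one has $(\Lambda_n^{\Zar}(k)+a_{p'})_{p'-\tor}=\Lambda_n^{\Zar}(k)_{p'-\tor}+a_{p'}$, and the translate of $M$ falls out. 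There is also a smaller slip in your treatment of (2): the hypothesis puts a rank-$r$ coset inside $\pr_p(\Lambda_0)$, not inside $\Lambda_0$, so what it bounds from below is the $p$-rank (hence the dimension) of the \emph{enveloping} abelian subvariety $A_i$ of some component $V_i$ of $\Lambda_0^{\Zar}$, not $\dim V_i$ or $\dim\Lambda_0^{\Zar}$; accordingly you should select $V$ so that its associated $B$ has $\dim B\ge r$ (as the paper does), rather than $V$ of maximal dimension.
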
 
 
  \begin{proof}
 
  Fix a large $n$   such that  \begin{equation} \pi_0\left(\rho^{-1}\left({\pi_n'}^{-1}\left( \Lambda _n^\Zar (k)_{p'-\tor}  \right)\right )\right)\bigcap   \CA ( K,K^\circ)_{p'-\tor} \subset \CZ ( K,K^\circ)_{p'-\tor} 
\label{chang}
\end{equation} as in Proposition \ref{techprop}.   
   Let $ X  $ be  the image of the left hand side of \eqref{chang} via the
composition of \begin{equation*}    \CA ( K,K^\circ)_{p'-\tor} 
 \cong A(W)_{p'-\tor}  \xrightarrow{\pr_l}   A(W)[l^\infty].
 \end{equation*}     Then $X $ is contained in the image of the composition of \eqref{intro3}.

 To prove (1), we only need to prove the following claim:   $X$ contains a translate of a free $\BQ_l/\BZ_l$-submodule of $A(W)[l^\infty]
$ of rank at least  $2$ for every $l$.  

  By   diagram \eqref{commallpts1}, 
we have $\Lambda _0= \red ( \Lambda) .$
Since $p>2$,   by Corollary \ref{T'''},   $\Lambda_0$ is infinite. Since $\Lambda_0=[p]^n(\Lambda_n)$, 
$\Lambda_n$ is infinite. There exists $a\in A(k)$ such that   an irreducible  component of  $  \Lambda _n^\Zar   +a$ 
(is contained and)
generates a nontrivial abelian subvariety  $A'$ of $A_k$. Since $Z$   contains the unit $0\in A_L$, $  \Lambda _n^\Zar $ contains  the unit $0\in A_k$ and  $A'$ contains $a$.
Let $a_p$ be the $p$-primary part of $a$ and $a_{p'}=a-a_p$.
By  Theorem \ref{BT} (for $ \Lambda _n^\Zar   +a\subset A'$ and   $S=\{p,l\}$), 
 the image of \begin{equation}  \Lambda _n^\Zar (k) +a\xrightarrow{\pr_l\bigoplus \pr_p}A (k)[l^\infty]\bigoplus A (k)[p^\infty]\label{zhejuhua}\end{equation}
  contains $M\bigoplus \{a_p\}$, where 
  $M$ is 
    a free $\BQ_l/\BZ_l$-submodule of $A(k)[l^\infty]$ of rank at least $2$.  
    Thus     $$\left(\pr_l\bigoplus \pr_p\right)\left ( \Lambda _n^\Zar (k) +a_{p'}\right)\supset  M\bigoplus\{ 0\}.$$
    We claim:   $$ \pr_l\left (\left (  \Lambda _n^\Zar (k) +a _{p'}\right)_{p'-\tor}\right)\supset  M .$$ 
Indeed, write $b\in \Lambda _n^\Zar (k) +a_{p'}$ as the sum $b_p+b_{p'}$ of $p$-primary part  and prime-to-$p$ part. Then 
$\left(\pr_l\bigoplus \pr_p\right) b=\pr_l(b_{p'})+b_p$. If this is $x+0\in M\bigoplus\{ 0\}$, then $b_p=0$, and $b=b_{p'}$.
Thus $ \pr_l(b)=x\in M$. The claim is proved.
 By the claim,  $$ M-\pr_l (a_{p'})\subset Y  :=  \pr_l\left( \Lambda _n^\Zar (k)  _{p'-\tor}\right)  . $$  
   By   Lemma \ref{comdiag}, 
$X  $
 contains  the preimage of $[p]^n(Y )$ under   the isomorphism $\red: A(W)_{p'-\tor}\cong A( k)_{p'-\tor}.$
    Thus we proved the claim above. 

   To prove (2), we only need to prove the following claim:  $X $ contains a translate of a free $\BQ_l/\BZ_l$-submodule of $A(W)[l^\infty]
$ of rank at least  $2r$ for every $l$. 

By   diagram \eqref{commallpts1}, 
we have $\Lambda _0= \red ( \Lambda) .$  Since $p>2$,   by Corollary \ref{T'''}  
and the assumption on $\Lambda$,
 $\pr_p(\Lambda_0)$ contains a translate of a free $ L/\BZ_p$-submodule of rank $r$. 
 Let $V_1,...,V_m$ be the irreducible components of 
  $\Lambda_0^\Zar$. 
Let  $A_i$ be the  minimal abelian subvariety of $A_k$ such that a certain translate of $A_i$ contains $V_i$. 
Since 
 the $p$-rank of  $A_i$ is at most  its dimension,      at least one $A_i$ is 
  of dimension at least $r$.    
Since $\Lambda_0=[p]^n(\Lambda_n)$, 
 there exists $a\in A(k)$ such that   an irreducible  component of  $  \Lambda _n^\Zar   +a$ generates an abelian subvariety of $A_k$  of dimension at least $r$.   
Then we prove (2) by copying the proof of (1) above, starting from  the sentence containing \eqref{zhejuhua}.
The only modification needed is that the rank of $M$ should be at least $2r$.
      \end{proof} 
    
         \subsection{The proof of Theorem \ref{MM}}\label{proofcurve}

   Now we prove Theorem \ref{MM}. By the argument in \cite{PZ}, we only need to prove the following weaker theorem.
   We save the  symbol $A $ for the proof.
  \begin{thm}\label{curveMM1} Let $F$ be number field. Let 
   $B$ be an abelian variety over  $F$  and $V $ a closed subvariety    of  $B$.   
 If  
  $V$  does not contain any translate of an  abelian subvariety of $B$ of positive dimension, then $V$ contains only finitely many torsion points of $B$.
  \end{thm}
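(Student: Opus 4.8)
The plan is to deduce Theorem~\ref{curveMM1} from Proposition~\ref{mainprop} by iterating it over primes. First I would pass to a local, geometric situation: since $B$ has good reduction away from a finite set of primes of $F$, choose a rational prime $p>2$ and a place $\fp\mid p$ of $F$, unramified over $p$, at which $B$ has good reduction; spreading $B$ out and base changing along $\CO_{F_\fp}\incl W:=W(\bar\BF_p)$ gives an abelian scheme $A/W$. Set $L:=W[\tfrac1p]$, the completion of the maximal unramified extension of $\BQ_p$, and $Z:=V_L\subset A_L$. Fixing an embedding $\bar F\incl\bar L$, it suffices to show that $Z(\bar L)_\tor$ is finite, and $Z$ still contains no translate of a positive-dimensional abelian subvariety. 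The point of this base is that, $W$ being Henselian with algebraically closed residue field, every finite \'etale $W$-scheme splits, so the prime-to-$p$ torsion of $A$ is entirely $L$-rational and $A(W)_{p'-\tor}$ is the full prime-to-$p$ torsion of $A$ (cf.\ Lemma~\ref{NOS}); also $A(W)=A(L)$ by properness.

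Next I would reduce ``$Z(\bar L)_\tor$ infinite'' to ``$\Lambda$ is infinite'' in the sense of \eqref{deflambda}. Suppose $Z(\bar L)_\tor$ is infinite. By Lemma~\ref{T''}(2) write $A(\bar L)_\tor=A(\bar L)_{p'-\tor}\oplus T\oplus T'$ as $\Gal(\bar L/L)$-modules; by Lemma~\ref{finalestimate} there is an integer $N$ such that the order of the $T'$-component of every $x\in Z(\bar L)_\tor$ divides $p^N$. Replace $Z$ by $Z^+:=[p^N]^{-1}\bigl([p^N]Z\bigr)=\bigcup_{t\in A[p^N]}(Z+t)$; this is again defined over $L$ and, since an irreducible translate of an abelian subvariety contained in a finite union of closed sets must lie in one of them, $Z^+$ still contains no translate of a positive-dimensional abelian subvariety. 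For $x=x_{p'}+x_T+x_{T'}\in Z(\bar L)_\tor$ the point $x-x_{T'}=x_{p'}+x_T$ lies in $Z^+(\bar L)$, is $L$-rational (hence in $Z^{+,\Zar}(W)$ by properness), and lies in $A(W)_{p'-\tor}\oplus T=\bigcap_n p^n(A(W)_\tor)$ by \eqref{shabi}; since the $T'$-component takes only finitely many values, $x\mapsto x-x_{T'}$ has finite fibres, so these points form an infinite set. Thus the set $\Lambda$ attached to $Z^+$ is infinite.

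The core of the argument is then an iteration. Starting from $\Lambda$ infinite, Proposition~\ref{mainprop}(1) produces, for every prime $l\ne p$, a translate of a free $\BQ_l/\BZ_l$-module of rank $\ge 2$ inside the image of $Z^{+,\Zar}(W)_{p'-\tor}\xrightarrow{\pr_l}A(W)[l^\infty]$. Running the same construction with the $l$-adic perfectoid universal cover of $A$ in place of the $p$-adic one, and with $p$ replaced by another prime of good reduction, Proposition~\ref{mainprop}(2) upgrades a rank-$r$ lower bound at one prime into a rank-$2r$ lower bound at another. Iterating along a sequence of distinct primes of good reduction, the rank doubles at each step, producing translates of free $\BQ_l/\BZ_l$-modules of rank $2,4,8,\dots$ inside $A(W)[l^\infty]\cong(\BQ_l/\BZ_l)^{2\dim A}$. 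As soon as $2^k>2\dim A$ this is absurd; hence $Z(\bar L)_\tor$, and therefore $V(\bar F)\cap B(\bar F)_\tor$, is finite.

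The genuinely hard input --- the perfectoid approximation of Proposition~\ref{techprop2}, Scanlon's Tate--Voloch theorem (Lemma~\ref{Scalem}) and Poonen's surjectivity (Theorem~\ref{BT}) --- is already packaged in Proposition~\ref{mainprop}, so the remaining work is the bookkeeping of the iteration, and that is where I expect the main difficulty to lie. Proposition~\ref{mainprop}(2) takes as \emph{input} a statement about $p$-power torsion and returns an \emph{output} about prime-to-$p$ torsion, so passing from one step to the next requires carefully choosing the Poonen set $S$ in each run, tracking which torsion subgroups are rational over which base field (so that, for instance, $Z^{+,\Zar}(W)_{p'-\tor}$ really does feed into the relevant ``$\Lambda$'' of the next run), and --- most delicately --- reducing modulo the correct prime so that the amplification is genuine, i.e.\ so that the inequality ``$l\text{-rank}\le\dim A$'' rather than the weaker ``$l\text{-torsion rank}=2\dim A$'' controls the abelian subvariety output by Poonen's theorem. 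It is also at this stage, beyond its appearance in Lemma~\ref{finalestimate}, that the hypothesis that $V$ contains no translate of a positive-dimensional abelian subvariety gets used, to rule out configurations in which the rank saturates without contradiction. Everything else is routine manipulation with the commutative diagrams of \S\ref{Tilting and reduction}.
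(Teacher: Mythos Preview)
Your proposal is correct and follows essentially the same route as the paper. The paper differs only cosmetically: rather than passing to $Z^+=[p^N]^{-1}([p^N]Z)$, it replaces $V$ by $[p^N]V$ outright (noting at the outset that it suffices to prove the theorem for any multiple of $V$), and it iterates by bouncing between two fixed places $v\mid p$ and $u\mid l$ rather than along a sequence of distinct primes. The transfer step --- reapplying Lemma~\ref{finalestimate} at the new prime and checking that the resulting translate lands in the divisible part $T$ --- is carried out exactly as you anticipate.

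Two small corrections to your commentary. First, the contradiction does not come from $2^k>2\dim A$ inside $A(W)[l^\infty]$: Proposition~\ref{mainprop}(2) never produces an output of rank exceeding $2\dim A$, so you cannot iterate that far. The contradiction arises one stage earlier, at the transfer: when you move the rank-$2^k$ translate to the new prime $l$ (via Lemma~\ref{finalestimate} and $V\rightsquigarrow[l^N]V$), you force it into $T_l=\bigcap_n l^n(B(M)[l^\infty])$, whose rank is the $l$-rank of the reduction and hence at most $\dim A$. So the absurdity is $2^k>\dim A$, and this is precisely the inequality ``$l\text{-rank}\le\dim A$'' you mention in your last paragraph --- it belongs to the transfer, not to the interior of Proposition~\ref{mainprop}. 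Second, the hypothesis that $V$ contains no translate of a positive-dimensional abelian subvariety is used \emph{only} through Lemma~\ref{finalestimate} (once at each prime); there is no separate ``saturation'' phenomenon to rule out, since the rank genuinely doubles at every successful step.
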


 \begin{proof}

    We only need to prove the theorem  up to replacing $V$ by  a multiple.
    
  Let $v$ be a place of  $F$ unramified over a prime number $p>2$ 
  such that  $B$ has good reduction.      Let $A$ be   the base change to $W $ of the  integral smooth model of $B$ over $\CO_{F_v}$. 
  Let $Z=V_L\subset A$.
  By \eqref{shabi} and Lemma  \ref{finalestimate},   up to replacing $V$ by   $[p^N]V $ for $N$ large enough, we may assume that $Z^\Zar(W)_\tor\subset  \bigcap\limits_{n=0}^{\infty}p^n(A(W)_{\tor}).$ Thus $\Lambda=Z^\Zar(W)_\tor$, where $\Lambda$ is defined as in \eqref{deflambda}. Suppose that $V$ contains infinitely many torsion points. Then $\Lambda$  is infinite. Up to replacing $V$ by   $[p^N]V $, we may assume that $Z$ contains the unit $0\in A_L$.
    Now we want to find a contradiction.
  By Proposition \ref{mainprop} (1), 
  for every prime number $l\neq p,$     the composition of \begin{equation*} Z(L)_{p'-\tor}\incl A(L)_{p'-\tor}\xrightarrow{\pr_l} A(L)[l^\infty]
 \end{equation*} 
contains a translate of a free $\BQ_l/\BZ_l$-submodule of $A(L)[l^\infty]
$ of rank  $2$.   

Let $u$ be another place of  $F$, unramified over an odd prime number  $  l\neq p$,
  such that  $B$ has good reduction at $u$.  Let $B_o$ be the reduction. Let $M$ be the completion of the maximal unramified extension   of $F_u$ and $\bar M$ its algebraic closure.  
  Then  the composition \begin{equation*} V(\bar M)_\tor \incl B(\bar M)_\tor \xrightarrow{\pr_l} B(\bar M)[l^\infty]
 \end{equation*} 
   contains a translate $G$ of a free $\BQ_l/\BZ_l$-submodule   of $B(\bar M)[l^\infty]
$ of rank  $2$.   
    Let   $T=\bigcap\limits_{n=0}^{\infty}l^n(B(M)[l^\infty]).$   
 By  Lemma  \ref{finalestimate} (applied  to $l$, $M$ instead of $p$,  $L$), up to replacing $V$ by   $[l^N]V $ for $N$ large enough,
  $G  $ is contained in $T$.  
By \eqref{shabi} (applied  to $l$, $M^\circ$ instead of $p$,  $W$),   the image of the composition of
  \begin{equation*} V( M)  \bigcap  \bigcap\limits_{n=0}^{\infty}l^n(B(M)_\tor) \incl B(M) \xrightarrow{\pr_l} B(M)[l^\infty]
 \end{equation*}  contains $G$.
 By Proposition \ref{mainprop} (2)  (applied  to $l$, $M^\circ$ instead of $p$,  $W$),  
  for every prime number $q\neq l$,  the composition \begin{equation*} V(  M)_{l'-\tor}\incl B(  M)_{l'-\tor}\xrightarrow{\pr_q} B(  M)[q^\infty]
 \end{equation*} 
   contains  a translate of a free $\BQ_q/\BZ_q$-submodule   of rank  $4$.    Repeating this process (use more places  or only work at $v$ and $u$), we get a contradiction as $A$ is of finite dimension.
    \end{proof}

   \section{Ordinary perfectoid Siegel space and Serre-Tate theory}
\label{The ordinary perfectoid Siegel space and Serre-Tate theory}
   Let $\BA_f$ be the ring of finite adeles of $\BQ$, $U^p\subset \GSp_{2g}(\BA_f^p)$ an open compact subgroup contained in the congruence subgroup of level-$N$ for some $N\geq 3$ prime to $p$.
 Let $X=X_{g,U^p} $  over $\BZ_p$ be 
 the 
 Siegel moduli space of principally polarized $g$-dimensional abelian
varieties over $\BZ_p$-schemes with level-$U ^p$ -structure. Let   $X_o$ be special fiber of $X$.  
We will use  the  perfectoid fields defined in \ref{perftheory}.    We briefly recall some notations.  Let  $k=\bar \BF_p$, $W=W(k)$  the ring of Witt vectors, $L$ the  fraction field of $W$, and       $L^\cycl$ the field extension of $L$ by adjoining all $p$-power-th roots of unity.   Let $K$ be
  the $p$-adic completion of $L^\cycl$ which is a perfectoid field. Then $K^\flat= k((t^{1/p^\infty})) $ is the tilt of $K$.
Fix a primitive $p^n$-th root of unity  $\mu_{p^n}$  for every positive integer $n$
    such  that $\mu_{p^{n+1}}^p=\mu_{p^n}$.
  
      \subsection{Ordinary perfectoid Siegel space}
\label{Ordinary perfectoid Siegel space}
      Let $X_o(0) \subset X_o$ be the ordinary locus.      Let $\fX(0)$ over $\BZ_p$ be   
   the open  formal subscheme of the formal completion of $X$  along  $X_o$ defined by the condition that every  local lifting of the Hasse invariant is invertible (see \cite[Definition 3.2.12, Lemma 3.2.13]{Sch13}).    
       Then    $\fX(0)/p= X_o(0)$ (see  \cite[Lemma 3.2.5]{Sch13}). 
Let $ \widehat{X_o(0)_{K^\fcc}}$ be  the $\varpi^\flat$-adic formal completion of $X_o(0)_{K^\fcc}$.
Let  $\CX(0) $ and $\CX'(0) $ be the adic genric fibers of $\fX(0)_{K^\circ}$ 
and $\widehat{X_o(0)_{K^\fcc}}$  respectively.  
 

Let  $\Fr:X_o(0)\to   X_o(0)$ be   the (relative)
   Frobenius morphism (note that $X_o(0)$ is defined over $\BF_p$). %
   Let                 $  \Fr^\can:\fX(0)\to\fX(0)$ be given by the functor sending an abelian scheme $A$ to its quotient by the connected subgroup scheme of $A[p]$.
              Then $  \Fr^\can/p=\Fr$. We also use $  \Fr^\can$ and $\Fr$ to denote their base changes to $K^\circ$ and $K^\fcc$ respectively.
  Let    $$ \tilde \fX(0):=\vpl_{\Fr^\can}\fX(0)_{K^\circ},\ \tilde \fX'(0):=\vpl_{\Fr} \widehat{X_o(0)_{K^\fcc}},$$ 
  where  the inverse  limits are taken   in the categories of   $\varpi$-adic  and $\varpi^\flat$-adic formal schemes respectively.  
  Here 
  $\varpi=\mu_p-1$    and $\varpi^\flat=t^{1/p}$.  
    By \cite[Corollary 3.2.19]{Sch13},  the  corresponding  adic genric fibers  $\CX(0)^\perf$ and $\CX'(0)^\perf$ of  $ \tilde \fX(0)$ and $ \tilde \fX'(0)$   are perfectoid spaces. Moreover, $\CX'(0)^\perf=\CX(0)^{\perf,\flat}$,  the tilt of $\CX(0)^\perf$.   
    Then we have   the natural projections
\begin{equation} 
 \pi:\CX(0)^\perf\to \CX(0),\ \pi':\CX(0)^{\perf,\flat}\to \CX'(0)\label{natpi}.\end{equation}    
  We also have  a natural map between the underlying sets defined in \ref{srho}\begin{equation}  \rho_{\CX(0)^\perf}:|\CX(0)^\perf|\to|\CX(0)^{\perf,\flat}|.\label{natrho}.\end{equation}    
  (The map  $\rho_{\CX(0)^\perf}$ is  in fact a homeomorphism and we do not need this fact.)

   \subsection{Classical Serre-Tate theory} \label{Classical Serre-Tate theory}
   
  We   use the adjective ``classical"   to indicate the Serre-Tate theory \cite{Kat} discussed in this subsection,  compared with Chai's global Serre-Tate theory to be discussed  in \ref{GST}. 
  
   Let $R$ be an Artinian local ring with maximal ideal $\fm$ and residue field $k$.
   Let $ A/\Spec R$ be an abelian scheme with ordinary special fiber $A_k$.  Let $A_k^\vee$ be the dual abelian variety of $A_k$.
  There is a   $\BZ_p$-module
 morphism from the product of Tate-modules $  T_p A_{k}\otimes T_p A_k^\vee$   to $1+\fm$   
constructed by Katz \cite{Kat}.  We call this morphism the  classical  Serre-Tate  coordinate system   for $A/\Spec R$.
 If
$ A/\Spec R$ is moreover a principally polarized   abelian scheme,
 the    Serre-Tate  coordinate system   for $A/\Spec R$ is a    $\BZ_p$-module
morphism 
\begin{equation}  q_{A/\Spec R} :  \Sym^2(T_p A_{k}) \to 1+\fm.\label{60}\end{equation} 
  
Let $x\in X_o(0)(k)$,  and let $A_x $ be the  corresponding  principally polarized   abelian variety.   
Let $\fM_x$  be the formal completion of $X$ at $x$,  and $\fA/\fM_x$ the formal universal  deformation of $A_x$. 
Then  as part of the construction of $q_{A/\Spec R}$, there is an isomorphism of formal schemes over $W$:
\begin{equation}\fM_x\cong \Hom_{\BZ_p}( \Sym^2(T_p A_{x}),\hat\BG_m)\label{stiso},\end{equation}
where $\hat\BG_m$ is the formal completion of the multiplicative group scheme over $W$  along the unit section.
In particular, 
$\fM_x$ has  a formal torus structure.  
Moreover, if  $A_k\cong A_x$ in \eqref{60},  then  \eqref{60}  is the value of \eqref{stiso} at the morphism $\Spec R\to \fM_x$ induced by $A$. Let  $\CO(\fM_x)$ be the coordinate ring of $\fM_x$, and let   $\fm_x$ be the maximal ideal of $\CO(\fM_x)$. 
From \eqref{stiso}, we have a morphism of $\BZ_p$-modules: $$ q=q_{ \fA/\fM_x}:\Sym^2(T_p A_{x}) \to 1+\fm_x.$$
Fix a basis $\xi_1,...,\xi_{g(g+1)/2}$ of $\Sym^2(T_p A_{x})$.
\begin{prop}[{\cite[3.2]{dJN}}]\label{dJN3.2}  Let $F$ be a finite extension of $L$ with ring of integers $F^\circ$.  
Let   $y^\circ\in X(F^\circ)$ with generic fiber $y$. 
Suppose that     $y^\circ\in \fM_x(F^\circ)$.
Then $y$ is a CM point if and only if   $q(\xi_i)(y^\circ)$ is a $p$-primary root of unity for $i=1,...,g(g+1)/2$.
\end{prop}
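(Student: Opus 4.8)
The plan is to translate the statement into classical Serre--Tate theory. The point $y^\circ$ classifies a principally polarized abelian scheme $A_{y^\circ}$ over $F^\circ$; the hypothesis $y^\circ\in\fM_x(F^\circ)$ means its special fiber is $A_x$, and by the isomorphism \eqref{stiso} the numbers $q(\xi_i)(y^\circ)\in 1+\fm_{F^\circ}$ are precisely the values of the Serre--Tate coordinate $q_{A_{y^\circ}/F^\circ}$ of \eqref{60} on the chosen basis. Two trivial reductions: a torsion element of $1+\fm_{F^\circ}$ is automatically a $p$-power root of unity (a prime-to-$p$ root of unity reduces injectively into $k^\times$, hence is $1$ if congruent to $1$), so the condition in the statement is equivalent to $q_{A_{y^\circ}/F^\circ}$ having finite image; and enlarging $F$ changes neither side. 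I will use three inputs: (i) the canonical lift $B:=A_x^{\can}$ of $A_x$ (over $W$, then over $F^\circ$) has $q_{B/F^\circ}\equiv 1$, has split $p$-divisible group, satisfies $\End(B)=\End(A_x)$, and --- since an ordinary abelian variety over $k=\bar\BF_p$ admits a CM structure by Honda--Tate --- is a CM abelian scheme; (ii) Serre--Tate functoriality for isogenies (Katz, \emph{Serre--Tate local moduli}): an isogeny $f\colon A\to A'$ of deformations of $A_x$ lifting $f_0\in\End(A_x)$ satisfies $q_{A'/R}(f_0x,y)=q_{A/R}(x,f_0^\vee y)$; (iii) on the ordinary locus the connected part $A_{y^\circ}[p^m]^\circ$ of the $p^m$-torsion is a finite flat subgroup scheme over $F^\circ$ (the canonical subgroup), and the class of $A_{y^\circ}[p^\infty]$ in $\Ext^1\big((\BQ_p/\BZ_p)^g,\mu_{p^\infty}^g\big)$ over $F^\circ$ is exactly $q_{A_{y^\circ}/F^\circ}$.

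For the ``only if'' direction, assume $y$ is a CM point, so $\End^0(A_{y,\bar F})$ contains an \'etale $\BQ$-algebra $E$ of degree $2g$. After enlarging $F$ these endomorphisms are defined over $F$, extend to $A_{y^\circ}$ over $F^\circ$ by the \Neron mapping property, and specialize injectively into $\End(A_x)$; hence $A_x$ has CM by an order $\CO$ in $E$, and $A_{y^\circ}$ and $B$ are $\CO$-linear deformations of $A_x$ with the same ($p$-adic) CM type. By the isogeny classification of CM abelian varieties over $\bar L$ there is an $\CO$-linear isogeny $A_{y,\bar F}\to B_{\bar F}$, which extends (after a finite extension of $F$) to an isogeny $\phi\colon A_{y^\circ}\to B$ of abelian schemes over $F^\circ$ reducing to some isogeny $\bar\phi\in\End(A_x)$. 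Applying (ii) with $f=\phi$, $f_0=\bar\phi$ and using $q_{B/F^\circ}\equiv1$ gives $q_{A_{y^\circ}/F^\circ}(x,\bar\phi^\vee y)=1$ for all $x,y$; since $\bar\phi^\vee$ is an isogeny, its image on the relevant Tate module contains $p^m$ times everything for some $m$, so $q_{A_{y^\circ}/F^\circ}$ takes values in $\mu_{p^m}$. Thus all $q(\xi_i)(y^\circ)$ are $p$-power roots of unity.

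For the ``if'' direction, assume all $q(\xi_i)(y^\circ)$ lie in $\mu_{p^m}$, i.e.\ the class $c$ of $A_{y^\circ}[p^\infty]$ over $F^\circ$ satisfies $p^m c=0$. Set $B'':=A_{y^\circ}/A_{y^\circ}[p^m]^\circ$, an abelian scheme over $F^\circ$ isogenous to $A_{y^\circ}$; a direct computation with connected--\'etale sequences shows that quotienting by $A_{y^\circ}[p^m]^\circ$ pushes $c$ forward along multiplication by $p^m$, so the class of $B''[p^\infty]$ is $p^m c=0$ and $B''$ has split $p$-divisible group. Its special fiber $A_x/A_x[p^m]^\circ$ is ordinary (isogenous to $A_x$), and an abelian scheme over $F^\circ$ with ordinary special fiber and split $p$-divisible group is the base change of the canonical lift of its special fiber (Serre--Tate rigidity plus algebraization of the formal deformation by a polarization); hence $B''$ is a CM abelian scheme by (i), and therefore so is $A_{y^\circ}$. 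Thus $y$ is a CM point.

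The main obstacle I anticipate is the careful use of Serre--Tate theory over the non-Artinian base $F^\circ$: identifying $q_{A_{y^\circ}/F^\circ}$ with the extension class of $A_{y^\circ}[p^\infty]$, passing between vanishing of that class and the splitness/canonicity of $B''[p^\infty]$, and algebraizing the relevant formal deformations --- all of which need the $\varpi$-adically complete version of the Serre--Tate theorem rather than its Artinian form. A secondary point needing care in the ``only if'' direction is the claim that any two CM lifts of the ordinary CM abelian variety $A_x$ (with CM by $\CO$) are $\CO$-linearly isogenous over $\bar L$, i.e.\ that they share the same $p$-adic CM type, which one extracts from the $\CO$-action on the Dieudonn\'e module of $A_x$ via Shimura--Taniyama.
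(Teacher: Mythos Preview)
The paper does not supply its own proof of this proposition: it is quoted from de Jong--Noot \cite[3.2]{dJN} (as the bracketed attribution indicates) and used as a black box, with no argument given in the text. There is therefore nothing in the paper to compare your proposal against.

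For what it is worth, your sketch follows the standard Serre--Tate route and is essentially the argument in \cite{dJN} and \cite{Kat}. The ``if'' direction is clean: a torsion point of the formal torus $\fM_x$ is carried to the origin by $[p^m]$, which on the moduli side is the $m$-fold canonical-subgroup quotient, so $A_{y^\circ}$ is isogenous to a canonical lift and hence CM. For ``only if'', your detour through an explicit isogeny to the canonical lift $B$ is correct but, as you yourself flag, buying the isogeny costs a CM-type comparison. The shorter path is to apply your input (ii) directly to the endomorphisms $\alpha\in\CO_E\subset\End(A_{y^\circ})$ themselves: the lifting criterion $q(\alpha_0\,\cdot\,,\,\cdot\,)=q(\,\cdot\,,\alpha_0^\vee\,\cdot\,)$, run over a full CM order, combined with the fact that for \emph{ordinary} $A_x$ the $\CO_E\otimes\BZ_p$-module $T_pA_x[p^\infty]^\et$ is supported on exactly half of $\Spec(\CO_E\otimes\BZ_p)$ (and the Rosati involution swaps the two halves), forces $q$ to be torsion-valued without ever invoking $B$.
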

Thus every ordinary CM point    is contained in $X(L^\cycl)$.   For  an ordinary CM point  $y\in X(L^\cycl)$.
there is a unique $y^\circ\in X (K^\circ)$ whose generic fiber is $y_K\in X(K)$.  We   regard $y^\circ$ as a point in $\fX(0)(K^\circ)$  and $y_K$ as a point in $\CX(K,K^\circ)$ via Lemma \ref{algpoints}.

 \begin{defn}\label{generator} 

 Let  $a =(a ^{(1)},...,a^{(g(g+1)/2)})\in\BZ_{\geq 0}^{g(g+1)/2}$.
 
 (1)   An ordinary CM point $y\in X(L^\cycl)$ with reduction $x$ 
   is called of order  $p^a$   w.r.t. the basis  $\xi_1,...,\xi_{g(g+1)/2}$ 
        if 
   $q(\xi_i)(y^\circ) $  is a primitive $p^{a^{(i)}}$-th root of unity  for each $i=1,...,g(g+1)/2$.
  If moreover   $q(\xi_i)(y^\circ)=\mu_{p^{a^{(i)}}}$, $y$  
   is  called a $\mu$-generator w.r.t. the basis  $\xi_1,...,\xi_{g(g+1)/2}$.
   
   (2)  Assume that $a$ is non-increasing  so that $q(\xi_{i+1})(y^\circ)$ is an $r^{(i)}$-th power of $q(\xi_{i})(y^\circ)$ for 
   some (non-unique)  $r^{(i)}\in \BZ_p$, $i=1,. . . ,g(g+1)/2-1$. We call $(r^{(1)},. . . ,r^{( {g(g+1)/2-1})})\in \BZ_p^{g(g+1)/2-1}$ a ratio of $y$ w.r.t.  the basis  $\xi_1,...,\xi_{g(g+1)/2}$.
   \end{defn}
   It is clear that if $a$ is  non-increasing,  then the usual $p$-adic absolute value  $|r^{(i)}|_p=p^{a ^{(i+1)}-a ^{(i)}}$.

Let $T_i =q(\xi_i)-1\in \fm_x.$  Then we have an isomorphism \begin{equation}\CO(\fM_x)\cong W[[T_1,...,T_{g(g+1)/2}]]\label{6.1}.\end{equation}
Let $ \widehat {X_o(0)}_{/x}$ be  the  formal completion  of $X_o(0)$ at $x$. 
Restricted to  $ \widehat {X_o(0)}_{/x}$, \eqref{6.1} gives
 an isomorphism \begin{equation}\CO(\widehat {X_o(0)}_{/x})\cong k[[T_1,...,T_{g(g+1)/2}]].\label{6.3}\end{equation}
Let   $U\to X_o(0)$ be an \etale morphism, $z\in U(k)$ with image $x$.
 Then 
\eqref{6.3} gives an isomorphism \begin{equation}\CO(\widehat {U}_{/z})\cong k[[T_1,...,T_{g(g+1)/2}]]. \label{63}\end{equation}
Let $A_z$ be the pullback of $A_x$ at $z$. Then we naturally have $T_p A_z\cong T_p A_x$. Thus we also regard $\xi_1,...,\xi_{g(g+1)/2}$ as a basis of  $\Sym^2(T_p A_{z})$.

\begin{defn}\label{realization'}  
We call \eqref{63} the  realization of the classical Serre-Tate  coordinate system of 
 $ \widehat {U}_{/z}$ at the basis $\xi_1,...,\xi_{g(g+1)/2}$ of  $\Sym^2(T_p A_{z})$.
\end{defn}     

We have another description of \eqref{63}. Let $I_n$ be a descending  sequence of open ideals of $\CO(\widehat {U}_{/z})$ defining the topology of $\CO(\widehat {U}_{/z})$. Let $R_n:=\CO(\widehat {U}_{/z})/I_n$, 
let $A_n $  be the pullback of the formal universal principally polarized abelian scheme over $\fM_x$ to $\Spec R_n$
with special fiber $A_z$.  Let $$q_{A _n/\Spec R_n}: \Sym^2(T_p A^\univ_x)\to   R_n^\times $$ be the  classical Serre-Tate coordinate system  of $A _n/R_n$. 
    Then $ q_{A_n/\Spec R_n}(\xi_i)-1=T_i(\mod I_n)$.
   Thus the sequence   $\{ q_{A_n/\Spec R_n}(\xi_i)-1\}_n $   gives an element in $ \CO(\widehat {U}_{/z})\cong \vpl_n  R_n,$
  which equals $T_i$. 

 \subsection{Tilts of   ordinary  CM points}\label{Tilting canonical liftings}
 Let $ \widehat {X_o(0)_{K^{\fcc}}}_{/x}$ be  the  formal completion  of ${X_o(0)_{K^{\fcc}}}$ at $x$.
By \eqref{6.3}, we  have
\begin{equation}\CO\left (\widehat {X_o(0)_{K^{\fcc}}}_{/x}\right)\cong K^{\fcc}[[T_1,...,T_{g(g+1)/2}]].\label{cDx}\end{equation}
 Let $\cD_x$ be the adic generic fiber of $\widehat {X_o(0)_{K^{\fcc}}}_{/x}$. 
Then $\cD_x$ is  is an adic subspace of $\CX'(0)$ in the sense of Definition \ref{adicdef}.
  Moreover,  \eqref{cDx}  and Lemma \ref {algpoints} imply an isomorphism \begin{equation}\cD_x(K^\flat, K^\fcc)\cong K^{\flat\dbc,g(g+1)/2}. \label{cdreal}\end{equation}

\begin{lem}\label{6.2.3} 
Let $y\in X(L ^\cycl)$ be an ordinary CM point with reduction $x$. 

(1) For every $\tilde y\in \pi^{-1}(y_K)\subset \CX(0)^\perf$,  we have $$\pi'\circ \rho_{\CX(0)^\perf}(\tilde y)\in \cD_x .$$

(2)  Let  $a =(a ^{(1)},...,a^{(g(g+1)/2)})\in\BZ_{\geq 0}^{g(g+1)/2}$ and  $I\subset \{1,2,...,g(g+1)/2\}$   the subset of $i$'s such that   $a^{(i)}=0$.  Let $y$ be  a $\mu$-generator    of order $p^a$   w.r.t.  the basis $\xi_1,...,\xi_{g(g+1)/2}$ (see Definition \ref{generator}). There exists $\tilde y\in \pi^{-1}(y_K)$ such that  via the isomorphism   \eqref{cdreal},
 the $i$-th coordinate of  $\pi'\circ \rho_{\CX(0)^\perf}(\tilde y) $ is 0 for $i\in I$ and is $ t^{1/p^{a^{(i)}}}$ for $i\not\in I$.
\end{lem}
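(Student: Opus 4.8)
The plan is to localize at the residue tube of $x$, where the Serre--Tate torus structure reduces everything to the tilting of the formal multiplicative group computed in Lemma \ref{2316}, and to use the compatibility of $\rho$ with reduction (which underlies Lemma \ref{tiltred2}). Write $d=g(g+1)/2$. Let $\CT_x\subset\CX(0)$ be the residue tube of $x$, i.e.\ the adic generic fibre of the base change to $K^\circ$ of $\fM_x$; by \eqref{stiso} it is, in the coordinates $T_i=q(\xi_i)-1$, the open polydisc $\{\,|T_i|<1\,\}$, and the $q(\xi_i)=1+T_i$ are the multiplicative coordinates of the formal torus $\fM_x\cong\hat\BG_m^{\,d}$. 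The input from classical Serre--Tate theory \cite{Kat} that I need is that the canonical Frobenius lift $\Fr^\can$, which gives the transition maps of $\tilde\fX(0)=\vpl_{\Fr^\can}\fX(0)_{K^\circ}$, acts on these $q$-coordinates by the $p$-th power map (it reduces to relative Frobenius, hence equals $[p]$ on $\hat\BG_m^{\,d}$ by rigidity). Consequently, after transporting the fixed basis $\xi_1,\dots,\xi_d$ along the canonical isogenies of the tower, the restriction of $\CX(0)^\perf$ over $\CT_x$ is the $[p]$-adic universal cover $\vpl_{[p]}\{\,|T_i|<1\,\}$ of $d$ copies of $\hat\BG_m$, and likewise the restriction of $\CX'(0)^\perf$ over the tube $\cD_x\subset\CX'(0)$ is $\vpl_{[p]}$ of the corresponding polydisc over $K^\fcc$.

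Under these identifications $\rho_{\CX(0)^\perf}$ is the tilting of the formal-torus cover, and by Lemma \ref{2316} --- more precisely by the isomorphism \eqref{2.3.2eq} underlying it --- it carries a point whose $i$-th coordinate is the compatible system $(\mu_{p^{a_i}},\mu_{p^{a_i+1}},\dots)$ to the point whose $i$-th coordinate is $(1+t^{1/p^{a_i}},1+t^{1/p^{a_i+1}},\dots)$, and it carries the trivial system $(1,1,\dots)$ to itself. Part (1) now needs only the reduction compatibility: for any $\tilde y\in\pi^{-1}(y_K)$, the point $\pi(\tilde y)=y_K$ specializes to $x$ because $y$ has reduction $x$, i.e.\ $y^\circ\in\fM_x(F^\circ)$ specializes to $x$; since $\rho_{\CX(0)^\perf}$ is compatible with reduction modulo $\varpi$ (for arbitrary points, cf.\ Lemma \ref{tiltred2} and \cite[Theorem 5.2]{Sch12}), the point $\pi'\circ\rho_{\CX(0)^\perf}(\tilde y)$ also specializes to $x$, hence lies in $\cD_x$.

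For (2) I write down $\tilde y$. A point of $\pi^{-1}(y_K)$ is the same as a choice, for each $i$, of a compatible system $S_i^{(0)},S_i^{(1)},\dots$ with $(S_i^{(n+1)})^p=S_i^{(n)}$, $|S_i^{(n)}-1|<1$ and $S_i^{(0)}=q(\xi_i)(y^\circ)$; by Proposition \ref{dJN3.2} and Definition \ref{generator} the latter equals $\mu_{p^{a_i}}$, which is $\mu_{p^0}=1$ exactly when $i\in I$. Take $S_i^{(n)}=1$ for all $n$ when $i\in I$, and $S_i^{(n)}=\mu_{p^{a_i+n}}$ when $i\notin I$; these data define a point $\tilde y\in\pi^{-1}(y_K)$ (using Lemma \ref{algpoints}). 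By the tilting formula of the previous paragraph, the $q(\xi_i)$-coordinate of $\pi'\circ\rho_{\CX(0)^\perf}(\tilde y)$ equals $1$ for $i\in I$ and $1+t^{1/p^{a_i}}$ for $i\notin I$. Since under \eqref{cdreal} the $i$-th coordinate of a point of $\cD_x$ is $T_i=q(\xi_i)-1$ in the characteristic-$p$ Serre--Tate coordinates (cf.\ the realization \eqref{63}), the $i$-th coordinate of $\pi'\circ\rho_{\CX(0)^\perf}(\tilde y)$ is $0$ for $i\in I$ and $t^{1/p^{a_i}}$ for $i\notin I$, as asserted.

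The main obstacle is the first paragraph: one has to check carefully that $\Fr^\can$ induces the $p$-th power map on the Serre--Tate coordinates, and that this identifies $\CX(0)^\perf$ (respectively $\CX'(0)^\perf$) over the residue tube of $x$ with the (tilted) $[p]$-adic universal cover of $\hat\BG_m^{\,d}$ compatibly with $\rho_{\CX(0)^\perf}$; this requires keeping track of the Frobenius orbit $x=x_0,x_1,x_2,\dots$ of residue points along the tower and of the induced bases of $\Sym^2 T_p A_{x_n}[p^\infty]$, so that Lemma \ref{2316} applies verbatim. Once this local identification is in place, both parts are bookkeeping with the reduction and tilting diagrams of Section \ref{The ordinary perfectoid Siegel space and Serre-Tate theory}.
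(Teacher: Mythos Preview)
Your proposal is correct and follows essentially the same route as the paper. Both localize at the residue tube of $x$, identify the action of $\Fr^\can$ on Serre--Tate coordinates with the $p$-th power map (the paper cites \cite[Lemma~4.1.2]{Kat} for the precise formula $\Fr^{\can,*}(q(\sigma(\xi_i)))=q(\xi_i)^p$, which is exactly the Frobenius-twist bookkeeping you flag as the main obstacle), build the local perfectoid space $\CM_x^\perf=\vpl_{\Fr^\can}\fM_{A_x^{(\sigma^{-n})}}$ as an open adic subspace of $\CX(0)^\perf$ with tilt $\CM_x'^\perf$, and embed into $\CG^{g(g+1)/2,\perf}$ so that Lemma~\ref{2316} (cases $c=0$ and $c=1$) gives part~(2). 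Your argument for part~(1) via compatibility of $\rho$ with reduction mod $\varpi$ is a mild rephrasing of the paper's observation that $\rho_{\CX(0)^\perf}|_{\CM_x^\perf}=\rho_{\CM_x^\perf}$ and that $\pi'|_{\CM_x'^\perf}$ is the natural projection to $\cD_x$; both amount to the fact that every lift $\tilde y$ already lies in $\CM_x^\perf$, since $\Fr$ is bijective on $k$-points so the residue point at each level of the tower is forced.
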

\begin{proof}

We recall the effect of  $\Fr^\can$ on $\fM_x$   (see \cite[4.1]{Kat}).
 Denote  $\fM_{x}$  by $\fM_{A_x}$.
   Let $\sigma\in \Aut(k)$ be the Frobenius.  Let $A_x^{(\sigma)}:= A_x \otimes_{k,\sigma} k$ be the base change by   $\sigma$.
Then  $\Fr^\can $,  restricted   to  $\fM_{A_x}$ gives a morphism $\Fr^\can: \fM_{A_x}\to \fM_{A_x^{(\sigma)}}$ over $W$ \cite[p 171]{Kat}.
  Let    $\sigma(\xi_1),...,\sigma(\xi_{g(g+1)/2})$ be the induced basis
   of $\Sym^2(T_p A_{x}^{(\sigma)}[p^\infty])$. 
 Then
 \cite[Lemma 4.1.2]{Kat} implies that \begin{equation} \Fr^{\can,*}(q (\sigma(\xi_i)))=q (\xi_i)^p.\label{Qp}\end{equation}

 We associate a perfectoid space to $\fM_x$.
 Let 
\begin{equation*}\tilde\fM_x:=\vpl\limits_{\Fr^\can} \fM_{ A_x^{(\sigma^{-n})}}  .\end{equation*} 
 By  a similar (and easier) proof as the one for \cite[Corollary 3.2.19]{Sch13},  the adic generic fiber $\CM_x^\perf$ of $\tilde\fM_{x,K^\circ}$
is a perfectoid space. 
Moreover, let $\CM_x'^\perf$ be the adic generic fiber  of $\vpl\limits_{\Fr } \widehat {X_o(0)_{K^{\fcc}}}_{/x}$.
  Then   $\CM_x'^\perf$ is the tilt of $ \CM_x^\perf$.  
    By Lemma    \ref{tiltmor}, the tilting process commutes with restriction to an open subspace.  Thus to prove  Lemma \ref{6.2.3}, we we only need to consider the tilting between  $ \CM_x^\perf$ and $\CM_x'^\perf$.  
 Then Lemma \ref{6.2.3} follows from the cases $c=0$ and $c=1$  of Lemma \ref{2316} (which deals with closed units discs while here we are dealing with open unit discs so that we apply    \ref{tiltmor} again).
\end{proof}

\subsection{Global  Serre-Tate  theory}\label{GST}
\subsubsection{The algebraic and geometric formulations}
 Now we review Chai's globalization of Serre-Tate  coordinate system in characteristic $p$ \cite{Cha}. 
Let $U$ be a  $\BF_p$-scheme.
Let $A/U$ be  an   abelian scheme whose relative dimensions on   connected components of $U$ are the same.  
Define  
$$\nu_U=\vpl _{n}\Coker([p^n]:\BG_m\to \BG_m),$$
which is  a $\BZ_p$-sheaf    on $U_\et$.

\begin{eg}\label{GSTeg} 
(1)     Let $m\geq n$ be positive integers, and   $U_0=\Spec k[T]/T^{p^n}$. Then the $p^m$-th power of  an element in $(k[T]/T^{p^n})^\times$ with constant term $b$ is $b^{p^m} $. 
Thus  
    \begin {equation}\nu_{U_0}(U_0)=(k[T]/T^{p^n})^\times/k^\times\cong 1+T(k[T]/T^{p^n}).\label{phix}\end{equation}

 (2) Let $B$ be  an    $\BF_p$-algebra, $U=\Spec B $ and  
 $U'=\Spec B[T]/T^{p^n}$.  
For $m\geq n$, consider the map $$B^\times/(B^\times)^{p^m}\bigoplus
(1+TB[T]/T^{p^n})\to (B[T]/T^{p^n})^\times/((B[T]/T^{p^n})^\times)^{p^m}$$
defined  by
 $(a,f)\mapsto af.$
Easy to check that this  is a group isomorphism.
In particular,  \begin {equation}\nu_{U'}(U')\cong\nu_{U}(U)\bigoplus
(1+TB[T]/T^{p^n}) .\label{phixB}\end{equation}
 
  (3) For every       $z\in U(k) $,   $\{z\}\times _{U}U' \cong U_0$. 
 Then the restriction of the isomorphism \eqref{phixB}   at $z$ is the isomorphism     \eqref{phix}.
 \end{eg}

Suppose  $A/U$ is ordinary. 
Let  $T_p A[p^\infty]^\et$ be  the Tate module attached to the maximal \etale quotient of the $p$-divisible group  $A[p^\infty]$. 
The     global  Serre-Tate  coordinate system for $A/U$ is a homomorphism of $\BZ_p$-sheaves 
$$ q_{A/U}:T_p A[p^\infty]^\et\otimes   T_p A^\vee[p^\infty]^\et\to \nu_U$$  
   constructed by Chai \cite[2.5]{Cha}. 
   Let   $U_0=\Spec k[T]/T^{p^n}$.  
Let $A/U_0$ be an ordinary abelian scheme, and $A_k$  the special fiber of $A$. 
Then 
\begin{equation} T_p A[p^\infty]^\et\otimes T_p A^\vee[p^\infty]^\et \cong T_p A_k[p^\infty]\otimes T_p A_k^\vee[p^\infty],\label{612}\end{equation}
 where the right hand side  is regarded as  a constant sheaf.

\begin{lem}[{\cite[(2.5.1)]{Cha}}]\label{GSTeg'}       The  morphism of $\BZ_p$-modules $$T_p A_k[p^\infty]\otimes T_p A_k^\vee[p^\infty]\to \nu_U(U)\cong  1+T(k[T]/T^{p^n})$$  induced from $q_{A/U_0} $ via \eqref{612}   coincides with  the classical Serre-Tate coordinate system (see \eqref{60}).

 \end{lem}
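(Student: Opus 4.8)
The plan is to unwind both Serre--Tate coordinate systems and observe that each is the connecting homomorphism of the connected-\'etale exact sequence of the ordinary $p$-divisible group $A[p^\infty]$, so that the comparison reduces to matching the two target groups and the two boundary maps. First I would recall the classical construction of \ref{Classical Serre-Tate theory}: over $R=k[T]/T^{p^n}$, which is Artinian local with nilpotent maximal ideal $\fm$, the group $A[p^\infty]$ sits in an extension $0\to A[p^\infty]^0\to A[p^\infty]\to A[p^\infty]^\et\to 0$ whose multiplicative part $A[p^\infty]^0$ and \'etale quotient are the unique lifts of those of $A_k$; through the Cartier-duality identification $A[p^\infty]^0\cong\Hom(T_pA_k^\vee[p^\infty],\mu_{p^\infty})$ the class of this extension becomes a $\BZ_p$-bilinear map $T_pA_k[p^\infty]\otimes T_pA_k^\vee[p^\infty]\to\mu_{p^\infty}(R)=1+\fm$, and this is the classical Serre--Tate coordinate system (the bilinear form underlying \eqref{60}).

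Next I would recall Chai's globalization over a general $\BF_p$-scheme $U$: one forms the same connected-\'etale sequence, but since $\mu_{p^n}$ is no longer \'etale the target $\mu_{p^\infty}(R)=1+\fm$ must be replaced by the sheaf $\nu_U=\varprojlim_n\Coker([p^n]\colon\BG_m\to\BG_m)$ attached to the sequences $1\to\mu_{p^n}\to\BG_m\xrightarrow{[p^n]}\BG_m\to 1$, and $q_{A/U}$ sends a section of $T_pA[p^\infty]^\et\otimes T_pA^\vee[p^\infty]^\et$ to the image of the associated extension class under the boundary map, in the limit over $n$. To prove the lemma I would then specialize to $U=U_0$ and: (i) invoke Example \ref{GSTeg} (1), which furnishes the canonical isomorphism $\nu_{U_0}(U_0)\cong 1+T(k[T]/T^{p^n})=1+\fm$, so the two targets agree; (ii) check that under this isomorphism the boundary map defining $q_{A/U_0}$ coincides with the classical Serre--Tate connecting map, using that the extension Chai works with is exactly the connected-\'etale sequence and that for Artinian $R$ the relevant cohomology of $\mu_{p^n}$ is $R^\times/(R^\times)^{p^n}$; (iii) identify $T_pA[p^\infty]^\et\otimes T_pA^\vee[p^\infty]^\et$ with $T_pA_k[p^\infty]\otimes T_pA_k^\vee[p^\infty]$ via \eqref{612}, the two \'etale $p$-divisible groups being the unique deformations of those over $k$.

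The main obstacle is step (ii): one must verify that Chai's sheaf-theoretic boundary map and the classical deformation-theoretic pairing are literally the same map and not merely maps with the same source and target. This is precisely the assertion \cite[(2.5.1)]{Cha}, so one option is to cite it directly; for a self-contained argument I would reduce to the universal deformation by passing to the truncations $R_n=\CO(\widehat{X_o(0)}_{/x})/I_n$ with $\CO(\widehat{X_o(0)}_{/x})\cong k[[T_1,\dots,T_{g(g+1)/2}]]$ as in \ref{Classical Serre-Tate theory}, then use Example \ref{GSTeg} (3) --- the restriction of the global identification $\nu_{U_0}(U_0)\cong 1+\fm$ at a $k$-point recovers the pointwise one --- together with the classical fact recorded after \eqref{stiso} that the value of $q_{\fA/\fM_x}$ at the point of $\fM_x$ classifying $A/\Spec R$ is the classical coordinate of $A/\Spec R$. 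The remaining identifications are a routine diagram chase.
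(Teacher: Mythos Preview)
The paper does not supply a proof of this lemma: it is stated with the attribution \cite[(2.5.1)]{Cha} and used as a black box. So there is no ``paper's own proof'' to compare against; the paper simply cites Chai.

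Your sketch is a reasonable outline of why the cited result holds. You correctly identify that both pairings arise as the extension class of the connected--\'etale sequence of $A[p^\infty]$, and that the content lies in matching the target groups ($\mu_{p^\infty}(R)=1+\fm$ versus $\nu_{U_0}(U_0)$) and the boundary maps. Your step (i) via Example \ref{GSTeg} (1) and step (iii) via \eqref{612} are clean. For step (ii), your ``self-contained'' alternative is somewhat circular as written: reducing to the universal deformation and invoking the description after \eqref{stiso} tells you what the \emph{classical} coordinate does at each truncation, but you still need to know that Chai's sheaf-theoretic boundary map, when evaluated on sections over an Artinian local base, literally returns the same extension class in $R^\times/(R^\times)^{p^m}\cong 1+\fm$ rather than some twist of it. That identification is exactly what \cite[(2.5.1)]{Cha} records, and it is proved there by unwinding Chai's construction of $q_{A/U}$ step by step over an Artinian base; absent that unwinding, the cleanest route is simply to cite Chai, as the paper does.
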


 The geometric formulation of global  Serre-Tate  coordinate system is as follows.  
 Let $A^{\univ}$ be the universal principally polarized abelian scheme over $X_o(0)$,  and $\hat A^{\univ}$   the formal completion of $A^{\univ}$ along the zero section which is a formal torus over $X_o(0)$.
  Then the  sheaf   of polarization-preserving  $\BZ_p$-homomorphisms
 between $T_p A^{\univ}[p^\infty]^\et$ and  $\hat A^{\univ}$  is a formal torus over $ X_o(0)$ of dimension $\frac{g(g+1)}{2}$. Let us call it $\fT_1$. Let $\Delta$ be the diagonal embedding of $X_o(0)$
into $X_o(0)\times X_o(0)$,  and let 
$\fT_2$ be the formal completion of $X_o(0)\times X_o(0)$  along this embedding.    
     \begin{prop}[{\cite[Proposition 5.4]{Cha}}] \label{geogl}
  There is a canonical isomorphism   $\fT_1\cong \fT_2.$
In particular,  $\fT_2$  has a formal torus structure over the first $X_o(0) $. 

\end{prop}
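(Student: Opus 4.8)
The plan is to build an explicit $X_o(0)$-morphism $\Phi\colon\fT_2\to\fT_1$ out of the global Serre-Tate coordinate system and then to check that it is an isomorphism by base changing to each closed point of $X_o(0)$, where it reduces to the classical Serre-Tate isomorphism. First I would record the geometry on both sides. Since $X\to\Spec\BZ_p$ is smooth (the level is $\geq 3$ and prime to $p$), $X_o(0)$ is smooth over $\BF_p$ of dimension $g(g+1)/2$; hence $\pr_1\colon X_o(0)\times X_o(0)\to X_o(0)$ is smooth of relative dimension $g(g+1)/2$ with the diagonal $\Delta$ as a section, so $\fT_2$ is formally smooth over $X_o(0)$ of relative dimension $g(g+1)/2$ and, Zariski-locally on $X_o(0)$, isomorphic to the formal affine space of that dimension over the base, with its section $\Delta$. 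On the other side $\fT_1$ is a formal torus over $X_o(0)$ of the same relative dimension, hence locally for the \'etale topology on $X_o(0)$ a product of $g(g+1)/2$ copies of $\hat\BG_m$ over the base, with its zero section. So both are formally smooth $X_o(0)$-schemes of the same relative dimension, each carrying a distinguished section.

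Next I would construct $\Phi$. Over $\fT_2$ we have the two abelian schemes $\pr_1^*A^{\univ}$ and $\pr_2^*A^{\univ}$; they restrict to one and the same abelian scheme $A^{\univ}$ over the reduced subscheme $\Delta\cong X_o(0)$, so $\pr_2^*A^{\univ}$ is a deformation of $A^{\univ}/X_o(0)$ across the formal thickening $\Delta\incl\fT_2$, whose ideal of definition is topologically nilpotent. Using Chai's global Serre-Tate coordinate system \cite[2.5]{Cha} --- together with the fact that $T_p(\pr_i^*A^{\univ})[p^\infty]^{\et}$, being an \'etale sheaf on $\fT_2$, is canonically the pullback of $T_pA^{\univ}[p^\infty]^{\et}$ from $(\fT_2)_{\red}=X_o(0)$ --- I would form the ratio $q_{\pr_2^*A^{\univ}/\fT_2}\cdot q_{\pr_1^*A^{\univ}/\fT_2}^{-1}$, a homomorphism $\Sym^2 T_pA^{\univ}[p^\infty]^{\et}\to\nu_{\fT_2}$ that becomes trivial after restriction to $\Delta$ (there the two abelian schemes agree) and hence defines a $\fT_2$-point of $\fT_1$, that is, a morphism $\Phi\colon\fT_2\to\fT_1$ over $X_o(0)$ carrying $\Delta$ to the zero section of $\fT_1$. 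In moduli terms, $\Phi$ sends a test point $(A_1,A_2)$ of $\fT_2$ --- a pair of ordinary principally polarized abelian schemes agreeing modulo a nilpotent ideal --- to the difference of their Serre-Tate coordinates relative to a common reduction, which is independent of the auxiliary canonical lift.

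Finally I would show $\Phi$ is an isomorphism. Since being an isomorphism is local for the \'etale topology on $X_o(0)$, I may assume $\fT_1$ and $\fT_2$ are both the formal affine space $\Spf\CO(U)[[T_1,\dots,T_{g(g+1)/2}]]$ over an affine $U$ \'etale over $X_o(0)$ with Noetherian Jacobson coordinate ring $\CO(U)$, with $\Phi$ a continuous $\CO(U)$-algebra homomorphism carrying the augmentation ideal into the augmentation ideal; by the formal inverse function theorem, $\Phi$ is an isomorphism iff the Jacobian matrix of $\Phi$ along the zero sections has unit determinant in $\CO(U)$, which --- $\CO(U)$ being Jacobson --- can be tested modulo every maximal ideal, i.e.\ at each closed point $x$ of $X_o(0)$ (whose residue field is finite, hence perfect), after base change along $\wh{X_o(0)}_{/x}\to X_o(0)$. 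Over $\wh{X_o(0)}_{/x}$ the base change of $\fT_2$ is the formal completion of $\wh{X_o(0)}_{/x}\times\wh{X_o(0)}_{/x}$ along its diagonal and that of $\fT_1$ is the Serre-Tate torus of $A_x$ over $\wh{X_o(0)}_{/x}$, and by Lemma \ref{GSTeg'} (that is, \cite[(2.5.1)]{Cha}) the restriction of $\Phi$ there is given by the classical Serre-Tate coordinate system, which is an isomorphism --- this is exactly the classical Serre-Tate theorem already used in \eqref{stiso}. Hence $\Phi$ is an isomorphism over every $\wh{X_o(0)}_{/x}$, and so an isomorphism; transporting the formal torus structure of $\fT_1$ through $\Phi$ then gives the asserted structure on $\fT_2$ over the first factor. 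The step I expect to be the main obstacle is the globalization: one must know that $q_{A/U}$ is algebraic and functorial enough to define an honest morphism of formal schemes (not merely a compatible family of maps on points) and that it specializes to the classical Serre-Tate coordinates on each infinitesimal neighbourhood --- the real content of \cite[2.5,(2.5.1)]{Cha} --- after which the pointwise classical theorem and the inverse function theorem finish the proof.
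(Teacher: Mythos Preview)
The paper does not give a proof of this proposition: it is quoted verbatim from Chai \cite[Proposition~5.4]{Cha} and used as a black box, so there is no ``paper's own proof'' to compare your proposal against. Your sketch is a plausible reconstruction of how such a statement is proved---build a morphism from $\fT_2$ to $\fT_1$ using the global Serre-Tate coordinates and verify it is an isomorphism by reducing to the classical Serre-Tate isomorphism at each closed point---and indeed the paper's Lemma~\ref{GSTeg'} and Lemma~\ref{6.3.3} record exactly the fibrewise compatibility you invoke. One point worth tightening: your map $\Phi$ should land in $\fT_1=\CHom^{\mathrm{pol}}_{\BZ_p}(T_pA^{\univ}[p^\infty]^{\et},\hat A^{\univ})$, a sheaf of homomorphisms into a formal torus, whereas the difference of Serre-Tate coordinates you form is a section of $\CHom(\Sym^2 T_pA^{\univ}[p^\infty]^{\et},\nu_{\fT_2})$; the identification of these two targets (via the character lattice of $\hat A^{\univ}$ and the relation between $\nu$ and $\hat\BG_m$ on nilpotent thickenings) is part of Chai's construction and should be cited rather than left implicit. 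Apart from that, your outline matches the shape of the argument in \cite{Cha}, but since the present paper simply cites the result, the correct move in this context is to do the same.
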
 
\subsubsection{Igusa tower}
In order to have sections of  the \etale $\BZ_p$-sheaf   $T_p A[p^\infty]^\et$ over $U$,
or equivalently to trivialize the formal torus, we need to pass to  the Igusa tower, defined as follow.  
For $n=0,1,...,\infty$, let $\fI_n$  be the functor   assigning to every $k$-algebra $R$  the set of isomorphism classes of pairs 
$$\{(A,\vep):A\in X_o(0)(R),\ \vep:A[p^n]\cong \hat\BG_{m,R}^g[p^n]\}.$$
By \cite[8.1.1]{Hid},  for $n<\infty$ (resp. $n=\infty$)
 the functor $\fI_n$ is represented by a   $k$-scheme (which we still denote by $\fI_n$)  finite (resp. profinite)   Galois over 
$X_o(0)$ with Galois group $\GL_g(\BZ/p^n\BZ)$ (resp.  $\GL_g(\BZ_p)$). 
And $\fI_n$ is  known as the Igusa scheme of level $n$.

  \subsubsection{Realization of  the global  Serre-Tate  coordinate system at a basis}\label{GSTapp} 
Let $U_0 $ be  an affine open subscheme of  $X_o(0)$.  Let   $U=\Spec B:=\fI_\infty|_{U_0}$ . Let $\Delta$ be the diagonal of $U\times U$. 
We have two projection maps $\pr_1,\pr_2$ from $\widehat{U\times U}_{/\Delta }$ to the first and second $ U$.
For $z\in U(k)$,  the restriction of $\pr_2$ induces  \begin{equation}\pr_1^{-1}(\{z\})\overset{\pr_2}\cong    \widehat{  U}_{/z} \label{633}.
\end{equation}
 
 Let $\CO\left(\widehat{U\times U}_{/\Delta }\right)$ be  the coordinate ring of $\widehat{U\times U}_{/\Delta }$.
Endow   $\CO\left(\widehat{U\times U}_{/\Delta }\right)$ a $B$-algebra structure via $\pr_1$.
  By Proposition \ref{geogl}, 
   we have a  (non-unique) $B$-algebra  isomorphism \begin{equation}\CO\left(\widehat{U\times U}_{/\Delta }\right)\cong  B[[T_1,...,T_{ g(g+1)/2}]].\label{BTT}\end{equation}

   Let
 $ A/\widehat{U\times U}_{/\Delta }$ be the pullback of   $A^{\univ}|_{U_0}$.  
Assume that 
$\Sym^2(T_p A^{\univ}[p^\infty]^\et) (U)$ is a free $\BZ_p$-modules of rank $ g(g+1)/2$. Let  $ \xi_1 ,..., \xi_{g(g+1)/2} $ be a basis of $\Sym^2(T_p A^{\univ}[p^\infty]^\et) (U)$ (whose existence follows from the definition of $\fI_\infty$ and the polarization). 
 The realization  of  the global  Serre-Tate  coordinate system of   $ A/\widehat{U\times U}_{/\Delta }$ at the basis $ \xi_1 ,..., \xi_{g(g+1)/2} $  is a construction of   an isomorphism \eqref{BTT} as follows.

For the  simplicity of notations, let us assume $g=1$.  The general case can be dealt in the same way. 
Let $\xi=\xi_1$ and $T=T_1$.
    Let  $$U'=\widehat{U\times U}_{/\Delta } /T^{p^n} \cong \Spec B[[T]]/T^{p^n} $$ and  let $A_n $ be the restriction of $A$ to $U'$. The  global  Serre-Tate  coordinate system  of  $A_n/U'  $ is a homomorphism of $\BZ_p$-sheaves over $U_{2,\et}$
 $$q_{A_n/U'}: \Sym^2(T_p A_n [p^\infty]^\et) \to \nu_{U'} .$$  
Note that
   $\xi$ gives a basis  
   $\xi_n$ of 
   $\Sym^2(T_p A_n [p^\infty]^\et)  (U')$.
Then we have 
$$q_{A_n/U'}(\xi_n)\in \nu(U')\cong\nu_{U}(U)\bigoplus
(1+TB[T]/T^{p^n}) $$
where the second isomorphism is  \eqref{phixB}.
Consider the morphism $$\phi_n: \nu(U')\to(1+TB[T]/T^{p^n})\incl B[T]/T^{p^n}$$
where the first map is the projection and second map is  the natural inclusion.   
Let $T_n^\ST\in  B[T]/T^{p^n}$ be  $ \phi_n(q_{A_n/U'}(\xi_n))-1 $.  
As $n$ varies, $T_n^\ST $'s  give an element $$T^\ST\in \CO\left(\widehat{U\times U}_{/\Delta }\right)\cong \vpl_n  B [T]/T^{p^n}.$$

 We compare the above construction with the realization of the classical Serre-Tate coordinate system.
Let $z\in U(k)$.
The restriction of $A$ to $\pr_1^{-1}(\{z\})$ is pullback  $A^\univ|_{\widehat{  U}_{/z }}$ of    $A^{\univ}|_{U_0}$ to  $\widehat{  U}_{/z }$ via \eqref{633}.
 (Thus we may regard $A$ as the  family $\{A^\univ|_{\widehat{  U}_{/z }}:z\in U(k)\}.$)
    The realization of the classical Serre-Tate coordinate system of  $\widehat{  U}_{/z }$
  at $\xi_z$ (the restriction of $\xi$ at $z$) gives an element $T_z^{c\ST}\in \widehat{  U}_{/z }$ and an  isomorphism 
  $ \widehat{  U}_{/z }\cong  \Spf k[[T_z^{c\ST}]]$
 (see Definition \ref{realization'}). Here  and below, the supscript $c$ indicates ``classical".

\begin{lem}\label{6.3.3}
The restriction of $T^\ST$ to $\pr_1^{-1}(\{z\}) \cong\widehat{  U}_{/z} $ is $T_z^{c\ST}$. In particular, $$\CO\left(\widehat{U\times U}_{/\Delta }\right)= B[[T^\ST]].$$
\end{lem}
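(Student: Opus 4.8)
The plan is to restrict everything to the fibre of $\pi_1$ over $z$ and to reduce the identity to the classical case via Lemma \ref{GSTeg'}; as in the text I take $g=1$, the general case being componentwise identical. Fix $n$ and set $U_0:=\{z\}\times_{U_1}U_2$. By Example \ref{GSTeg}(3), $U_0\cong\Spec k[T]/T^{p^n}$ and the restriction to $U_0$ of the decomposition \eqref{phixB}, which writes $\nu_{U_2}(U_2)=\nu_{U_1}(U_1)\oplus(1+TB[T]/T^{p^n})$, is the isomorphism \eqref{phix} writing $\nu_{U_0}(U_0)=1+T(k[T]/T^{p^n})$. In particular this restriction kills the summand $\nu_{U_1}(U_1)$, so restricting the section $q_{A_n/U_2}(\xi_n)\in\nu_{U_2}(U_2)$ to $U_0$ gives the same element of $1+T(k[T]/T^{p^n})$ whether or not one first applies the projection $\phi_n$; hence the image of $T_n^\ST$ in $k[T]/T^{p^n}$ equals $q_{A_n/U_2}(\xi_n)|_{U_0}-1$.

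Let $A_{n,z}$ be the restriction of $A_n$ to $U_0$: it is an ordinary abelian scheme over $U_0$ with special fibre $A^\univ_z\cong A^\univ_x$, and $\xi_n$ restricts to $\xi_z$, regarded as a basis of $\Sym^2(T_p A^\univ_x[p^\infty])$ through this isomorphism. Chai's construction in \cite[2.5]{Cha} of the global Serre--Tate coordinate system is natural under base change: pulling $A_n$ back along $U_0\hookrightarrow U_2$ pulls $q_{A_n/U_2}$ back to $q_{A_{n,z}/U_0}$ under the canonical identifications of Tate modules and of the sheaves $\nu$. Therefore $q_{A_n/U_2}(\xi_n)|_{U_0}=q_{A_{n,z}/U_0}(\xi_z)$, so that $T_n^\ST|_z=q_{A_{n,z}/U_0}(\xi_z)-1$; and by Lemma \ref{GSTeg'} the element $q_{A_{n,z}/U_0}(\xi_z)\in 1+T(k[T]/T^{p^n})$ is, via \eqref{612}, the value at $\xi_z$ of the classical Serre--Tate coordinate system of $A_{n,z}/\Spec(k[T]/T^{p^n})$.

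Finally, $A_{n,z}$ is the pullback of $A^\univ|_{\widehat{U_1}_{/z}}$ along the truncation $\widehat{U_1}_{/z}\to\Spec R_n$ with $R_n=\CO(\widehat{U_1}_{/z})/(T|_z)^{p^n}$, and $\{(T|_z)^{p^n}\}_n$ is a cofinal system of open ideals defining the topology of the one-dimensional formal disc $\widehat{U_1}_{/z}$. By the computation of the realization of the classical Serre--Tate coordinate system given just after Definition \ref{realization'}, the value at $\xi_z$ of the classical coordinate of $A_{n,z}/\Spec R_n$ minus $1$ is congruent to $T_z^{c\ST}$ modulo $(T|_z)^{p^n}$. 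Combining the three paragraphs, $T_n^\ST|_z\equiv T_z^{c\ST}\pmod{(T|_z)^{p^n}}$ for every $n$, and passing to the inverse limit over $n$ yields $T^\ST|_z=T_z^{c\ST}$. For the last assertion, identify $\CO(\widehat{U_1\times U_1}_{/\Delta})$ with $B[[T]]$ via \eqref{BTT}; since each $T_n^\ST$ lies in $TB[T]/T^{p^n}$ we may write $T^\ST=T\cdot h$ with $h\in B[[T]]$, and what was just proved shows that for every $z\in U_1(k)$ the image of $h$ in $\CO(\widehat{U_1}_{/z})=k[[T|_z]]$ has nonzero constant term, because $T_z^{c\ST}$ is a coordinate on the formal disc $\widehat{U_1}_{/z}$. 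As $B$ is finitely generated over $\BF_p$, a function vanishing at no $k$-point lies in no maximal ideal and hence is a unit; so $h\in B[[T]]^\times$, the map $T\mapsto T^\ST$ is a $B$-algebra automorphism of $B[[T]]$, and $\CO(\widehat{U_1\times U_1}_{/\Delta})\cong B[[T^\ST]]$.

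The step I expect to require the most care is the base-change naturality used in the second paragraph: one must check from the explicit construction in \cite[2.5]{Cha} that the global Serre--Tate homomorphism of $\BZ_p$-sheaves commutes with pullback, and that --- combined with Example \ref{GSTeg}(3) --- the projection $\phi_n$ of \eqref{phixB} is compatible with restriction to the $k$-point $z$. Once these naturalities are in place, Lemma \ref{GSTeg'} is what actually transports the identity to the classical Serre--Tate coordinate, and the remainder is bookkeeping with inverse limits.
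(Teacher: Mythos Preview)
Your proof is correct and follows essentially the same route as the paper's: both arguments restrict to the fibre over $z$, invoke Example \ref{GSTeg}(3) to identify the decomposition \eqref{phixB} with \eqref{phix}, and then apply Lemma \ref{GSTeg'} to match the global coordinate with the classical one level by level. The paper's proof is terser---it simply asserts that ``$q^c_n(\xi_z)$ equals the restriction of $\phi_n(q_{A_n/U_2}(\xi_n))$ at $z$'' by Example \ref{GSTeg}(3) and Lemma \ref{GSTeg'}, and for the second statement says only that it ``follows from the first one''---whereas you spell out the base-change naturality of Chai's construction and give an explicit Nullstellensatz argument (writing $T^{\ST}=T\cdot h$ and checking $h(0)\in B^\times$) for the second claim, which is a welcome clarification of what the paper leaves implicit.
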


\begin{proof} 

The restriction of  \eqref{BTT}  to $\pr_1^{-1}(\{z\}) \cong\widehat{  U}_{/z} $ via \eqref{633}  gives an isomorphism $\CO(\widehat{  U}_{/z} )\cong  k[[T]]. $ 
 Let $$q^c_n=q_{A^\univ|_{\widehat{  U}_{/z } }/T^{p^n}}^c: \Sym^2(T_p A^\univ_z[p^\infty])\to 1+T k[[T]] /T^{p^n}$$ be the    classical Serre-Tate coordinate system of $A^\univ|_{\widehat{  U}_{/z }} /T^{p^n}$ (see \eqref{60}).  Then   the image of $T_z^{c\ST}$ in $  k[[T]] /T^{p^n}$ is $q^c_n (\xi_z)-1.$
By Example \ref{GSTeg} (3) and  Lemma   \ref{GSTeg'}, $q^c_n(\xi_z)$ equals the restriction of  $\phi_n(q_{A_n/U'}(\xi_n))$   at $z$.   Thus the  first statement  follows. The second  statement  follows from the first one.
\end{proof}

          \section{Proof of Theorem \ref{TVcor1} }\label{pf{TVcor}}
In this section,  we at first prove a Tate-Voloch type result in   a family  in characteristic $p$.
 Combined with    the results in Section \ref{The ordinary perfectoid Siegel space and Serre-Tate theory}, we prove Theorem \ref{TVcor1}.
 We continue to use the notations  in Section \ref{The ordinary perfectoid Siegel space and Serre-Tate theory}.

    \subsection{Tate-Voloch type result  in   a family  in  characteristic $p$}\label{pf{TVcor1}}
    Recall that   $k=\bar \BF_p$   and  $K^\flat= k((t^{1/p^\infty})) $.   
    In the proof of  Lemma \ref{g=0}, we used the following simple fact: let   
    $S$   a $k$-algebra, $g\in S$ and  $x\in   (\Spec S)(k)$, then   $g (x)= 0$ or $|g (x)|_k=1$ where the   valuation $|\cdot|_k$ on $k$   takes value  0 on $0\in k$ and 1 on $k^\times$.   This fact can be naively regarded as an analog of the  Tate-Voloch conjecture over $k$. We want to consider this analog in a family. We need some notations.

       Let $l$ be a positive integer. 
 For $d=(d^{(1)},...,d^{(l)}) \in (p^{\BZ_{< 0}})^l$,  define $t^d:=(t^{d^{(1)}},...,t^{d^{(l)}})\in  (K^\fcc)^l$. 
 For $c=(c^{(1)},. . . ,c^{(l)}) \in ( {\BZ_{p}}^\times)^l$,  define $$(1+t^d)^c-1:=\left((1+t^{d^{(1)}})^{c^{(1)}}-1,. . .,(1+t^{d^{(l)}})^{c^{(l)}}-1\right)\in  (K^\fcc)^l.$$ 
 Fix     a sequence   $\{d_n\}_{n=1}^\infty$ of elements in  $(p^{\BZ_{< 0}})^l$ and   a sequence $\{c_n\}_{n=1}^\infty$  of elements in  $(\BZ_p^\times)^l$.
 Let 
 \begin{equation} \label{equation}y_n=  (1+t^{d_n})^{c_n}-1  \in (K^\fcc)^l\subset \Spec K^\fcc[[ T_1,...,T_l]]. 
  \end{equation}
Let $\BN=\{1,2,. . . \}$ the sequence of positive integers. For  $\delta\in (0,1)$ and   the  given sequence    $\{d_n\}_{n=1}^\infty$, let   $$\BN(\delta)=\{n\in \BN:
 d_n^{(i)}/d_n^{(i+1)}<\delta\}.$$    If $l=1$, we understand $\BN(\delta)$ as $\BN$.

\begin{prop} \label{7.2.1} Let $  A$ be a reduced $k$-algebra and  $V=\Spec A$.    
   Let $ \{z_n\}_{n=1}^\infty $ be a sequence of  (not necessarily distinct) points in $V(k)$. 
   Let $f\in A[[ T_1,...,T_l]]$ and 
let $f_{z_n}\in k[[ T_1,...,T_l]]$  be the  restriction      of $f$ at $z_n$.
Assume that  \begin{itemize} 
  \item[$(\star)$]  for every 
  infinite subset  $\BN'\subset \BN$, 
   the set
   $\{z_n: n\in \BN'\} $ is Zariski dense in $V$. 
\end{itemize}
If $f\neq 0$,
then   there exists    $  D_0\in \BR_{>0} $  and $\delta_0\in (0,1)$  such that for      every $D\geq D_0$ and $\delta\leq \delta_0$,   
   the following set is finite
  \begin{equation}  \{ n\in \BN(\delta) : \|f_{z_{n}}(y_{n})\| < \|T_l (y_n)\|^D\} \label{fDn}.\end{equation}  
 Here $T_l (y_n)$ is, by definition,  the $l$-th coordinate of $y_n$.

\end{prop}

\begin{proof}

We do induction on $l$. 

The case $l=1$  is proved as follows.
  Let $f=\sum\limits_{m\geq 0} a_mT^m$ where $ a_m \in A$. Regard $a_m$ as a function on $V$ so that $a_m(z_n)\in k$. 
Claim: there exists   some $m$ such that
$a_{m}({z_{n}})\neq 0$ for $n$ large enough. Let $m_0
$ be the smallest such $m$.
 Then $$\|f_{z_{n}}(y_{n})\|=\|t^{d_{n}m_0}\| $$   for $n$ large enough.
 Let $D_0=m_0$ and we are done. 
Now we prove the claim by contradiction. Assume that  for every  $m$,    $a_{m}(z_n)= 0$ for  infinitely many $n$.  By assumption $(\star)$ and  the reducedness of $A$, $a_m=0$.
    Thus $f=0$. This is a contradiction.

Now we do the induction. Let $l>1$. 
We prepare some notations.
Let $d'_n,y'_n$  be the first $l-1$ components of $d_n,y_n$  respectively. For $\delta\in (0,1)$, we have a subsequence  $\BN(\delta)'\subset \BN$ defined using  the sequence $\{d'_n\} _{i=1}^\infty$.
 Then  $\BN(\delta)'\supset \BN(\delta)$.

Assume that $f\neq 0$. Write $f=T_l^{m_1}(g_1+f_1)$  where $g_1\in A[[T_1,. . .,T_{l-1}]]\bsl \{0\}$ and $f_1\in  T_l A[[T_1,. . .,T_{l}]]$. 
  Below, to lighten notation, we abbreviate the subscript $z_n$. 
  Then for $n$ in the set  \eqref{fDn}, with $D$ and $\delta$ to be determined, we have    \begin{equation*}\|g_1(y_n)+f_1(y_n)\|= \|T_l (y_n)\| ^{-m_1}\|f(y_n)\|< \|T_l (y_n)\| ^{D-m_1} . \end{equation*}
 If $D\geq m_1+1$, then 
    $$\|g_1(y_n) \|\leq \|g_1(y_n)+f_1(y_n) \|+\|f_1(y_n)\|\leq   \|T_l (y_n)\|.$$
Since $\|T_{l} (y_n)\|< \|T_{l-1} (y'_n)\|^{1/\delta}$ and $\|g_1(y'_n)\|=\|g_1(y_n)\|$, we have 
 \begin{equation}
 \|g_1(y'_n)\|< \|T_{l-1} (y'_n)\|^{1/\delta}.
\label{woll}\end{equation}
By the  induction hypothesis,  there exists $D'>0$ and $\delta'_0\in(0,1)$ such that if  $ \delta\leq 1/D'$ and $\delta\leq \delta'_0$,  $\{n\in \BN(\delta)':\eqref{woll}\text{ holds}\}$ is finite.   Then   \eqref{fDn} is finite by choosing $\delta_0=\min\{1/D', \delta'_0\}$. \end{proof}
 
   \begin{rmk}(1)    $  D_0$ and  $\delta_0 $ are uniform for all choices of  $\{c_n\}_{n=1}^\infty$. We do not need this fact later. 
   
   (2)  The proposition  is inspired by   \cite[Lemma 2.10]{Ser}. In the proof of \cite[Lemma 2.10]{Ser}, there is a minor imprecision.  
   The following modification is suggested by Serban.
Define $T_\delta$ in  \cite[Lemma 2.10]{Ser} to be the first set in the intersection
but not the entire intersection, so that the statement (2) in loc. cit.  is about $T_\delta\cap S_{\phi}(q^{-1-c})$.
The 3rd displayed formula in the proof of \cite[Lemma 2.10]{Ser} should be removed.
     Then, on can still get the 5th displayed formula  in that proof with slightly more effort.  
     \end{rmk} 
     
  \subsubsection{Closure and limit} 
We show that  assumption $(\star)$ in Proposition \ref{7.2.1} holds in some situations.

                 \begin{lem}\label{1lem} 
                    Let $\{B_i\}_{i=0}^\infty$ be a  system of rings and $B=\vil_i B_i$. Let $  f_i:\Spec B\to \Spec B_i$ be the natural  morphism.
               Let $ \Lambda\subset \Spec B $ be a   subset and  $ \Lambda_i=  f_i(\Lambda)\subset \Spec B_i $.  We have the following relation between Zariski closures: \begin{equation}\Lambda ^\Zar =\bigcap_{i=0}^\infty   f_i^{-1}\left( \Lambda_i^\Zar\right) .\label{bydef}\end{equation}

\end{lem}
\begin{proof}
 The ideal $I\subset B$   defining $\Lambda ^\Zar$, with reduced induced structure as a closed subscheme, is generated by the union of the images $ I_i$ in $B$, where $I_i\subset B_i$ is the ideal of elements whose image in $B$ vanishes on $\Lambda ^\Zar$.  By the definition of $\Lambda_i$, $I_i$ is the ideal defining $\Lambda_i^\Zar$. Then \eqref{bydef} follows.
        \end{proof}

Let $f:U\to U_0$ be a  surjective morphism of  schemes. Let $\Lambda_0\subset U_0 $ be a subset  with Zariski closure $\Lambda_0^\Zar$  in $U_0$. 
 For $s\in \Lambda_0$, choose $z_s\in f^{-1}(s)$. Let $\Lambda=\{z_s:s\in \Lambda_0\}$ with Zariski closure $  \Lambda^\Zar$ in $U$.
 
 \begin{lem}   
 Assume that $f$ is closed.
 
 (1) 
 The image of $\Lambda^\Zar$ in $U_0$ is $\Lambda_0^\Zar$.
 
(2)    Assume that $\Lambda_0^\Zar$  is irreducible and $U$ is noetherian. 
There exists a choice of $\Lambda$ such that  $\Lambda^\Zar$  is irreducible. 

(3) In (2), further assume that $f$ is finite and the Zariski closure of every infinite subset of $\Lambda_0$ is $\Lambda_0^\Zar$. Then  the Zariski closure of every infinite subset of $\Lambda$ is $\Lambda^\Zar$. 
  \end{lem}
  \begin{proof} (1) is easy and the proof is omitted. 
  
  (2)    For every member of the finitely many irreducible  (so closed) components of $f^{-1}(\Lambda_0^\Zar)$, its image in 
  $\Lambda_0^\Zar$  is a closed  subscheme. By the irreducibility of $\Lambda_0^\Zar$,   some  irreducible component of $f^{-1}(\Lambda_0^\Zar)$   is surjective to $\Lambda_0^\Zar$.
We choose  all $z_s$'s in this component. 

(3) Note that a finite surjective morphism preserves dimension, and a proper closed subscheme of a noetherian irreducible scheme has a strictly smaller dimension. Then (3) follows from (1) and counting dimensions.   
 \end{proof}

The last two lemmas imply the following corollary.
  \begin{cor} \label{profirred}    Let
  $B,B_i$'s  be as in Lemma \ref{1lem}.  Let $ U=\Spec B$ (not necessary noetherian), $U_0=\Spec B_0$ and $f=  f_0$.  
  Assume that each $B_i$ is noetherian  and the transition morphisms $\Spec B_j\to \Spec B_i$ are finite surjective.
 Assume that   the Zariski closure of every infinite subset of $\Lambda_0$ is $\Lambda_0^\Zar$.
 There exists a choice of $\Lambda$ such that    the Zariski closure of every infinite subset of $\Lambda$ is $\Lambda^\Zar$.

  \end{cor}
  To fulfill the second assumption of the corollary, we use the following lemma.
  \begin{lem} \label{profirredlem}  Let $U_0$ be a noetherian scheme. For every infinite  subset   $Y\subset U_0 $, there is an   infinite  subset  $\Lambda_0\subset Y $ such that  the Zariski closure of every infinite subset of $\Lambda_0$ is $\Lambda_0^\Zar$. 

\end{lem} 
 \begin{proof}  By the noetherianness of   $U_0$, there exists a closed subscheme
 $V$ of  $U_0$  containing an   infinite  subset $\Lambda_0$  of $ Y $ such that every  proper
 closed subscheme of  $V$  only contains finitely many elements in  $Y$.  \end{proof} 
 
   \subsection{Proof of  Theorem \ref{TVcor1}} 
  
 Let $X$ be a product of Siegel moduli spaces over $\BZ_p$ with  certain  level structures away from $p$.
 By Lemma \ref{T'T},    Theorem \ref{TVcor1}  follows from 
the following theorem.

  \begin{thm} \label{TVcor1'}   Let $Z$ be a closed subvariety of 
of  $X_{\bar L} $. There exists a constant $c>0$ such that for every     ordinary CM point   $x \in X(L^\cyc)$, if $d(x,Z)\leq c$, then $x\in Z$.\end{thm}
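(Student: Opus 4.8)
The plan is to deduce the theorem from the characteristic-$p$ Tate--Voloch statement of Corollary \ref{7.2.1}, by lifting ordinary CM points to the ordinary perfectoid Siegel space $\CX(0)^\perf$, tilting, and recognising such points as ``$p$-power roots of unity'' in families of Serre--Tate deformation spaces. First I would invoke Lemma \ref{T'T} to reduce to the case that $X$ has maximal level at $p$, so that $\CX(0)^\perf$ and the Serre--Tate material of \ref{Classical Serre-Tate theory}, \ref{Tilting canonical liftings} and \ref{GST} are available. Arguing by contradiction, and using a finite affine cover of $X$ by $\BZ_p$-flat opens together with the cover-independence of $d$ (Lemma \ref{pullback1}), I may suppose there is a sequence of ordinary CM points $x_n$ over a single affine open $U=\Spec B\subset X$ with $d(x_n,Z)\to 0$ but $x_n\notin Z$; fixing generators $f_1,\dots,f_m$ of the ideal of the schematic closure of $Z\cap U$ and passing to a subsequence, let $f=f_j$ be one with $0<\|f(x_n^\circ)\|\le d(x_n,Z)\to 0$. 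For $n$ large $d(x_n,Z)<1$, so the reductions $\bar x_n$ lie in $X_o(0)(k)$; by Proposition \ref{dJN3.2} each $x_n$ is a Serre--Tate deformation of the canonical lift of $\bar x_n$ whose Serre--Tate coordinates for a basis $\xi_1,\dots,\xi_l$ ($l=g(g+1)/2$) are $p$-power roots of unity, and after a basis change making $x_n$ a $\mu$-generator (Definition \ref{generator}) I am in the setting of Lemma \ref{6.2.3}.

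Next I would lift $x_{n,K}$ to $\tilde x_n\in\CX(0)^\perf(K,K^\circ)$ and tilt. By Lemma \ref{6.2.3}, for a suitable choice of $\tilde x_n$ the point $\pi'\circ\rho_{\CX(0)^\perf}(\tilde x_n)$ lies in the Serre--Tate deformation space $\cD_{\bar x_n}$, and under \eqref{cdreal} its coordinates are $0$ for $i$ in some set $J$ and monomials $t^{a_{n,i}}$ with $a_{n,i}\in p^{\BZ_{<0}}$ otherwise. Passing to a subsequence I may take $J$ fixed, and for $i\notin J$ either $a_{n,i}\to 0$ or $a_{n,i}$ eventually constant; in the latter case I substitute the constant monomial, reducing the number of Serre--Tate variables as in the inductive step of the proof of Proposition \ref{7.2.1'} (equivalently, I apply a power of $\Fr^\can$ to lower the bounded orders). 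So I may assume $a_{n,i}\to 0$ for every $i\notin J$.

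The heart of the argument is to push the $f_j$ to characteristic $p$ uniformly over the varying deformation spaces. The restriction of $\CX(0)^\perf$ over $U_K$ is a perfectoid affinoid $\Spa(R,R^+)$ whose integral tilt $R^{\flat+}$ is the $\varpi^\flat$-adic completion of $S\otimes K^{\flat\circ}$ for a $k$-algebra $S=\bigcup_n S_n$ with $S_n$ finitely generated (Lemma \ref{A.16}, Assumption \ref{asmp4}). Scholze's approximation lemma (Lemma \ref{Corollary 6.7. (1)}) and Lemma \ref{improveCorollary 6.7. (1)}, applied to $f$ with a parameter $c$, give $g\in R^{\flat+}$ and a finite sum $g_c=\sum_i g_{c,i}(\varpi^\flat)^i$, $g_{c,i}\in S$, with $g-g_c\in(\varpi^\flat)^{1/p+c}R^{\flat+}$ and \eqref{2.1} valid at every point. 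Here Chai's global Serre--Tate theory is crucial: by Proposition \ref{geogl} and Lemma \ref{6.3.3}, over a fixed affine \'etale cover $U_1=\Spec B_1$ of an affine open of $X_o(0)$ with free Serre--Tate module, the ring $S$ is, up to perfection in the Serre--Tate variables, a power-series ring $B_1[[T_1^{1/p^\infty},\dots,T_l^{1/p^\infty}]]$ whose restriction over every $z\in U_1(k)$ recovers the realization \eqref{63} of the classical Serre--Tate coordinate system of $\cD_{\bar x}$. Thus each $g_{c,i}$ becomes a power series $\mathbf g_{c,i}$, and (passing to a subsequence so that all $\bar x_n$ lie in $U_1$, and lifting $\bar x_n$ to $z_n\in U_1(k)$) the value $g_{c,i}(\rho_{\CX(0)^\perf}(\tilde x_n))$ is the restriction of $\mathbf g_{c,i}$ at $z_n$ evaluated at the test point with coordinates $0$ on $J$ and $t^{a_{n,i}}$ off $J$.

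Finally I would invoke Corollary \ref{7.2.1}. Since $\|f(x_n^\circ)\|\to 0$, the estimates \eqref{2.1} and \eqref{modi} force $\|g_{c,i}(\rho_{\CX(0)^\perf}(\tilde x_n))\|$ to stay bounded away from $1$ for $n$ large; applied to $\mathbf g_{c,i}$, the base points $z_n$, the index set $J$ and the test points above, Corollary \ref{7.2.1} then excludes its case (1) and—via the Noetherian step in the proof of Proposition \ref{7.2.1'}—shows that for $n$ large $\mathbf g_{c,i}$ restricts over $z_n$ to the zero power series. Hence $g_c$ (and so $g^\sharp$, up to $\|\varpi\|^{1/p+c}$) vanishes on $\cD_{\bar x_n}$, and by \eqref{2.1} $\sup_{\cD_{\bar x_n}}\|f\|\le\|\varpi\|^{1/p+c}<1$, i.e.\ $f$ vanishes on the formal completion of $X_o(0)$ at $\bar x_n$. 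Running this for all $f_j$: if the $\bar x_n$ are Zariski dense in $X_o(0)$, then every $f_j$ vanishes identically on $X_o(0)$, hence $f_j\in\varpi\,\CO_X(U)$ after base change to $K^\circ$; replacing $f_j$ by $\varpi^{-1}f_j$ (still vanishing on $Z$, still tending to $0$ and nonzero at $x_n^\circ$) and iterating yields $f\in\bigcap_M\varpi^M\CO_X(U)=0$, a contradiction. Otherwise the $\bar x_n$ accumulate on a proper closed subvariety of $X_o(0)$, and working inside the formal completion of $X$ along a lift of it and intersecting with $Z$ drops the dimension, so the argument closes by induction on $\dim X$ (the case $\dim X=0$ being trivial), in the spirit of the induction in Lemma \ref{T'T}. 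The main obstacle is the third step: arranging, via Chai's global Serre--Tate theory, that the infinitely many deformation spaces $\cD_{\bar x_n}$ with varying base points $\bar x_n$ are governed by a single power-series ring, and that the resulting characteristic-$p$ data ($z_n$, $a_{n,i}\to 0$, the set $J$) match the hypotheses of Corollary \ref{7.2.1} exactly; the reduction of bounded-order coordinates and the passage from ``$f$ small on $\cD_{\bar x_n}$'' to ``$f=0$'' are the remaining delicate points, the rest being bookkeeping around the approximation lemma.
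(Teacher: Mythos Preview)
Your outline assembles the right ingredients (perfectoid lift, tilt, Scholze's approximation, Chai's global Serre--Tate coordinates, Corollary~\ref{7.2.1}), and up through the choice of $\tilde x_n$ and the description of its tilt via Lemma~\ref{6.2.3} it tracks the paper closely. But the endgame has a real gap, and two of your reduction steps are not as free as you suggest.

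\textbf{The main gap.} Corollary~\ref{7.2.1} does \emph{not} say that for $n$ large the restricted power series $\mathbf g_{c,i}|_{z_n}$ is identically zero; its case~(2) only gives $\mathbf g_{c,i}|_{z_{n_m}}(x_{n_m})=0$ for an infinite subsequence. (The Noetherian step inside the proof of Proposition~\ref{7.2.1'} shows the coefficients in $B$ vanish only in the $l=1$ case; in general it only kills the leading homogeneous pieces $S_h$ \emph{at the test points}.) So your conclusion ``$g_c$ vanishes on $\cD_{\bar x_n}$'' and the subsequent ``$f$ vanishes on the formal completion at $\bar x_n$, hence divide by $\varpi$ and iterate'' is unsupported. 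The paper avoids this entirely: it never claims vanishing on whole disks. Instead it raises $g_c$ to the $p^{n(c)}$-th power so that $G_{c,i}:=g_{c,i}^{p^{n(c)}}\in S_0$, uses Noetherianness of $S_0$ (not of the power-series ring) to find $M$ with $\sum_c I_c=\sum_{c\le M}I_c$, and then applies Corollary~\ref{7.2.1} only finitely many times to get $G_{c,i}(\pi_0(\rho(\tilde x_{n_m})))=0$ for all $c$ simultaneously on a single subsequence. From this, \eqref{2.1} and \eqref{modi} give $|f(\tilde x_{n_m})|\le\|\varpi\|^{1/p+c}$ for every $c$, hence $f(\tilde x_{n_m})=0$, contradicting $x_{n_m}\notin Z$. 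No ``divide by $\varpi$'' descent and no Zariski-density dichotomy on the $\bar x_n$ is needed.

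\textbf{Two reductions that need more.} First, ``apply a power of $\Fr^\can$ to lower the bounded orders'' is not free: $\Fr^\can$ may send $x_n^\circ$ into $\Fr^\can(\fZ(K^\circ))$ without $x_n^\circ\in\fZ(K^\circ)$. The paper handles this by doing the whole argument by induction on $\dim\fZ$ (not $\dim X$): if an infinite subsequence lands in $\Fr^\can(\fZ)$, one passes to the lower-dimensional $\fZ\cap\fZ'$ via Lemmas~\ref{intersect} and~\ref{union}. Your induction on $\dim X$ does not supply this. Second, ``after a basis change making $x_n$ a $\mu$-generator'' conflicts with the global nature of Chai's coordinates: the basis $\xi_1,\dots,\xi_l$ is fixed once on $U_1$ and cannot be adjusted point by point. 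The paper instead first reduces $Z$ to being defined over $F\subset L^{\cycl}$ (by the Galois-product trick), and then replaces each $x_n$ by a $\Gal(L^{\cycl}/F)$-conjugate to force the $\mu$-generator condition with respect to the \emph{fixed} global basis.
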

Here the distance function $d(x,Z)$ is defined as in  \ref{The distance function} using the integral  model $X$
\begin{proof}
 We prove Theorem \ref{TVcor1'} when $X$ is a single Siegel moduli space. The general case is proved in the same way or by embedding a product of Siegel moduli spaces into a bigger one.
We continue to use the notations  in Section \ref{The ordinary perfectoid Siegel space and Serre-Tate theory}. In particular,   the 
fields $L, L^\cyc$, $K$ and $K^\flat$ below are as in the beginning of Section \ref{The ordinary perfectoid Siegel space and Serre-Tate theory};
   the formal  scheme $\fX(0)$, the   adic   locus $\CX(0)$, the perfectoid spaces $\CX(0)^\perf,\CX(0)^{\perf\flat}$ and Frobenius morphism $\Fr^\can$
   below are as in \ref{Ordinary perfectoid Siegel space}.
For an     ordinary CM point   $x \in X(L^\cyc)$,    we the same notation $x$ to denote
   its base change in $X(K)$.      
     Let    $x^\circ$ be the unique $K^\circ$-point in $ X$ whose generic fiber is $x$.

      Suppose that $Z$ is defined over a finite  Galois extension $F $ of  $L$. Let $\CI$ be the ideal sheaf of the schematic closure of $Z$ in $X_{ F^\circ}$.
   Let $\CU$ be an affine open subscheme of $X_{W}$,  of finite type over $W$.  (This is the only use of a calligraphic font not representing an adic space in this paper.)
 We only need to find a constant $c$ such that,  if  an 
     ordinary CM point   $x\in X(L^\cyc)$  satisfies $x^\circ\in \CU (K^\circ)$ and $d_{\CU_{K^\circ}}(x_{K},I)<c$, then $x\in Z$.
 Here the distance function is as in   \ref{The distance function}.

We at first have the following simplification on $F$.  Let $K'=FK$.  Suppose $\CI (\CU_{F^\circ})$ is generated by   $f_i$, $i=1,...,n$.
 For $\sigma\in G:=\Gal(K'/K) $, 
    $f_i^\sigma$  is in the coordinate ring of $\CU_{K'^\circ}$  and 
  $\|f_i(x_{K'} )\| =\|f_i^\sigma(x_{K'} )\| $.
Let $I$ be the ideal of the coordinate ring of $\CU_{K^\circ}$ generated by $\prod_{\sigma\in G} f_i^\sigma$, $i=1,...,n$. Then  $$d_{\CU_{K^\circ}}(x_{K},I)= d_{\CU_{K'^\circ}}(x_{K'},\CI_{K'^\circ}(\CU_{K'^\circ}))^{|G|}.$$   Thus  we may assume that $F\subset K$. Equivalently, $F\subset L^\cycl$.

  Now we reduce Theorem \ref{TVcor1'}  to  Theorem \ref{TVcor1aff} below, which is formulated with affine formal schemes.
  For an      ordinary CM point $x\in X(K)$, we  also use $x$ to denote the corresponding point   $  \CX(0)(K,K^\circ)$.  
Let $\fU$  be the  restriction of the $\varpi$-adic  formal completion   of $\CU$  to  $\fX(0)$. 
 By Lemma \ref{distep},  Theorem \ref{TVcor1'} is deduced from  Theorem \ref{TVcor1aff}. \end{proof}

 \begin{thm} \label{TVcor1aff}   Let $\fZ$ be an irreducible  closed formal subscheme of $\fU_{F^\circ}$. For  a  sequence   $\{x_n\}_{n=1}^{\infty}$  of ordinary CM points  such that $x_n$ is in the $\ep_n$-neighborhood of $\fZ$  and 
with $ \|\ep_n\|\to 0$,  we have  $x_n\in \CZ$ for  infinitely many $n$'s.
\end{thm} 
 
The proof of Theorem \ref{TVcor1aff}  consists of two bulks: one involvs
   perfectoid  spaces and one does not.  The perfectoid  one is    more technical and proves results to be used in the second one.  The non-perfectoid one   concludes Theorem \ref{TVcor1aff}.  We will present the on-perfectoid one     first,  in \ref{Global  Serre-Tate  coordinate} and  \ref{Proof of Theorem {TVcor1aff}}.
     
   A  canonical lifting  is an ordinary CM points of order  1 w.r.t.  a (equivalently every) basis, see Definition \ref{generator} (1). The following lemma will be proved in Theorem \ref{samel} using    perfectoid  spaces.

       \begin{lem}\label{samellem}
      Theorem \ref{TVcor1aff} holds  if we replace ``ordinary CM points" by ``canonical liftings".
\end{lem} 
   \subsubsection{Global  Serre-Tate  coordinate} \label{Global  Serre-Tate  coordinate} 
 Before we proceed to  the  proof of Theorem \ref{TVcor1aff}, let us recall  the realization of  the global  Serre-Tate  coordinate system  in \ref{GSTapp}.    

 Let   $U_0$ be  the special fiber of $\fU $. 
 Let $U=\Spec B$ be  
the profinite Galois cover of $U_0$ defined in \ref{GSTapp} (and coming from the infinite level Igusa scheme) 
such that  
$\Sym^2(T_p A^{\univ}[p^\infty]^\et) (U)$ is a free $\BZ_p$-modules of rank $ g(g+1)/2$.  Let  $\Delta$ be the diagonal of $U\times U$.
Then by Lemma \ref{6.3.3},  for  a basis   \begin{equation} \xi_1 ,..., \xi_{g(g+1)/2}  \label{base0}\end{equation}   of $\Sym^2(T_p A^{\univ}[p^\infty]^\et) (U)$, we have the  realization of the global  Serre-Tate coordinate system 
 $$\CO\left(\widehat{U\times U}_{/\Delta }\right)= B[[T_1^\ST,...,T_{g(g+1)/2}^\ST]],$$
 which has the following property. 
For every $z\in U(k)$, we have an isomorphism \begin{equation}\pr_1^{-1}(\{z\})\overset{\pr_2}\cong    \widehat{  U}_{/z} \label{633'}
\end{equation} as in \eqref{633}, and   the  corresponding  isomorphism 
\begin{equation} \widehat{  U}_{/z}\cong  \Spf k[[T_{1,z}^\ST,...,T_{g(g+1)/2,z}^\ST]],\label{7.2}\end{equation}
 Let  $T_{i,z}^\ST$ be the restriction  of   $T_i^\ST$ to $ \pr_1^{-1}(\{z\})\cong \widehat{  U}_{/z}$. 
Let  \begin{equation}\xi_{z,1},...,\xi_{z,g(g+1)/2}  \label{base}\end{equation}  be the restriction   of $ \xi_1 ,..., \xi_{g(g+1)/2} $. 
Then  \eqref{7.2}
 coincides with the realization of the classical Serre-Tate coordinate system of 
$ \widehat{  U}_{/z } $ at $\xi_{z,1},...,$ $\xi_{z,g(g+1)/2} $, see Definition \ref{realization'}.
  
   \subsubsection{Proof of Theorem \ref{TVcor1aff}} \label{Proof of Theorem {TVcor1aff}}

After passing to an infinite subsequence, we may assume that  $\{\red( x_n)\}_{n=1}^\infty$ is a sequence of the same point or pairwisely different points.
Let    $z_n\in U(k)$ be  over   $\red( x_n)\in U_0(k)$. 
By Corollary \ref{profirred}   and 
Lemma  \ref{profirredlem}, after passing to an infinite subsequence, we may assume the following.
\begin{asmp}\label{star}
For every 
  infinite subset  $\BN'\subset \BN$, 
   the Zariski closure  of the set
   $\{z_n: n\in \BN'\} $ in $U$ is the Zariski closure  of the set
   $\{z_n: n\in \BN\} $.   \end{asmp}

We regarded the  basis       \eqref{base} for $z=z_n$ as a basis of    $\Sym^2(T_p A_{x_n})$  naturally.
 Let $x_n$ be   of  order $p^{a_n}$   w.r.t.   \eqref{base}   (see Definition \ref{generator} (1))
where   $a_n=(a_n^{(1)},...,a_n^{(g(g+1)/2)})\in\BZ_{\geq 0}^{g(g+1)/2}$. 
 After passing to an infinite subsequence and permuting the basis  \eqref{base0}  of $\Sym^2(T_p A^{\univ}[p^\infty]^\et) (U)$, we  may assume that every $a_n$ is non-increasing  (see Definition \ref{generator} (2)). Let  $l\leq g(g+1)/2$ be a non-negative integer such that for every $n$, if  $i>l$, then $a_n^{(i)}=0$. 
  For example, if $l=g(g+1)/2$, the assumption   automatically holds;  if $l=0$, we are in the situation of Lemma \ref{samel1}.

 We will reduce Theorem \ref{TVcor1aff} to the case $l=0$ by using Lemma \ref{samel1} below. 
 We  need   the ``upper triangular change of  variables"  argument following \cite{Ser}.  
 By ``upper triangular change of  variables", we indeed mean changing the first $l$-element of the basis  \eqref{base0}  of $\Sym^2(T_p A^{\univ}[p^\infty]^\et) (U)$ via an upper triangular matrix as follows. 
For  $C\in \GL_l(\BZ_p)$, $(\xi_1 ,..., \xi_{l})C$ combined with $(\xi_{l+1},. . .,\xi_{(g(g+1)/2)})$ gives a new basis  of $\Sym^2(T_p A^{\univ}[p^\infty]^\et) (U)$. Thus by restriction as in \eqref{base}, we have a new basis of $\Sym^2(T_p A_{x_n})$ for every $n$.  
 Let $x_n$ be   of  order $p^{a_n(C)}$   w.r.t.  this new basis, where $a_n(C)\in\BZ_{\geq 0}^{g(g+1)/2}$. 
Then for  $C$   upper triangular, $a_n(C)$ is still non-increasing.
 
  \begin{lem}\label{samel1}
  Assume Assumption \ref{star}.
Assume that for every upper triangular matrix $C\in \GL_l(\BZ_p)$, the $l$-th component (so the $i$-th component for $i=1,. . .,l$ as well) of $a_n(C)$   goes to $\infty$ as $n\to \infty$.
Then    $ x_n\in \CZ$ for all $n\in \BN$.

 \end{lem}
 We postpone the proof of  Lemma \ref{samel1}.
 
We  finish the proof of Theorem \ref{TVcor1aff} by  induction on
  the  dimension   of $\fZ$. 
 If $\fZ$ is  empty, 
define its dimension to be $-1$.  When $\fZ$ is of dimension $-1$, the theorem is  trivial. 
The induction hypothesis is that the theorem holds for lower dimensions, and it will only be used in the proof of Lemma \ref{claim1} (2) below. 

By Lemma \ref{samel1} and passing to an infinite subsequence, we may assume that   for an upper triangular matrix $C\in \GL_l(\BZ_p)$, the $l$-th component of $a_n(C)$ is bounded. Replacing the basis \eqref{base0} by the new basis that is 
 $(\xi_1 ,..., \xi_{l})C$ combined with $(\xi_{l+1},. . .,\xi_{(g(g+1)/2)})$, we may assume that  there is a non-negative integer $m$  such that for every $n$,   $a_n^{(l)}\leq p^m$.  The fact that   $a_n^{(i)}=0$  for  $i>l$ does not change.

\begin{lem} \label{claim1}Let $m$ be a non-negative integer. 
Then the following hold.
 
(1) The adic generic fiber of  $(\Fr^\can)^m (x_n^\circ)$ is   in the $\ep_n$-neighborhood of the scheme theoretic image $(\Fr^\can)^m(\fZ)$ (see  \cite[2.3]{Kap}).

(2)  Assume that    $(\Fr^\can)^m (x_n^\circ)  \in (\Fr^\can)^m(\fZ(K^\circ))$
for infinitely many $n$'s, then    $x_n^\circ \in\fZ(K^\circ) $ for infinitely many $n$'s.
\end{lem}

\begin{proof}
To lighten the notations, assume that $m=1$. 

Consider  the closed  formal subscheme $(\Fr^\can)^{-1}(\Fr^\can(\fZ))$  of $\fU$ which contains $\fZ$. 
Then $x_n^\circ$ is contained in the $\ep_n$-neighborhood of  $(\Fr^\can)^{-1}(\Fr^\can(\fZ))$ 
by Lemma \ref{union} (1).
Then (1) follows from   the analog of Lemma \ref{pullback1} for formal schemes (which directly follows from Definition  \ref{defnnbhd}).

For (2), we prove it by contradiction. Let  $\{n_i\}\subset \BN$ be an infinite subsequence such that $\Fr^\can(x_{n_i}^\circ)  \in \Fr^\can(\fZ(K^\circ))$
and 
  $x_{n_i}^\circ\not\in \fZ(K^\circ)$ for $n_i$ large enough.  
In particular, $(\Fr^\can)^{-1}(\Fr^\can(\fZ))\neq\fZ $.   Thus by \cite[Proposition 2.10]{Kap}, it is not hard to show that
$$(\Fr^\can)^{-1}(\Fr^\can(\fZ))=\fZ\bigcup  \fZ'$$  such that   $\fZ' $ does not contain $\fZ$   and $x_{n_i}^\circ \in  \fZ'(K^\circ) $.
By Lemma \ref{intersect}, every $x_{n_i} $ is contained in the $\ep_{n_i}$-neighborhood of $\fZ\bigcap  \fZ'$.
Let  $\fZ_1$ be the union of irreducible components  of $\fZ\bigcap  \fZ'$  which  dominate 
$\Spf F^\circ$.
By Lemma \ref{union} (2), there exists $\delta\in K^{\circ}-\{0\}$ such that every $x_{n_i} $ is contained
 in the $\ep_{n_i}/\delta$-neighborhood of $\fZ_1$.  
Since every irreducible component of $\fZ_1$ has  dimension   less than the dimension of $\fZ$,
by the  induction hypothesis, we have  $x_{n_i} \in \fZ_1(K^\circ)\subset \fZ(K^\circ)$. This is a contradiction. 
 \end{proof}   
           
          By  \eqref{Qp} and Lemma \ref{claim1},   after passing to an infinite subsequence, we may assume that for every $n$, if $i\geq l$, then
           $a_n^{(i)}=0$, i.e. we may replace $l$ by $l-1$.           Continue this process, we may assume that $l=0$, i.e. $a_n^{(i)}=0$ for every  $n$ and $i$.  
          Now  Theorem \ref{TVcor1aff} follows from Lemma \ref{samel1}.

  \subsubsection{Canonical liftings and perfectoid  strategy}\label{Canonical liftings and perfectoid  strategy}   
  Now our remaining tasks are: proof of  Lemma \ref{samellem}  and  proof of  Lemma \ref{samel1}.
  For Lemma \ref{samellem}, we prove an ``almost effective" version of Theorem \ref{TVcor1aff} for canonical liftings.  In the proof, we use the ordinary perfectoid Siegel space and Scholze's approximation lemma, following a strategy of Xie \cite{Xie}. Our later proof of  Lemma \ref{samel1} involves a more complicated version of  this proof (which in particular uses the global Serre-Tate coordinate).
  
       Let  $\CX $  be the restriction of $\CX(0)^\perf$ to  the adic generic fiber of   $\fU_{K^\circ}$.  
Then $\CX=\Spa(R,R^+)$ where $(R,R^+)$ is a perfectod affinoid  $(K,K^\circ)$-algebra (there is no need to specify $R$ though it is easy to do so).
  The restriction of $\CX(0)^{\perf,\flat}$ to the adic generic fiber  
of $U_0\otimes K^\fcc$ is $\CX^\flat=\Spa(R^\flat,R^\fpl)$, the tilt of $\CX$. More concretely, it is given as follows: 
   let $S_m$ be the coordinate ring of 
 $(\Fr^m)^{-1}(U_0)$ with the natural inclusion $S_{m-1}\incl S_m$, and  $S=\bigcup_m S_m$, then $ R^{\flat+}$ is the $\varpi^\flat$-adic completion of $S \otimes K^{\flat\circ}$. Let  $\CX_m $ be the adic generic fiber  of $\Spec S_m\otimes K^\fcc$,   and $$\pi_m:\CX^\flat\to \CX_m$$   the natural projection.  Recall $\pi$ and $\pi'$ as defined in \eqref{natpi}. 
 Then $\pi_0=\pi'|_{\CX^\flat}$ (which has image in $\CX_0$). We abbreviate $\pi|_{\CX}$ as $\pi$  (which has image in the adic generic fiber of   $\fU_{K^\circ}$.)
 Let $\rho$ be the restriction of $\rho_{\CX(0)^\perf}$
(see  
\eqref{natrho}) to $\CX$.

For  $f\in \CO(\fU)$   in the defining ideal of $\fZ$,   regard $f$ as an element of $ R^+$ by the inclusion $\CO(\fU) \subset R^+$.
  For $c\in \BZ _{> 0}$, choose $g$ as in Lemma \ref{Corollary 6.7. (1)} (w.r.t.  $f$)  and choose a finite sum  $$ g_c=\sum\limits_{\substack{i\in \BZ[\frac{1}{p}]_{\geq 0},\\i< \frac{1}{p}+c}}g_{c,i}   \cdot (\varpi^\flat)^i $$ as in Lemma \ref{improveCorollary 6.7. (1)} where $g_{c,i}\in S$ for all $i$.
        There  exists a positive integer  $m(c)$ such that $g_{c,i}\in S_{m(c)}$ for all $i$ by the finiteness of the sum.       Let  $G_c:=g_c^{p^{m(c)}} $.
Then   we have the finite sum \begin{equation}G_c=\sum\limits_{\substack{i\in \BZ[\frac{1}{p}]_{\geq 0},\\i<  \frac{1}{p}+c}}G_{c,i}   \cdot (\varpi^\flat)^{p^{m(c)}i},\label{icmicm}
\end{equation}where $G_{c,i}=g_{c,i}^{p^{m(c)}}$.
By the construction of $S_n$'s,
we have $G_{c,i}\in S_0.$
 Let $I_c$ be the ideal of $S_0$ generated by $\{G_{c,i}:i\in  \BZ[\frac{1}{p}]_{\geq 0},i<  \frac{1}{p}+c\} $. 
By the noetherianness of $S_0$, there exists a positive integer $M$ such that \begin{equation}\sum_{c=1}^\infty I_c=\sum_{c=1}^MI_c.\label{icm'}\end{equation} 

  For  $y \in \fX(0)$ and $\tilde y \in \pi^{-1}(y )\subset \CX  $,   $|f(\tilde y)|=\|f( y)\|$.
  If $\|f( y)\|\leq \|\varpi\|^{\frac{1}{p}+M}$, by \eqref{2.1} and \eqref{modi}, we have $|g_c(  \pi_{m(c)} (\rho(\tilde y)))|\leq \|\varpi\|^{\frac{1}{p}+c}$ for $c=1,...,M$. 
So  for $c=1,...,M$, we have \begin{equation}|G_c(  \pi_{0} (\rho(\tilde y)))|=|G_c(  \pi_{m(c)} (\rho(\tilde y)))|\leq \|\varpi\|^{(\frac{1}{p}+c)p^{m(c)}}\label{icm''}.
\end{equation}

  \begin{thm}\label{samel}
Assume that  $\{f_1,...,f_r\}\subset \CO(\fU)$ generates the ideal defining $\fZ$. For each $f_j$, let $M_j$ be the $M$ as in \eqref{icm'} with $f$ replaced by $f_j$. Let $\BM=\max \{M_j:j=1,...,t\}$.
Let 
$ y$ be a  canonical lifting in the $\varpi ^{\frac{1}{p}+\BM}$-neighborhood of $\fZ$. Then $y\in \CZ$.
   
\end{thm} 
 \begin{proof} Apply Lemma \ref{6.2.3} (2) to $y$ with $a=1$.    Choose $\tilde y\in \pi^{-1}(y)   $ to be  as in Lemma \ref{6.2.3} (2).  
Then $\pi_{0} (\rho(\tilde y))=\red(y)\in U_0(k)$, where we understand $U_0(k)$ as a subset of $\fX(K^\flat,K^{\flat\circ})$ naturally. 
 Let $f=f_j$ and $M=M_j$ for some $j$. Then
  $ |f(\tilde y)|\leq \|\varpi\|^{\frac{1}{p}+M}$ and thus we have \eqref{icm''}.
    Similar to Lemma \ref{g=0},  by  \eqref{icm''} and  \eqref{icmicm},  we have $G_{c,i}( \pi_{0} (\rho(\tilde y)))=0$ for every $c=1,...,M$ and  corresponding   $i$'s.  
    By \eqref{icm'}, $I_c(  \pi_{0} (\rho(\tilde y)))=\{0\}$ for every  $c\in \BZ _{>0}$.  
So $$G_c(  \pi_{m(c)} (\rho(\tilde y)))=G_c(  \pi_{0} (\rho(\tilde y)))=0$$ for every  $c\in \BZ _{>0}$.
 Thus  $g_c(  \pi_{m(c)} (\rho(\tilde y)))=0$.   
By  \eqref{2.1} and \eqref{modi},     $ |f(\tilde y)|\leq \|\varpi\|^{\frac{1}{p}+c}$ for every  $c\in \BZ _{>0}$. Thus $ |f(\tilde y)|=0$. 
     \end{proof}
 
\begin{rmk}The effectivity of $\BM$ is essentially determined by the effectivity of the determination of the approximating function $g$ in Lemma \ref{Corollary 6.7. (1)}. However, Scholze's proof of lemma \ref{Corollary 6.7. (1)} uses ``almost ring theory" and is not effective. It is meaningful to ask if Lemma \ref{Corollary 6.7. (1)} can be made effective.
\end{rmk}
                    
  \subsubsection{Toward the proof of  Lemma \ref{samel1}} \label{Toward the proof}

  This paragraph closely mimics the proof of Theorem \ref{samel}.
Let notations be as above  Theorem \ref{samel} and let $y=x_n$.
For every $c=1,...,M$ and a corresponding   $i$, 
we want  to show that  $G_{c,i}( \pi_{0} (\rho(\tilde x_{n})))=0$.
Then by \eqref{icm'}, $I_c(  \pi_{0} (\rho(\tilde x_{n})))=\{0\}$ for every  $c\in \BZ _{>0}$.  
So $G_c(  \pi_{m(c)} (\rho(\tilde x_{n})))=G_c(  \pi_{0} (\rho(\tilde x_{n})))=0$ for every  $c\in \BZ _{>0}$.
 Thus  $g_c(  \pi_{m(c)} (\rho(\tilde x_{n})))=0$.   
By  \eqref{2.1} and \eqref{modi},     $ |f(\tilde x_{n})|\leq \|\varpi\|^{\frac{1}{p}+c}$ for every  $c\in \BZ _{>0}$. Thus $ |f(\tilde x_{n})|=0$. 
Let $f$ run over a finite set of generators of  the defining ideal of $\fZ$ and choose infinite subsequences successively, we have $ x_n\in \CZ$ for   infinitely many $n$'s. 

\subsubsection{Spaces}

 For $x\in U_0(k)$ (resp. $   U(k)$), let $\cD_x$ 
 be the adic generic fiber of the    formal completion of $   U_0 \otimes K^\fcc$ (resp. $   U \otimes K^\fcc$) at $x$. (This coincides with the definition in  \ref
 {Tilting canonical liftings}.) Equivalently, $\cD_x$ is the adic generic fiber of the    formal completion of $   \widehat {U_0}_{/x} \otimes K^\fcc$ (resp. $   \widehat {U}_{/x} \otimes K^\fcc$).
The following two diagrams   summarize  the adic spaces/k-schemes and morphisms between them that  we use:
   \begin{equation}  
    \xymatrix{
   	\CX   \ar[r]^{\rho } \ar[d]^{\pi}  & \CX^\flat  \ar[d]^{\pi_0} & & & \\
\CX(0)    &     \CX_0& \ar[l]_{(1)  \ \ \ \ \ } \coprod_{x\in U_0(k)}\cD_x& \ar[l] _{\ \ (2)}\coprod_{z\in U(k)}\cD_z &    \\
   &   U_0& \ar[l]_{(1')  \ \ \ \ \ } \coprod_{x\in U_0(k)}\widehat{U_0}_{/x}& \ar[l] _{\ \ (2')}\coprod_{z\in U(k)}\widehat{U}_{/z} &\ar[l] _{\eqref{633'}\ \ \ }\coprod_{z\in U(k)}\pr_1^{-1}(\{z\})
   \ar[r]^{\ \ \  \ \ \  (3)} &   \widehat{U\times U}_{/\Delta } \\ 
   }  \label{faca1}
\end{equation}
 Here the morphisms (1) (1') (3) are the natural inclusions. And the morphism (2), when restricted to $\cD_z, z\in U(k)$, is the natural isomorphism  $\cD_{z}\cong\cD_x$  where
 $x\in U_0(k)$ is the image of $z$. We have  the parallel statement for (2').
 
\subsubsection{Functions}


 Let $H_{c,i}$ be the image of $G_{c,i}$ in $B$ under  the morphism $S_0=\CO(U_0)\to B =\CO(U)$,  and $H_{c,i,z_{n}}\in  \CO(\widehat{  U}_{/z_n}) $   the image of  $H_{c,i}$ under the morphism $B=\CO(U)\to  \CO(\widehat{  U}_{/z_n}) $.  
 
   For $\tilde x_n \in \pi^{-1}(x_{n})\subset \CX$, by  Lemma \ref{6.2.3} (1), $\pi_{0} (\rho(\tilde x_n))\in \cD_{\red( x_n)}$. Let  $y_n$ be the preimage of  $\pi_{0} (\rho(\tilde x_n))$ in  $\cD_{z_n}$ via the natural isomorphism  $\cD_{z_n}\cong\cD_{\red(x_n)}$.
Then as  elements in $K^{\fcc}$, we have 
$$H_{c,i,z_{n}}(y_n)=H_{c,i}(y_n)= G_{c,i}(\pi_{0} (\rho(\tilde x_{n}))) .$$ 
\begin{lem}\label{G<1} There is a constant $h_{c,i}<1$ such that $\|H_{c,i,z_{n_m}}(y_{n_m})\|<h_{c,i}$.
\end{lem}

\begin{proof}If the lemma is not true, let $i_0$ be the smallest  $i$ appearing in  the finite sum \eqref{icmicm} such that $\|H_{c,i,z_{n_m}}(y_{n_m})\|\to 1$
for  a subsequence $\{n_m\}_{m=1}^\infty\subset \BN$.  Then  \eqref{icmicm} implies that
 $\|G_{c}(  \pi_{0} (\rho(\tilde x_{n_m})))\|   \to \|\varpi\|^{i_0 p^{m(c)}},$ which contradicts \eqref{icm''}.  
 \end{proof}

Let $\phi$ be the  composition  of
\begin{equation*}\phi:   B=\CO(U)\to B\otimes B\to  \CO\left(\widehat{U\times U}_{/\Delta }\right)=   B[[T_1^\ST,...,T_{g(g+1)/2}^\ST]] \end{equation*}
where the first  morphism is $b\mapsto 1\otimes b$. 
I.e. $\phi$ gives the projection   $\pr_2:\widehat{U\times U}_{/\Delta }$ to the second $U$.
Tracking the second diagram  of \eqref{faca1}, we have the following lemma.
\begin{lem}\label{restriction }
    The restriction of $ \phi(H_{c,i})$  to $ \pr_1^{-1}(\{z_n\})\overset{\pr_2}\cong \widehat{  U_1}_{/z_n}$ in \eqref{633'} is $H_{c,i,z_{n}}\in  \CO(\widehat{  U_1}_{/z_n}) $.
\end{lem}
\subsubsection{Proof of Lemma \ref{samel1}}
We need some notations.
For an open subset $O\subset   \BZ_p^{l-1}$, 
 let $\BN(O)\subset \BN$ be the  subsequence   such that the first $l-1$ components of a  ratio of $x_n$  w.r.t. this basis  (see Definition \ref{generator}) is in $O$.    If $l=1$, we understand $\BN(O)$ as the whole  $ \BN$ (and we will not need the case $l=0$).
For $r \in  \BZ_p^{l-1}$ and $\delta\in (0,1)$, let $\BN(r,\delta)=\BN(O(r,\delta))$ where $O(r,\delta)$ is the $p$-adic closed disc centered at $r$ of radius   $\delta$.

Now we start to prove Lemma \ref{samel1}.
By the discussion in \ref{Toward the proof}, we only need to
  prove that for every $n\in \BN$, $G_{c,i}(\pi_{0} (\rho(\tilde x_{n}))) =0$.    Let $\Spec A\subset \Spec B$ be the Zariski closure of the set
   $\{z_n: n\in \BN\} $. 
Let $f$ be   the image of $\phi(H_{c,i})$ under $B[[T_1^\ST,...,T_{g(g+1)/2}^\ST]]\to A[[T_1^\ST,...,T_{g(g+1)/2}^\ST]]$.  
By  Lemma \ref{restriction }, we have 
\begin{equation}\label{GH0}G_{c,i}(\pi_{0} (\rho(\tilde x_{n}))) =H_{c,i,z_{n}}(y_{n})=f(y_n).\end{equation}
We prove the stronger result  $f=0$ by contradiction.

 Assume that $f\neq 0$. We want to apply   Proposition \ref{7.2.1} to $f$ and $y_n$'s. 
 We check the conditions in Proposition \ref{7.2.1}. First, by  the compatibility between the 
Global  and classical Serre-Tate  coordinates as in the end of  \ref
 {Global  Serre-Tate  coordinate}, we   use  Lemma \ref{6.2.3} (2) to conclude that $y_n$'s are as in \eqref{equation} above Proposition \ref{7.2.1}. 
Second, the assumption $(\star)$ in Proposition \ref{7.2.1} holds by    Assumption \ref{star}. 
By the assumption that $a_n$   goes to $\infty$ as $n\to \infty$ in Lemma \ref{samel1},    Lemma \ref{G<1} and  the second ``=" of \eqref{GH0}, for $n$ large enough, $n$ satisfies the inequality in \eqref{fDn} of  Proposition \ref{7.2.1} (for every $D$). 
  Then by Proposition \ref{7.2.1},
   there  exists $\delta_0\in (0,1)$ such that   $\BN(0,\delta_0) $  is finite.  
For a general $r \in  \BZ_p^{l-1}$,  by    \cite[Lemma 2.7]{Ser}, after an ``upper triangular change of  variables" (as defined above Lemma \ref{samel1}), we may use the same proof for $r=0$ to conclude that   there  exists $\delta_r\in (0,1)$ such that   $\BN(r,\delta_r) $  is finite.   
By its compactness,   $ \BZ_p^{l-1}$ is the union of  
 $p$-adic closed discs centered at $r$ of radius   $\delta_r$ for finitely many $r$'s. 
Then  the infinite set $\BN$ is the union of the finite sets  $\BN(r,\delta_r) $'s for these finitely many $r$'s. This is a contradiction.


\begin{thebibliography}{99} 
     \bibitem{Bos}  Bosch, Siegfried. Lectures on formal and rigid geometry. Vol. 2105. Berlin/Heidelberg/New York: Springer, 2014.
\bibitem{BGR}Bosch, Siegfried, Ulrich G\"untzer, and Reinhold Remmert. Non-Archimedean analysis, volume 261 of Grundlehren der Mathematischen Wissenschaften [Fundamental Principles of Mathematical Sciences]. (1984).
 \bibitem{BLR}Bosch, Siegfried, Werner L\"utkebohmert, and Michel Raynaud. N\'eron models. Vol. 21. Springer Science $\&$ Business Media, 2012.

  
\bibitem{Cha} Chai, Ching-Li. Families of ordinary abelian varieties: canonical coordinates, $p$-adic monodromy. Tate-linear subvarieties and Hecke orbits (2003).

  \bibitem{Hab}Habegger, Philipp. The Tate-Voloch Conjecture in a Power of a Modular Curve. International Mathematics Research Notices 2014.12 (2013): 3303-3339.
   \bibitem{Hid} Hida, Haruzo. p-adic automorphic forms on Shimura varieties. Springer Science $\&$ Business Media, 2012.
\bibitem{Hon}Hong, Serin. On the Hodge-Newton filtration for p-divisible groups of Hodge type. Mathematische Zeitschrift (2016): 1-25.
\bibitem{Hub} Huber, Roland. Continuous valuations. Math. Z 212.3 (1993): 455-477. \bibitem{Hub93}Huber, Roland. Bewertungsspektrum und rigide Geometrie. Fakult\"at f\"ur Mathematik, 1993.
  \bibitem{Hub94} Huber, Roland. A generalization of formal schemes and rigid analytic varieties. Mathematische Zeitschrift 217.1 (1994): 513-551. 
 \bibitem{Hub96} Huber, Roland.   Etale Cohomology of Rigid Analytic Varieties and Adic Spaces.  Aspects of Mathematics 30(1996).

\bibitem{dJN}
De Jong, Johan, and Rutger Noot. Jacobians with complex multiplication. Arithmetic algebraic geometry. Birkh\"auser, Boston, MA, 1991. 177-192.
                   \bibitem{Kap}Kappen, Christian. N\'eron models of formally finite type. International Mathematics Research Notices 2013.22 (2013): 5059-5147.

\bibitem{Kat} Katz, Nicholas. Serre-Tate local moduli. Surfaces alg\'{e}briques. Springer, Berlin, Heidelberg, 1981. 138-202.

\bibitem{Oor}Oort, Frans. Canonical liftings and dense sets of CM-points. Arithmetic geometry (Cortona, 1994) 37 (1997): 228-234.
\bibitem{Pil}Pila, Jonathan. Special point problems with elliptic modular surfaces. Mathematika 60.1 (2014): 1-31.
 \bibitem{PZ} Pila, Jonathan, and Umberto Zannier. Rational points in periodic analytic sets and the Manin-Mumford conjecture. Rendiconti Lincei-Matematica e Applicazioni 19.2 (2008): 149-162.
  \bibitem{PB}  Pilloni, Vincent, and Beno\^{i}t Stroh. Cohomologie coh\'{e}rente et repr\'{e}sentations Galoisiennes. Annales math\'{e}matiques du Qu\'{e}bec (2015): 1-36.

   \bibitem{Poo} Poonen, Bjorn. Multiples of subvarieties in algebraic groups over finite fields. International Mathematics Research Notices 2005.24 (2005): 1487-1498.


\bibitem{Ray83b} Raynaud, Michel. Courbes sur une vari\'{e}t\'{e} ab\'{e}lienne et points de torsion. Inventiones mathematicae 71.1 (1983): 207-233.
\bibitem{Ray83}  Raynaud, Michel. Sous-vari\'{e}t\'{e}s d'une vari\'{e}t\'{e} ab\'{e}lienne et points de torsion. Arithmetic and geometry. Birkh\"{a}user Boston, 1983. 327-352.
\bibitem{Sca0}Scanlon, Thomas. p-adic distance from torsion points of semi-Abelian varieties. Journal fur die Reine und Angewandte Mathematik (1998): 225-225.
 \bibitem{Sca} Scanlon, Thomas. The conjecture of Tate and Voloch on p-adic proximity to torsion. International Mathematics Research Notices 1999.17 (1999): 909-914.

\bibitem{Sch12} Scholze, Peter. Perfectoid spaces. Publications math\'{e}matiques de l'IH\'{e}S 116.1 (2012): 245-313.
\bibitem{Sch13} Scholze, Peter. On torsion in the cohomology of locally symmetric varieties.  Annals of Mathematics  (2) 182 (2015), no. 3, 945-1066. 
\bibitem{SW}Scholze, Peter, and Jared Weinstein.  Moduli of p-divisible groups, Cambridge Journal of Mathematics 1 (2013), 145-237.
\bibitem{Ser}Serban, Vlad. An infinitesimal $ p $-adic multiplicative Manin-Mumford Conjecture. Journal de Th\'eorie des Nombres de Bordeaux 30.2 (2018): 393-408.
 \bibitem{ST}Serre, Jean-Pierre, and John Tate. Good reduction of abelian varieties. Annals of Mathematics (1968): 492-517.
 
\bibitem{STs} Shankar, Ananth N., and Jacob Tsimerman. Unlikely intersections in finite characteristic. Forum of Mathematics, Sigma. Vol. 6. Cambridge University Press, 2018.
\bibitem{SZ}Shankar, Ananth N., and Rong Zhou. Serre-Tate theory for Shimura varieties of Hodge type. arXiv preprint arXiv:1612.06456 (2016).
\bibitem{She} Shen, Xu. Perfectoid Shimura varieties of abelian type. International Mathematics Research Notices 2017.21 (2016): 6599-6653.

\bibitem{TV}Tate, John, and Jos\'{e} Felipe Voloch. Linear forms in p-adic roots of unity. International Mathematics Research Notices 1996.12 (1996): 589-601.
\bibitem{Wed}Wedhorn, Torsten. Ordinariness in good reductions of Shimura varieties of PEL-type. Annales scientifiques de l'Ecole normale sup\'erieure. Vol. 32. No. 5. Paris: Gauthiers-Villars,[1864]-, 1999.
	\bibitem{Xie}Xie, Junyi. Algebraic dynamics of the lifts of Frobenius. Algebra $\&$ Number Theory 12.7 (2018): 1715-1748.
 \end{thebibliography}
\end{document}